\theoremstyle{plain}
\newtheorem{theorem}{Theorem}[section]
\newtheorem*{theorem*}{Theorem}
\newtheorem{proposition}[theorem]{Proposition}
\newtheorem{corollary}[theorem]{Corollary}
\newtheorem{lemma}[theorem]{Lemma}
\theoremstyle{definition}
\newtheorem{remark}[theorem]{Remark}
\newtheorem{definition}[theorem]{Definition}
\theoremstyle{definition}
\newcommand{\R}{{\mathbb R}}
\newcommand{\Z}{\mathbb Z}
\newcommand{\N}{\mathbb N}
\renewcommand{\H}{\mathbb H}
\newcommand{\T}{\mathbb T}
\newcommand{\nil}{\varnothing}
\newcommand{\wihat}{\widehat}
\newcommand{\defn}[1]{\emph{#1}}
\newcommand{\boundary}{\partial}
\newcommand{\mc}[1]{\mathcal{#1}}
\newcommand{\ob}[1]{\overline{#1}}
\newcommand{\inter}[1]{\mathring{#1}} 
\newcommand{\width}{\operatorname{w}}
\newcommand{\tr}{\operatorname{trunk}}
\newcommand{\val}{\operatorname{val}}
\newcommand{\cpt}{\sqsubset}
\newcommand{\extent}{\operatorname{extent}}
\newcommand{\netextent}{\operatorname{net extent}}
\newcommand{\up}{\uparrow}
\newcommand{\down}{\downarrow}
\renewcommand{\b}{\mathfrak{b}}
\newcommand{\co}{\mskip0.5mu\colon\thinspace}
\title{Links, bridge number, and width trees}
\author{Qidong He and Scott A. Taylor}
\begin{document}
\maketitle

\begin{abstract}
To each link $L$ in $S^3$ we associate a collection of certain labelled directed trees, called width trees. We interpret some classical and new topological link invariants in terms of these width trees and show how the geometric structure of the width trees can bound the values of these invariants from below. We also show that each width tree is associated with a knot in $S^3$ and that if it also meets a high enough ``distance threshold'' it is, up to a certain equivalence, the unique width tree realizing the invariants.
\end{abstract}

\section{Introduction}

The bridge number $\b(L)$ of a link $L\subset S^3$ is a classical invariant with several equivalent definitions. One of best known says that $\b(L)$ can be calculated by positioning $L$ in $S^3$ so that, with respect to some height function, the only critical points on $L$ are isolated maxima and minima. The invariant $\b(L)$ is then the minimum number of maxima over all possible positions and height functions. 

\begin{figure}[ht!]
\centering
\labellist
\small\hair 2pt
\pinlabel {$4$} [l] at 57 275
\pinlabel {$1$} [l] at 38 59
\pinlabel {$6$} [l] at 253 275
\pinlabel {$4$} [l] at 277 275
\pinlabel {$12$} [bl] at 259 361
\pinlabel {$5$} [l] at  226 93
\pinlabel {$1$} [l] at 219 49
\pinlabel {$2$} [l] at 226 18
\pinlabel {$8$} [br] at 509 398
\pinlabel {$4$} [b] at  521 329
\pinlabel {$4$} [b] at 649 329
\pinlabel {$6$} [l] at 549 274
\pinlabel {$8$} [r] at  623 274
\pinlabel {$3$} [l] at 586 86
\pinlabel {$2$} [r] at 538 10
\pinlabel {$3$} [l] at 618 10
\pinlabel {$1$} [r] at 555 44
\pinlabel {$1$} [l] at 598 44
\endlabellist
\includegraphics[scale=0.5]{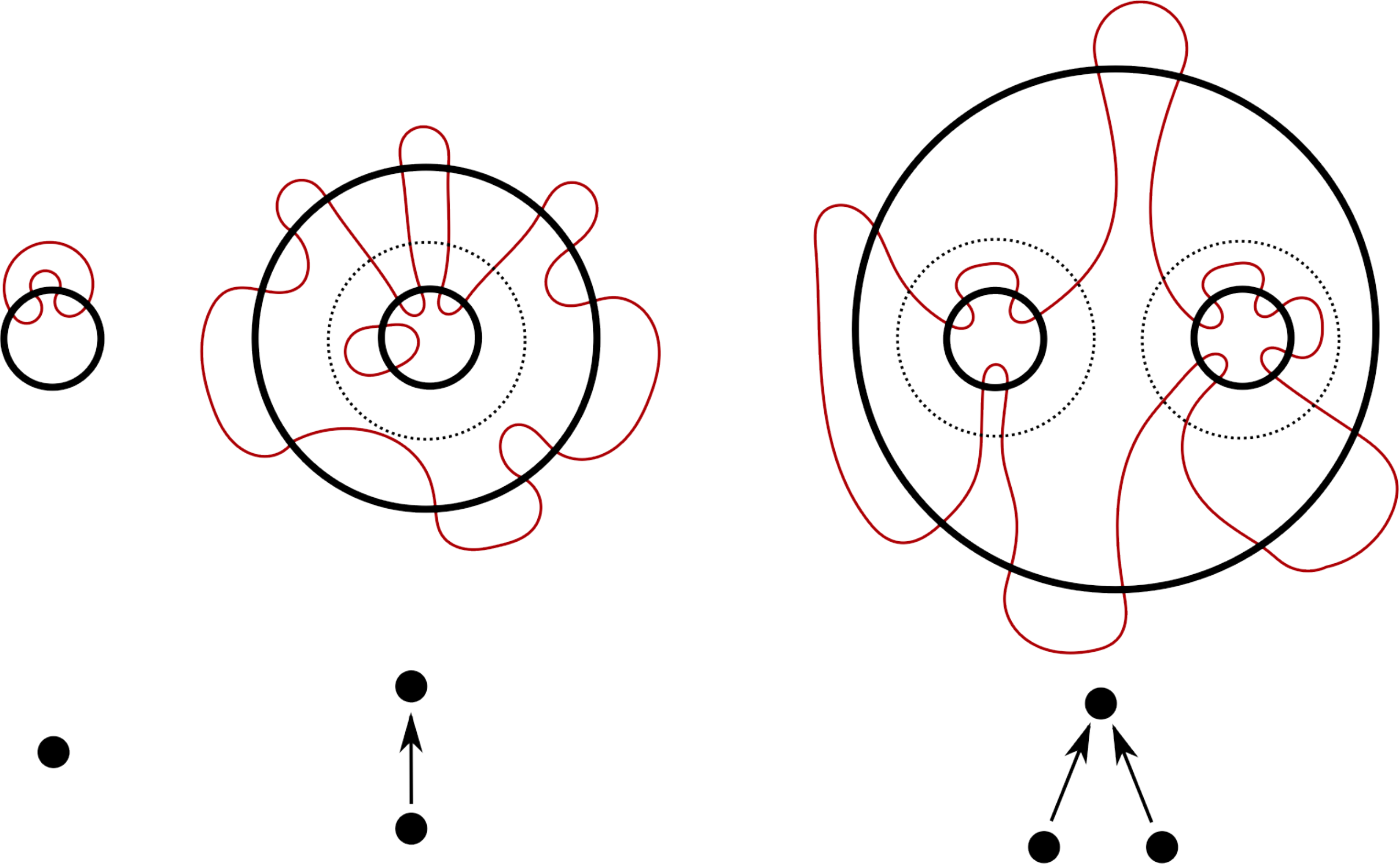}
\caption{Examples of width trees with few edges. Each diagram represents an infinite family of links by inserting braids along each sphere. The vertices and edges of the graph correspond to the spheres. We have labelled each sphere with the number of points of intersections between it and the link. We have labelled each vertex or edge by $|S \cap L|/2 - 1$ where $S$ is the sphere corresponding to the vertex or edge.}
\label{fig:smallex}
\end{figure}

All links $L$ with $\b(L) = 2$ have the form shown on the left of Figure \ref{fig:smallex}. The height function is the distance from the center of the ball shown. For that height function, all the maxima are above all the minima. The sphere $H$ that is the boundary of the ball separates the maxima from the minima and is called a bridge sphere for $L$. Notice that the number of maxima is exactly half the number of times $L$ intersects $H$. However, as in the center of Figure \ref{fig:smallex}, not all maxima need be above all minima. In that example, we have three spheres of interest. There are two ``thick spheres'' $H_1, H_2$ that seperate maxima from minima, with maxima above minima. There is one ``thin sphere'' $F$ that separates maxima from minima, with minima above maxima. The thin and thick spheres are all concentric. Notice that the number of maxima is exactly equal to the ``alternating sum'' $\frac{1}{2}\big(|H_1 \cap L| + |H_2 \cap L| - |F \cap L|\big)$. Finally, on the right of Figure \ref{fig:smallex}, observe that we can again identify thin and thick spheres, but this time, they are not concentric. This time, the height function takes values in a tree, but again the number of maxima can be calculated from the intersections with the thick and thin surfaces.

In all three instances, we can keep track of the relevant counts by considering the dual tree to the thin spheres. Each vertex corresponds to a thick sphere and each edge to a thin sphere. The height functions induce orientations on the edges of these dual trees. If a vertex or edge corresponds to a thick or thin sphere $S$, we label it with $|S \cap L|/2 - 1$. Delaying a precise definition until later, in each case we have constructed what we call a ``width tree'' for the link $L$. Figure \ref{fig:widthtree} shows another example of a width tree; it is associated to links that can be presented in the form shown in Figure \ref{fig:multicbridge}. That example differs from those in Figure \ref{fig:smallex} most significantly in the existence of what we call a ``ghost edge.'' We will say more about those later. It turns out that $\b(L)$ can be computed by minimizing the ``alternating sum'' over all possible width trees for $L$. From the set of width trees, we can compute other invariants, including Gabai's width $\width_G(L)$ and newer invariants due to Taylor--Tomova. The geometry of the width trees affects the values of these invariants. Deferring some definitions and stating our results informally, we prove the following:
\begin{itemize}
\item Associated to each even tangle $(M,\tau)$, including each knot and link in the 3--sphere $S^3$, are collections of width trees $\T(M, \tau)$ and $\T_2(M,\tau) \subset \T(M,\tau)$. Bridge number, Gabai width, and Taylor-Tomova's invariants can be computed by minimizing over elements of $\T_2(M,\tau)$ (Section \ref{mult cbridge});
\item The width trees in $\T_2(M,\tau)$ determine whether or not $(M,\tau)$ is a split or composite tangle and can also reflect the existence of essential Conway spheres (Theorem \ref{thm: slim main});
\item The geometry of the width trees in $\T(M,\tau)$ bounds bridge number, width, and their difference, from below (Theorem \ref{minflowmaxcut} and Corollary \ref{cor: difference});
\item Every positive, productless width tree lies in $\T_2(M,\tau)$ for some even tangle $(M,\tau)$ (Theorem \ref{creation});
\item If the width tree is also equipped with a ``distance threshold,'' and if the distance threshold is large enough, then there is a unique (up to equivalence) width tree realizing both bridge number and width (Corollary \ref{uniqueness});
\item Consequently, the lower bound we exhibit for bridge number is sharp (Corollary \ref{cor: sharp}).
\end{itemize}
We will explain each of the terms subsequently, but we start by reviewing the classical invariants of bridge number and Gabai width. The perspective we bring to them is then generalized to produce our width trees. Our work is influenced by and based heavily on the work of Hayashi--Shimokawa \cite{HS},  Tomova \cite{Tomova2}, Blair--Tomova \cite{BT}, Blair--Zupan \cite{BZ}, and Taylor--Tomova \cites{TT1, TT2} and is a graph theoretic reformulation of the generalized bridge positions studied by those authors. This reformulation produces additional insight into bridge number and suggests ways to construct examples showing that certain invariants are nonadditive under connected sum. It is also intended to further justify the view that Taylor and Tomova's invariants ``net extent'' and ``width'' are useful extensions of the classical invariants ``bridge number'' and ``Gabai width.''

In the remainder of this section, we elaborate on the preceding discussion with more detail. Suppose that $L \subset S^3$ is a link. A \defn{height function} for $L$ is a smooth function $h \co S^3 \to I$ (where $I = [-1,+1] \subset \R$) such that $h$ has exactly two critical points, with critical values $\pm 1$,  and $h|_L$ is Morse with critical points at distinct heights, none of which are at $\pm 1$. A \defn{bridge position} for $L$ is a height function $h \co S^3 \to I$ such that all minima are below all maxima. The \defn{bridge number} $\b(h)$ of a bridge position $h$ is defined to be the number of maxima. The \defn{bridge number} $\b(L)$ of $L$ is the minimum bridge number across all bridge positions for $L$. Schubert \cite{Schubert} proved that bridge number detects the unknot and that $\b - 1$ is additive under connected sum. 

Observe that if $h$ is a bridge position and if $t$ is a regular value of $h|_L$ separating the heights of the minima from the heights of the maxima, then $H = h^{-1}(t)$ is a sphere transverse to $L$ such that $|H \cap L|/2 = \b(h)$. The sphere $H$ separates $S^3$ into two 3-balls, each containing $\b(h)$ arcs. The link $L$ intersects each 3-ball in arcs that are isotopic, relative to their endpoints, into $H$; that is each 3-ball along with the arcs it contains is a trivial tangle. Any sphere separating $(S^3, L)$ into two trivial tangles is called a \defn{bridge sphere}; the arcs on either side are \defn{bridge arcs}. Given a bridge sphere $H$, it is possible to construct a bridge position $h$ such that $\b(h) = |H \cap L|/2$. In summary, the bridge number of a link can be calculated either by minimizing across certain height functions or by minimizing across bridge spheres. Each bridge sphere $H$ will correspond to a width tree having a single vertex. The label on the vertex will be $|H\cap L|/2  - 1$. In general, for any compact surface $S$ transverse to a 1--manifold $L$, we define the \defn{extent} of $S$ to be $\extent(S) = (-\chi(S) + |S \cap L|)/2$. (The Euler characteristic of $S$ is $\chi(S)$.)

Here is another way to calculate bridge number. Let $h\co S^3 \to I$ be \emph{any} height function for $L$. Let $t_0 < t_1 < \cdots < t_n$ be regular values of $h|_L$ such that:
\begin{itemize}
\item There is at least one critical value of $h|_L$ below $t_0$ and all such critical values correspond only to minima;
\item There is at least one critical value of $h|_L$ above $t_n$ and all such critical values correspond to maxima;
\item For each $i \in \{0, \hdots, n-1\}$, there is at least one critical value of $h|_L$ in $(t_i, t_{i+1})$ and either all critical values in the interval correspond to minima or all critical values in the interval correspond to maxima.
\end{itemize}
For each $i$, $h^{-1}(t_i)$ is a sphere transverse to $L$. These spheres come equipped with a transverse orientation arising from the standard orientation of $I$. The tangles consisting of $h^{-1}([-1,t_0])$ and $h^{-1}([t_n,1])$ and their intersections with $L$ are trivial tangles. For $i < n$, the submanifolds $h^{-1}([t_i, t_{i+1}])$ are homeomorphic to $S^2 \times I$. According to the definitions we give subsequently, their intersections with $L$ are also trivial tangles. If $i$ is even, then the sphere $h^{-1}(t_i)$ is a \defn{thick sphere}; if $i$ is odd, it is a \defn{thin sphere}. Let $\mc{H}^+$ be the union of the thick spheres and $\mc{H}^-$ the union of the thin spheres. Let $\mc{H} = \mc{H}^+ \cup \mc{H}^-$. We must have $\mc{H}^+ \neq \nil$. The height function is a bridge position if and only if $\mc{H}^-$ is empty. Thick spheres have bridge arcs and vertical arcs  (i.e. arcs isotopic to $I$-fibers in $S^2 \times I$)  incident to them, while thin spheres have only vertical arcs incident to them. Once we define ``width tree,'' it will be evident that the spheres $\mc{H}$ also give rise to a width tree; in this case a directed path. Each thick sphere is a vertex and there is a directed edge between two thick spheres if their heights are adjacent to the height of a thin sphere separating them. The direction on the edge is induced by the transverse orientation of the thin sphere. The labels on the vertices and edges are the extents of the corresponding spheres.

The surface $\mc{H}$ is an example of a \emph{multiple bridge surface} for $(S^3, L)$; the definition is due to Hayashi-Shimokawa \cite{HS} and is given subsequently. Indeed, adapting \cite{TT1}, we generalize it further to \defn{multiple c-bridge surfaces}. In the example at hand, all of the thick and thin spheres are concentric. In general, the thick and thin spheres of a multiple c-bridge surface need not be. The \defn{net extent} of $\mc{H}$ (equivalently, of the associated width tree) is defined to be
\[
\netextent(\mc{H}) = \extent(\mc{H}^+) - \extent(\mc{H}^-) = \sum\limits_{H \cpt \mc{H}^+} \extent(H) - \sum\limits_{F \cpt \mc{H}^-} \extent(F)
\]
The notation $\cpt$ means ``is a connected component of''; so the first sum is over all thick spheres and the second over all thin spheres. A pleasant exercise is to show that $\b(L) - 1$ is also equal to the minimum of $\netextent(\mc{H})$, where the minimum is taken over \emph{all} height functions $h$ for $L$, not just bridge positions. Theorem \ref{amalg} generalizes this further to show that $\b(L) - 1$ can be calculated by minimizing the net extent over all width trees associated to $(S^3, L)$.

From our height function $h$, we can define another classical invariant: Gabai width \cite{G3}. Our definition is due to Scharlemann-Schultens \cite{SS}. Given a height function $h \co S^3 \to I$ for $L$, the \defn{Gabai width} of $h$ is
\[
2\big(\sum\limits_{H \cpt \mc{H}^+} (|H \cap L|/2)^2 - \sum\limits_{F \cpt \mc{H}^-} (|F \cap L|/2)^2\big).
\]
The \defn{Gabai width} $\width_g(L)$ of the link is the minimum of this width over all height functions $h \co S^3 \to I$ for $L$. Gabai showed that if $\mc{H}$ achieves the minimum, then the thick spheres (i.e. components of $\mc{H}^+$) have topological significance for $L$. Thompson \cite{Thompson} showed that at least one thin sphere does as well. As with bridge number, Gabai width detects the unknot. For some time, it was an open question as to whether or not Gabai width is additive under connect sum; Blair and Tomova \cite{BT} showed it is not. It was also an open question as to whether or not every thin sphere has topological significance for $L$; Blair and Zupan \cite{BZ} showed they need not have topological significance.

Recently, Taylor and Tomova \cite{TT1, TT2} made small changes to the definition of Gabai width. These changes include normalizing the puncture count by the Euler characteristic of the sphere and relaxing the condition that the thick and thin spheres need to be concentric. After those changes, both thick and thin surfaces have topological significance, width still detects the unknot, and, rather surprisingly, width is now additive under connected sum. Their definition also applies to spatial graphs in (nearly) arbitrary 3-manifolds and to thick and thin surfaces of nonzero genus. The purpose of this paper is to provide a combinatorial reinterpretation of the simplest version of their invariants. This perspective sheds additional insight on the nature of those invariants and their relationship to the classical invariants. It may also ultimately help explain the nonadditivity of Gabai width as being related to the tree structure of the width trees of factors of a composite knot.

\subsection*{Acknowledgments} We are exceptionally grateful to Charles Parham for valuable conversation, insightful observations, and penetrating questions regarding this project. Thanks also to the referee for helpful suggestions. This work was partially funded by research grants from Colby College. 

\newpage

\section{Definitions, ditrees, and multiple c-bridge surfaces}

\subsection{Width trees and their invariants}
Figure \ref{fig:widthtree} gives an example of a width tree. Figure \ref{fig:multicbridge} shows a schematic diagram for a tangle decomposition associated to the width tree of Figure \ref{fig:widthtree}.

\begin{figure}[ht!]
\centering
\labellist
\small\hair 2pt
\pinlabel {3} [b] at 152 458
\pinlabel {0} [br] at 179 461
\pinlabel {4} [b] at 224 479
\pinlabel {0} [bl] at 254 455
\pinlabel {4} [bl] at 349 481
\pinlabel {3} [tl] at 320 452
\pinlabel {6} [l] at 290 422
\pinlabel {3} [br] at 256 397
\pinlabel {2} [bl] at 311 397
\pinlabel {4} [r] at 221 365
\pinlabel {3} [l] at 344 365
\endlabellist
\includegraphics[scale=1.0]{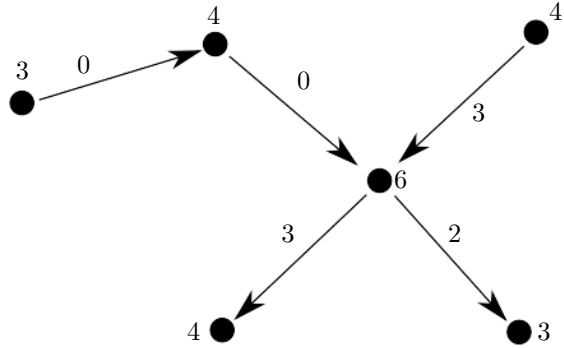}
\caption{An example of a width tree with no boundary vertices.}
\label{fig:widthtree}
\end{figure}

\begin{definition}\label{digraph term}
A directed graph whose underlying undirected graph is a tree is a \defn{ditree}.  A \defn{path} is a ditree that, ignoring directions on edges, is either a single vertex or has exactly two valence one vertices. A \defn{coherent path} is a path such that every valence two vertex in the path is the head of one edge and the tail of another. For a given ditree $T$ choose a subset of degree one vertices to be considered as \defn{boundary vertices}. (The subset may be empty, and not all leaves of $T$ need be boundary vertices.) Suppose that $\lambda$ is a function from the vertices and edges of $T$ to $\{x \in \Z : x \geq -1\}$. The pair $(T, \lambda)$ is a \defn{width tree} if the following hold:
\begin{enumerate}
\item If $v$ is a boundary vertex with incident edge $e$, then  $\lambda(v) = \lambda(e)$, and
\item For each vertex $v$, $\lambda(v)$ is at least the sum of the \emph{nonnegative} labels on the incoming edges to $v$ and is also at least the sum of the \emph{nonnegative} labels on the outgoing edges from $v$. 
\end{enumerate}

We call $\lambda$ a \defn{label function}. It is \defn{nonnegative} (resp. \defn{positive}) if all values of $\lambda$ are nonnegative (resp. positive). Two width trees $(T, \lambda)$ and $(T', \lambda')$ are \defn{equivalent} if there is a label-preserving graph isomorphism from $T'$ to $T$ which either preserves or reverses all orientations on edges. If $\lambda$ and $\lambda'$ are label functions, writing $\lambda \geq \lambda'$ means that for every vertex and edge $x$, $\lambda(x) \geq \lambda'(x)$. In particular a width tree $(T, \lambda)$ is positive if $\lambda \geq 1$.
\end{definition}

Inspired by the knot invariants ``net extent'' and ``width'' in \cite{TT2} and ``trunk'' in \cite{Ozawa}, we define the following invariants of width trees. As in \cite{dMPSS}, there are many other possibilities, but we focus on these.

\begin{definition}\label{inv. def.}
Suppose that $(T, \lambda)$ is a width tree with edge set $E$ and vertex set $V$. The \defn{net extent} is
\[
\netextent(T, \lambda) = \sum_{v \in V} \lambda(v) - \sum_{e \in E}\lambda(e).
\]
The \defn{width} is
\[
\width(T, \lambda) = 2\big(\sum_{v \in V} \lambda(v)^2 - \sum_{e \in E} \lambda(e)^2\big) = 2\netextent(T, \lambda^2) = \netextent(T, 2\lambda^2).
\]

The \defn{trunk} of a width tree $(T, \lambda)$ is the maximum of $\lambda(v)$ over all vertices $v$ of $T$. 
\end{definition}

Despite the connection between these invariants of width trees and link invariants, width trees in general are not particularly useful in illuminating the topology of a link. To rectify this, we introduce some additional structure. Its usefulness will become clear in subsequent sections.

\begin{definition}\label{def: slim}
Suppose that $(T, \lambda)$ is a width tree.  An edge $e$ of $T$ with an endpoint at a nonboundary vertex $v$ is a \defn{product edge} at $v$ if it is either the sole incoming edge to $v$ or the sole outgoing edge to $v$ and if $\lambda(e) = \lambda(v)$. The width tree $(T, \lambda)$ is \defn{productless} if it has no product edges. A \defn{distance threshold} for $(T, \lambda)$ is a fixed nonnegative integer $\delta = \delta(T, \lambda)$. A nonnegative, productless width tree with distance threshold $\delta \geq 2$ is called a \defn{slim width tree}.
\end{definition}

\subsection{Nearly standard definitions}
The 3-ball is denoted $B^3$; the $n$-sphere $S^n$. We let $I = [-1,1]$. All surfaces in this paper are orientable. A \defn{tangle} $(M, \tau)$ (for our purposes) consists of a properly embedded 1--manifold $\tau$ in a 3-manifold $M$ that is the result of removing a finite number (possibly zero) of open 3-balls from $S^3$. If $S \subset M$ is a properly embedded surface transverse to $\tau$, we write $S \subset (M, \tau)$. The \defn{punctures} of $S$ are the points $S \cap \tau$. We let $(M, \tau) \setminus S$ denote the result of cutting both $M$ and $\tau$ open along $S$. Similarly, we let $M \setminus \tau$ denote the result of removing an open regular neighborhood of $\tau$ from $M$. If $M'$ is a component of $M\setminus S$ and if $\tau' = \tau \cap M'$, we write $(M', \tau') \cpt (M,\tau)\setminus S$. 

Suppose $S, S' \subset (M, \tau)$ are either surfaces in a tangle or the boundary of the tangle.  We say that they are isotopic or \defn{$\tau$-parallel} if there is an isotopy in $M$ of $S$ to $S'$ such that at all times during the isotopy the surface is transverse to $\tau$. A simple closed curve in $S$ is \defn{inessential} if it is either isotopic in $S \setminus \tau$ to a component of $\boundary S$ (i.e. is \defn{$\boundary$-parallel}) or if it bounds an unpunctured or once-punctured disc in $S$; it is \defn{essential} otherwise. An unpunctured or once-punctured disc properly embedded in $(M, \tau)\setminus S$ with boundary in $S$ is an \defn{sc-disc} if it is not $\tau$-parallel to a disc in $S$, relative to its boundary. An sc-disc $D$ is a \defn{c-disc} if $\boundary D$ is essential in $S$. An unpunctured c-disc is a \defn{compressing disc}; a once-punctured c-disc is a \defn{cut disc}. If $D$ is an sc-disc that is not a c-disc, then $D$ is a \defn{semi-compressing disc} if it is unpunctured and a \defn{semi-cut disc} if it is once-punctured. If there is no compressing disc, then $S$ is \defn{incompressible}; if no c-disc, then $S$ is \defn{c-incompressible}.

A surface $S \subset (M, \tau)$ is \defn{$\boundary$-parallel} if it is parallel in $M \setminus \tau$ to $\boundary (M \setminus \tau)$. When $M \neq S^3$, being $\boundary$-parallel is a more general condition than being $\tau$-parallel to a component of $\boundary M$. A surface $S \subset (M, \tau)$ is \defn{essential} (resp. \defn{c-essential}) if it is incompressible (resp. \defn{c-incompressible}), not $\boundary$-parallel, not an unpunctured 2-sphere bounding a 3-ball in $M$ disjoint from $\tau$, and not a twice-punctured 2-sphere bounding a ball in $M$ whose intersection with $\tau$ is a single arc isotopic, relative to its endpoints, into $S$.  

A tangle $(M, \tau)$ is an \defn{elementary ball tangle} if $M$ is homeomorphic to $B^3$ and if $\tau$ is either empty or is a single arc isotopic relative to its endpoints into $\boundary M$. It is a \defn{product tangle} if $(M,\tau)$ is homeomorphic to $(S^2 \times I, \text{(points)} \times I)$. The tangle $(M,\tau)$ is \defn{even} if every component of $\boundary M$ has an even number of punctures.  The tangle $(M,\tau)$ is \defn{split} if it contains an essential unpunctured sphere. The tangle $(M,\tau)$ is \defn{irreducible} if it is not split and does not contain a once-punctured 2-sphere. Notice that an even, nonsplit tangle such that every component of $\boundary M$ has at least 2 punctures is necessarily irreducible. The tangle  $(M,\tau)$ is \defn{composite} if it is not an elementary ball tangle and if $(M,\tau)$ contains an essential twice-punctured 2-sphere. The tangle $(M, \tau)$ is \defn{prime} if it is neither composite nor an elementary ball tangle. A prime tangle is necessarily nonsplit, since otherwise, we could tube an inessential twice-punctured sphere to an essential unpunctured sphere to create an essential twice-punctured sphere. 

\subsection{Multiple c-bridge surfaces} \label{mult cbridge}
We turn now to the topological objects associated to width trees. There is a long intellectual history giving rise to these constructions. Gabai \cite{G3} gave the first definition of ``thin position'' for knots in $S^3$ using height functions as we described previously. Scharlemann-Thompson \cite{ST} reinterpreted the idea for 3-manifolds. A number of authors, then attempted, with varying degrees of success, to merge the definitions. Most notably, Heath-Kobayashi \cite{HK} showed that from a height function minimizing Gabai width it is possible to construct a tangle decomposition of $L$ by essential spheres that are closely related to the thick and thin spheres, but are not necessarily concentric. Hayashi-Shimokawa \cite{HS} developed this, constructing tangle decompositions by not necessarily concentric thick and thin spheres with useful properties. From Hayashi and Shimokawa's tangle decomposition it is possible to construct an associated height function on $(S^3, L)$ not to $\R$ but to a tree.  Tomova \cite{Tomova2} began the extension of Hayashi and Shimokawa's work to make use of cut-discs. Recently, Taylor-Tomova \cite{TT1} gave a thorough treatment applicable in a wide variety of contexts. In \cite{TT2}, they  adapted Gabai width to these contexts and showed that it becomes additive under connected sum. Indeed, they defined two families of link invariants for a link $L$: $\netextent_x(L)$ and $\width_x(L)$ where $x \geq -2$ is an even integer that parameterizes the families and is a certain measure of Euler characteristic. When $x = -2$, these invariants are naturally associated to tangle decompositions of $L$ (since $\chi(S^2) = 2$); for $x > -2$, they are naturally associated to decompositions of $(S^3, L)$ by surfaces of possibly higher genus. We do not consider those decompositions in this paper; though this paper can be considered as additional evidence that those higher genus invariants are appropriate analogues of bridge number. These definitions are due to Taylor-Tomova \cite{TT2}, relying heavily on \cite{Tomova2} and \cite{HS}. Our c-trivial tangles are examples of the vp-compressionbodies in \cite{TT1}. See Figure \ref{fig:c-trivial} for an example.

\begin{definition}
Suppose that $(C, \tau_C)$ is a tangle such that $\boundary C \neq \nil$ and with one component of $\boundary C$ designated as $\boundary_+ C$ (and called the \defn{positive boundary}) and the union of other components designated as $\boundary_- C$ (and called the \defn{negative boundary}).  The tangle $(C, \tau_C)$ is a \defn{c-trivial tangle} if there is a (possibly empty) collection $\Delta$ of pairwise disjoint sc-discs for $\boundary_+ C$ such that the result $(C, \tau_C) \setminus \Delta$ of $\boundary$-reducing $(C, \tau_C)$ using $\Delta$ is the disjoint union of product tangles and elementary ball tangles, each with their positive boundary inherited from that of $C$. If it is possible to choose $\Delta$ so that it contains only compressing discs and semicompressing discs, then $(C, \tau_C)$ is a \defn{trivial tangle}. Two c-trivial tangles are equivalent if there is a homeomorphism of pairs taking one to the other and preserving the positive boundaries.
\end{definition}

\begin{figure}[ht!]
\centering
\labellist
\small\hair 2pt
\pinlabel {$\boundary_+ C$} [bl] at 97 129
\endlabellist
\includegraphics[scale=1.0]{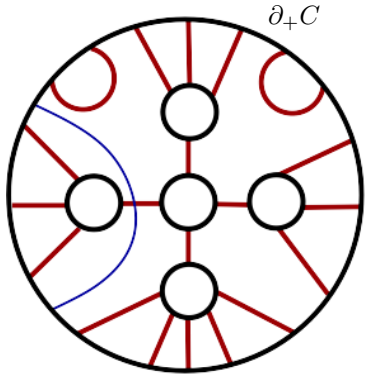}
\caption{An example of a c-trivial tangle $(C, \tau_C)$ with two bridge arcs, four ghost arcs, and fourteen vertical arcs. A cut disc for $\boundary_+ C$ is shown.}
\label{fig:c-trivial}
\end{figure}

If $(C, \tau_C)$ is a c-trivial tangle, then the components of $\tau_C$ can be categorized into three types. There are \defn{vertical arcs}, which are the components having one endpoint on $\boundary_+ C$ and one endpoint on $\boundary_- C$; the \defn{bridge arcs}, which are the components having both endpoints on $\boundary_+ C$; and the \defn{ghost arcs}, which are the components having both endpoints on $\boundary_- C$. We will have occasion to use the \defn{ghost arc graph} of a c-trivial tangle. This is the graph whose vertices are the components of $\boundary_- C$ and whose edges are the ghost arcs. In our setting, since $\boundary_+ C$ is a sphere, each component of the ghost arc graph is a tree. A c-trivial tangle is irreducible if and only if its negative boundary contains no unpunctured or once-punctured sphere. In that case, there are no semi-compressing discs that are not compressing discs. Similarly, a c-trivial tangle that is not an elementary ball tangle is prime and irreducible if and only if no negative boundary component is a sphere with two or fewer punctures. In this case, there are no semi c-discs that are not c-discs.

\begin{definition}
Suppose that $(C_i, \tau_i)$ is a c-trivial tangle for $i = 1,2$ and that $|\boundary_+ C_1 \cap \tau_1| = |\boundary_+ C_2 \cap \tau_2|$. Let $(M, \tau)$ be a tangle that results from gluing $(C_1, \tau_1)$ to $(C_2, \tau_2)$ via a homeomorphism of $\boundary_+ C_1$ to $\boundary_+ C_2$ taking punctures to punctures. Let $S$ be the image of $\boundary_+ C_1$ (equivalently $\boundary_+ C_2$) in $(M, \tau)$. We say that $S$ is a \defn{c-bridge surface} for $(M, \tau)$. If $S$ has a transverse orientation it is an \defn{oriented c-bridge surface}. If both $(C_1, \tau_1)$ and $(C_2, \tau_2)$ are trivial tangles then $S$ is an \defn{(oriented) bridge surface}. 

A \defn{multiple c-bridge surface} $\mc{H} = \mc{H}^+ \cup \mc{H}^-$ for a tangle $(M, \tau)$ is the union of pairwise disjoint spheres in $(M, \tau)$ such that the following hold:
\begin{enumerate}
\item Each component of $\mc{H}$ lies in precisely one of $\mc{H}^+$ or $\mc{H}^-$,
\item Each component of $(M, \tau) \setminus \mc{H}$ is a c-trivial tangle,
\item $\mc{H}^+$ is the union of the positive boundary components of the components of $(M, \tau) \setminus \mc{H}$ and $\boundary M \cup \mc{H}^-$ is the union of the negative boundary components.
\end{enumerate}
Furthermore, if $\mc{H}$ has a transverse orientation such that for each c-trivial tangle $(C, \tau_C)$ of $(M, \tau)\setminus \mc{H}$, if the transverse orientation on $\boundary_+ C$ points into $C$ then the transverse orientation on each component of $\boundary_- C$ points out of $C$ and vice versa, then $\mc{H}$ is an \defn{oriented multiple c-bridge surface}. The components of $\mc{H}^-$ are \defn{thin surfaces} and the components of $\mc{H}^+$ are \defn{thick surfaces}. For a given tangle $(M, \tau)$, we let $\H(M,\tau)$ denote the set of oriented multiple c-bridge surfaces up to isotopy transverse to $\tau$. See Figure \ref{fig:multicbridge} for an example of a multiple c-bridge surface. 
\end{definition}

\begin{remark}
Without the presence of ghost arcs, the resulting multiple c-bridge surface is the ``multiple Heegaard splitting'' of \cite{HS}. In our setting we will refer to this as a \defn{multiple bridge surface}. The corresponding dual trees were used in \cite{dMPSS} to relate computational properties of knot diagrams to the topological structure of knots. Our width trees are similar to the fork complexes of \cite{SSS}, but applied to multiple c-bridge surfaces rather than Heegaard splittings of 3-manifolds. 
\end{remark}

\begin{figure}[ht!]
\includegraphics[scale=0.5]{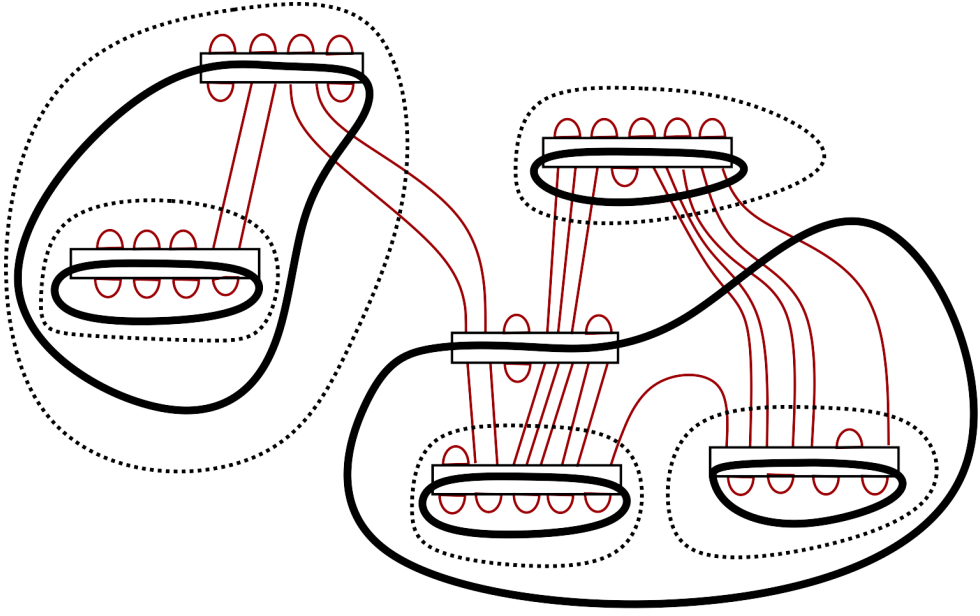}
\caption{An example of a multiple c-bridge surface  $\mc{H}^+$. The spheres of $\mc{H}^+$ are shown in thick lines and the spheres of $\mc{H}^-$ are shown in dashed lines. Braids should be placed into each of the boxes to obtain a knot or link $L$ such that $\mc{H} \in \H(M,L)$ after orienting $\mc{H}$. There is a single ghost arc in $L \setminus \mc{H}$.}
\label{fig:multicbridge}
\end{figure}

From an oriented multiple c-bridge surface $\mc{H}$ for a tangle or link $(M, \tau)$ we can construct an associated width tree. There is an orientation on the multiple c-bridge surface $\mc{H}$ in Figure \ref{fig:multicbridge} so that the width tree in Figure \ref{fig:widthtree} is associated to it.  When $M = S^3$, the width tree is essentially the dual tree to the surface $\mc{H}^-$, with edges oriented according to the orientations on $\mc{H}^-$. More precisely, we do the following: Take a vertex for each component of $\mc{H}^+ \cup \boundary M$, with the vertices corresponding to the components of $\boundary M$ being the boundary vertices of $T$. The label $\lambda$ on each vertex is defined to be the extent of the corresponding sphere. Suppose that  $(C_i, \tau_i) \cpt (M, \tau)\setminus \mc{H}$ for $i = 1,2$ are two c-trivial tangles incident along a component $F \cpt (\mc{H}^- \cap \boundary_- C_1 \cap \boundary_- C_2)$. Join the vertices $\boundary_+ C_1$ and $\boundary_+ C_2$ by an edge with orientation corresponding to the orientation on $F$. Finally, if $F \cpt \boundary M$ and $F \cpt \boundary_- C$ for a c-trivial tangle $(C_1, \tau_1)$, then join the vertex of $T$ corresponding to $F$ to the vertex corresponding to $\boundary_+ C$ by an edge with orientation induced by the orientation of $\boundary_+ C$.

\begin{definition}
A width tree is \defn{associated} to  $\mc{H} \in \H(M,\tau)$ if it is constructed in this way. We say that a width tree is \defn{associated} to a tangle $(M, \tau)$ if there exists $\mc{H} \in \H(M, \tau)$ such that the width tree is associated to $\mc{H}$. For a tangle or link $(M, \tau)$ we let $\T(M,\tau)$  be the set of all width trees associated to $(M, \tau)$. 
\end{definition}

We now turn to creating tangle invariants, beginning with invariants of multiple c-bridge surfaces.
\begin{definition}
Suppose that $\mc{H} \in \H(M, \tau)$ is associated with a width tree $(T, \lambda)$ with $(M,\tau)$ an even tangle. Define
\[\netextent(\mc{H}) = \netextent(T, \lambda) = \sum\limits_{H \cpt \mc{H}^+} \extent(H) - \sum\limits_{F \cpt \mc{H}^-} \extent(F) \] 
and
\[
\width(\mc{H}) = \width(T, \lambda) = \sum\limits_{H \cpt \mc{H}^+} \extent(H)^2 - \sum\limits_{F \cpt \mc{H}^-} \extent(F)^2.
\]
Also define
\[
\tr(\mc{H}) = \tr(T,\lambda) = \max \{ \extent(H) : H \cpt \mc{H}^+\}
\]
\end{definition}

\begin{definition}
Suppose that $(M,\tau)$ is an even tangle. The \defn{bridge number} of $(M,\tau)$ is the minimum of $\{1 + \extent(H)\}$ over all bridge spheres $H$ for $(M,\tau)$. Define $\netextent_{-2}(M,\tau)$, $\width_{-2}(M,\tau)$, and $\tr(M,\tau)$ to be the minima of $\netextent(\mc{H})$, $\width(\mc{H})$, and $\tr(\mc{H})$ over all $\mc{H}\in \H(M,\tau)$. Define the \defn{bridge width} $\width_b(M,\tau)$ to be the minimum of $\width(\mc{H})$ over all $\mc{H} \in \H(M,\tau)$ such that $\netextent(\mc{H}) = \netextent(M,\tau)$.
\end{definition}

\begin{remark}\label{classical rel}
The invariants $\netextent_{-2}(M,\tau)$ and $\width_{-2}(M,\tau)$ are due to Taylor--Tomova \cite{TT2}. They define the invariants for knots, links, spatial graphs, or 1--manifolds more generally, in most any 3--manifold. The subscripted $-2$ refers to the fact that we are considering only multiple c-bridge surfaces consisting of spheres. Much of what we do could be generalized beyond that context. If $M = S^3$ and $\tau$ is a link, then Gabai's width $\width_G(M,\tau)$ is obtained by taking the minimum over all $(T,\lambda) \in \T(M,\tau)$ such that $T$ is a coherent path of the quantity
\[
\width(T,\lambda) + 4\netextent(T,\lambda) - 2.
\]
Similarly, Ozawa's trunk \cite{Ozawa} and Blair--Zupan's bridge width \cite{BZ} can be defined as certain minima over $(T,\lambda) \in \T(M,\tau)$ such that $T$ is a coherent path (Definition \ref{digraph term}). Blair-Ozawa \cite{BO} establish connections between Gabai's thin position, especially Ozawa's trunk, and ``representativity'' of a knot. In particular, given an essential tangle decomposition of a knot, twice the representativity of the knot gives a lower bound on the minimum of the trunk of the two tangles. We expect similar results to hold in our setting, with nearly \emph{verbatim} proofs.
\end{remark}

For links, it turns out that the invariant $\netextent_{-2}$ coincides with one less than bridge number (thus, justifying the terminology ``bridge width'' above). The following is from the forthcoming paper \cite{TT3} of Taylor and Tomova. Since it has not yet appeared, we provide a sketch of the proof for those familiar with the amalgamation of Heegaard splittings \cite{Schultens}. (See also \cite[Theorem 4.1]{Saito} for a similar result using similar methods.)

\begin{theorem}\label{amalg}
Suppose that a link $L \subset S^3$ is associated to a width tree $(T, \lambda)$. Then there exists a bridge sphere $H$ for $L$ such that $|H \cap L|/2 - 1 = \netextent(T, \lambda)$. In particular, $\b(L) - 1$ is the minimum of $\netextent(T, \lambda)$ over any set of width trees $(T,\lambda)$ associated to $L$ that includes all associated width trees having a single vertex.
\end{theorem}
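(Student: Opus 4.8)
The plan is to mimic Schultens's amalgamation of Heegaard splittings \cite{Schultens}, adapted to multiple c-bridge surfaces, so as to reduce an arbitrary $\mc{H} \in \H(S^3, L)$ to a single bridge sphere without changing the net extent. By hypothesis there is some $\mc{H} = \mc{H}^+ \cup \mc{H}^- \in \H(S^3, L)$ with associated width tree $(T, \lambda)$, and by definition $\netextent(\mc{H}) = \netextent(T,\lambda)$. Since $\boundary S^3 = \nil$, the tree $T$ has no boundary vertices and every negative boundary sphere of every c-trivial tangle of $(S^3, L) \setminus \mc{H}$ is a thin sphere. So it suffices to show: (i) a single ``amalgamation move'' removes one thin sphere from $\mc{H}$, keeps the result in $\H(S^3, L)$, and preserves net extent; and (ii) when $\mc{H}^- = \nil$, the unique thick sphere is a bridge sphere.

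For (i), fix a thin sphere $F \cpt \mc{H}^-$ and let $(C_1, \tau_1), (C_2, \tau_2) \cpt (S^3, L)\setminus \mc{H}$ be the two c-trivial tangles with $F \cpt \boundary_- C_1 \cap \boundary_- C_2$ and thick spheres $H_i = \boundary_+ C_i$. I would $\boundary$-reduce each $C_i$ along a defining collection $\Delta_i$ of sc-discs so that its vertical, bridge, and ghost arcs are put in a standard position relative to a collar $F \times I$, and then form a new sphere $H$ by: taking $F\times I$, attaching on the $C_2$ side the $1$--handles dual to $\Delta_2$ together with tubes running along the ghost arcs of $C_2$ incident to $F$ (arriving at a parallel copy of $H_2$), and on the $C_1$ side tubing $F$ along the vertical arcs of $C_1$ up to $H_1$ while deleting the corresponding subdiscs of $H_1$, the remaining part of $C_1$ (its bridge and ghost arcs and spine, minus those handled above) being absorbed into the compressionbody on the negative side of $H$. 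A bookkeeping computation — using that $F, H_1, H_2$ are spheres (each with $\chi = 2$) and that tubing along an arc with both endpoints on one component changes $(-\chi + \#\text{punctures})$ additively — gives $\extent(H) = \extent(H_1) + \extent(H_2) - \extent(F)$, so replacing $\{H_1, F, H_2\}$ by $\{H\}$ yields $\mc{H}'$ with one fewer thin sphere and $\netextent(\mc{H}') = \netextent(\mc{H})$. \textbf{The main obstacle} is verifying that the two tangles on either side of $H$ are again c-trivial, i.e. that $\mc{H}' \in \H(S^3, L)$: one must exhibit a defining collection of sc-discs for each, and this is precisely where ghost arcs (absent in classical Heegaard amalgamation) require care, since ghost arcs incident to $F$ may combine with vertical arcs across $F$ into bridge arcs of the amalgam or be swallowed into the negative side.

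Iterating the amalgamation move over all thin spheres produces $\mc{H}'' \in \H(S^3, L)$ with a single thick sphere $H$, no thin spheres, and $\extent(H) = \netextent(\mc{H}) = \netextent(T, \lambda)$. Then $(S^3, L) = (C_1', \tau_1') \cup_H (C_2', \tau_2')$ with each $(C_i', \tau_i')$ a c-trivial tangle having empty negative boundary, so $C_i' = B^3$. Since cutting a $3$--ball along discs can only yield $3$--balls (never a product $S^2 \times I$), $\boundary$--reducing such a tangle along sc-discs produces only elementary ball tangles; the defining discs may then be taken to be compressing and semicompressing discs, so each $(C_i', \tau_i')$ is in fact a trivial tangle and $H$ is a bridge sphere. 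As $H$ is an actual sphere, $\extent(H) = |H \cap L|/2 - 1$, giving $|H \cap L|/2 - 1 = \netextent(T, \lambda)$, which is the first assertion.

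For the ``in particular'' statement, let $\mc{S}$ be any set of width trees associated to $L$ containing all associated single-vertex width trees. For $(T, \lambda) \in \mc{S}$ the first part gives a bridge sphere $H$ with $|H \cap L|/2 - 1 = \netextent(T, \lambda)$; since $\b(L)$ is the minimum of $|H \cap L|/2$ over bridge spheres, $\netextent(T, \lambda) \geq \b(L) - 1$. Conversely, a bridge position realizing $\b(L)$ yields a bridge sphere $H_0$ with $|H_0 \cap L|/2 = \b(L)$, whose associated single-vertex width tree lies in $\mc{S}$ and has net extent $\b(L) - 1$. Hence $\min_{(T, \lambda) \in \mc{S}} \netextent(T, \lambda) = \b(L) - 1$.
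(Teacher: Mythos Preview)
Your overall strategy---amalgamate \`a la Schultens to collapse $\mc{H}$ down to a single bridge sphere while preserving net extent---is exactly the paper's approach. But you amalgamate along an \emph{arbitrary} thin sphere $F$, and you correctly flag the resulting ghost-arc bookkeeping as ``the main obstacle'' without actually resolving it. The paper avoids this obstacle entirely by a simple trick: it always amalgamates along an edge of $T$ incident to a \emph{leaf}. At a leaf, one of the two tangles $(C'_1,\tau'_1)$ has $\boundary_- C'_1 = \nil$ (it is a $3$--ball with only bridge arcs), and the thin sphere $F$ is then the \emph{entire} negative boundary of the adjacent tangle $(C'_2,\tau'_2)$. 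Since $F$ is connected, $\tau'_2$ has no ghost arcs either, so $(C'_2,\tau'_2)$ can be reduced to $F\times I$ using only compressing discs (no cut discs), and the amalgamation becomes the classical one. After the amalgamation the tree has one fewer leaf, and you iterate. This is the missing idea in your argument: rather than confronting ghost arcs head-on, choose the order of amalgamation so they never appear.

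A smaller point: your argument for (ii), that a c-bridge sphere in $(S^3,L)$ is automatically a bridge sphere, has a gap. You say that because cutting a ball along discs yields only balls, ``the defining discs may then be taken to be compressing and semicompressing discs''---but the first clause does not imply the second; nothing prevents the original sc-disc collection from containing cut discs. The paper simply asserts this fact; a clean way to see it is that in a c-trivial tangle with $\boundary_- C=\nil$ every arc is a bridge arc and is $\boundary$-parallel (this follows from the structure theory of vp-compressionbodies in \cite{TT1}), so the tangle is trivial.
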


\begin{proof}
Suppose $(M, \tau)$ is a tangle with transversally oriented c-bridge surface $H$ dividing $(M, \tau)$ into c-trivial tangles $(C_1, \tau_1)$ below $H$ and $(C_2, \tau_2)$ above $H$. We may view $(M, \tau)$ as being constructed from $\boundary_- C_1$ by attaching 1-handles (possibly containing subarcs of $\tau$ as their cores)  to $\boundary_-C_1 \times \{1\} \subset \boundary_- C_1 \times I$ and then 2-handles (possibly containing subarcs of $\tau$ as cocores) and then 3-handles (each containing up to one subarc of $\tau$ that is $\boundary$-parallel) and product tangles. Suppose that $(M', \tau')$ is another such tangle and that $H'$ is a transversally oriented c-bridge surface for $(M', \tau')$ dividing it into $(C'_1, \tau'_1)$ and $(C'_2, \tau'_2)$ with $(C'_1, \tau'_1)$ below $H'$. Suppose also that $\boundary_- C'_1 = \nil$ and that $F = \boundary_- C'_2$ is connected and is also a component of $\boundary_- C_1$. Since $F = \boundary_- C'_2$ is connected, $\tau'_2$ does not contain any ghost arcs. We may reduce $(C'_2, \tau'_2)$ along a collection $\Delta$ of compressing discs (no cut-discs) to arrive at another copy $W$ of $F \times I$. The remnants of these compressing discs are unpunctured discs in $\boundary W$. Since those discs are unpunctured, we may extend the attaching regions of the 1-handles in $C_2$ that lie on $F \times \{1\} \subset C_2$ vertically through $C_1 \cup W$ to lie on $F\times \{1\} \subset C_1$ as unpunctured discs. It follows that we may construct a c-bridge surface $J$ for $(M \cup M', \tau \cup \tau')$, called the \defn{amalgamation} of $H$ and $H'$ along $F$, with $\extent(J) = \extent(H_1) + \extent(H_2) - \extent(F)$.

Now if $(T, \lambda)$ is associated to a multiple c-bridge surface $\mc{H}$ for $L$, we choose some edge $e$ of $T$ incident to a leaf of $T$. Let $F$ be the associated thin surface to $e$ and $H'$ and $H''$ the thick surfaces associated to the endpoints of $e$. We may amalgamate along $F$ to produce a new multiple c-bridge surface for $L$ having one less thick surface and one less thin surface. The effect on the associated width tree is to remove an edge and vertex (and change the labelling). Repeating this eventually produces a width tree with no edges but with the same netextent as $\mc{H}$. This width tree is associated to a c-bridge surface $H$ for $L$ with $\extent(H) = \netextent(\mc{H})$. Since every c-bridge sphere for a link in $S^3$ is also a bridge sphere, the result follows.
\end{proof}

The invariants we have introduced for width trees are related to each other. The proof of the next lemma involves a combinatorial version of amalgamation.

\begin{lemma}\label{relations}
Suppose that $(T, \lambda)$ is a non-negative width tree. Then
\[ 2\tr(T, \lambda)^2 \leq \width(T, \lambda) \leq 2\netextent(T, \lambda)^2. \]
\end{lemma}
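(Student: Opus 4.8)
The plan is to prove the two inequalities by rather different means. The lower bound $2\tr(T,\lambda)^2\le\width(T,\lambda)$ I would deduce directly from the width-tree axioms together with the nonnegativity hypothesis, by a rooting argument. The upper bound $\width(T,\lambda)\le 2\netextent(T,\lambda)^2$ I would obtain from a combinatorial analogue of the amalgamation used in the proof of Theorem \ref{amalg}: an operation that deletes a leaf of a width tree, keeps the result a nonnegative width tree, preserves net extent, and weakly increases width, so that iterating it reduces the inequality to the single-vertex case, where net extent is the lone label and width is twice its square.

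The fact used throughout is that in a \emph{nonnegative} width tree $\lambda(v)\ge\lambda(e)$ whenever $e$ is an edge incident to a vertex $v$: if $v$ is a boundary vertex this is axiom (1), and otherwise $e$ is an incoming or an outgoing edge at $v$, so axiom (2) gives $\lambda(v)\ge\sum\lambda(f)\ge\lambda(e)$, the sum running over the incoming (resp.\ outgoing) edges at $v$ and every term being nonnegative. For the lower bound I would root $T$ at a vertex $v_0$ with $\lambda(v_0)=\tr(T,\lambda)$; the assignment $v\mapsto e(v)$ sending each non-root vertex to its parent edge is a bijection from $V\setminus\{v_0\}$ onto $E$, so
\[
\tfrac12\width(T,\lambda)=\sum_{v\in V}\lambda(v)^2-\sum_{e\in E}\lambda(e)^2=\lambda(v_0)^2+\sum_{v\neq v_0}\bigl(\lambda(v)^2-\lambda(e(v))^2\bigr)\ \ge\ \lambda(v_0)^2,
\]
since each bracketed term is nonnegative by the key fact. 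This is exactly $2\tr(T,\lambda)^2\le\width(T,\lambda)$.

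For the upper bound I would define, for a leaf $v$ of $T$ with unique incident edge $e$ and other endpoint $w$, the amalgamated pair $(T',\lambda')$ obtained by deleting $v$ and $e$, leaving all other labels alone, and setting $\lambda'(w)=\lambda(w)+\lambda(v)-\lambda(e)$. Since $\lambda(v)\ge\lambda(e)\ge 0$, the function $\lambda'$ is again nonnegative and integer-valued; one checks $(T',\lambda')$ is a width tree by noting that $w$ is the only vertex whose label changed, that its label only increased while it lost one incident edge, so both clauses of axiom (2) persist at $w$, and that the boundary conditions are inherited (the case where $T$ is a single edge is included, with $T'$ a single vertex). A direct computation then gives $\netextent(T',\lambda')=\netextent(T,\lambda)$ and
\[
\tfrac12\width(T,\lambda)-\tfrac12\width(T',\lambda')=-2\bigl(\lambda(v)-\lambda(e)\bigr)\bigl(\lambda(w)-\lambda(e)\bigr)\ \le\ 0,
\]
both factors being nonnegative by the key fact. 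Iterating amalgamation until one vertex $v^{*}$ remains produces a width tree with $\lambda^{*}(v^{*})=\netextent(T,\lambda)=:N$ and $\width(T^{*},\lambda^{*})=2N^2$; along the way net extent never changed and width never decreased, so $\width(T,\lambda)\le 2N^2=2\netextent(T,\lambda)^2$.

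The part that needs the most care — though it is routine bookkeeping rather than a genuine difficulty — is verifying that combinatorial amalgamation stays within the class of nonnegative width trees: that $\lambda'$ remains nonnegative and integral, that axiom (2) still holds at the modified vertex $w$, and that axiom (1) is preserved. The essential point is that nonnegativity of $\lambda$ forces $\lambda(e)\ge 0$, hence $\lambda(v)\ge\lambda(e)$ and $\lambda(w)\ge\lambda(e)$, which is what gives the width change above a definite sign; for a width tree with negative labels the argument genuinely breaks. One could also record, as a byproduct, when the bounds are attained: the lower bound is an equality precisely when $\lambda(v)=\lambda(e(v))$ for every non-root vertex after rooting at a trunk vertex, and the upper bound is an equality precisely when at every amalgamation step one of $\lambda(v)-\lambda(e)$, $\lambda(w)-\lambda(e)$ vanishes — as happens, for instance, when $T$ is a coherent path all of whose labels coincide.
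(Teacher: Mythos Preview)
Your proof is correct. For the upper bound you use exactly the combinatorial amalgamation the paper uses (its sequence $\lambda'_i$), and your width computation $-2(\lambda(v)-\lambda(e))(\lambda(w)-\lambda(e))\le 0$ is the content of the paper's terse ``our desired inequalities hold by induction.''

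For the lower bound you take a genuinely different and more direct route. The paper runs a second, parallel leaf-removal process with the labeling $\lambda_{i+1}(w)=\max\{\lambda_i(w),\lambda_i(u)\}$; this preserves trunk and (by the same key fact $\lambda(e)\le\min\{\lambda(u),\lambda(w)\}$) weakly \emph{decreases} width, so at the end one is left with a single vertex of label $\tr(T,\lambda)$ and width $2\tr(T,\lambda)^2$. Your rooting argument bypasses the induction entirely: pairing each non-root vertex with its parent edge turns $\tfrac12\width$ into $\tr^2$ plus a sum of nonnegative terms. Your approach is shorter and makes the equality case transparent; the paper's approach has the virtue of treating both inequalities by a single uniform mechanism (leaf deletion with two different relabeling rules), which fits the amalgamation theme running through the paper.
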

\begin{proof}
Let $T_0 = T$ and $\lambda_0 = \lambda'_0 = \lambda$. If $T_i$ has an edge, construct $T_{i+1}$ by choosing a leaf $u$ of $T_i$ with incident edge $f$ and remove both $u$ and $f$ from $T_i$. Suppose that the other endpoint of $f$ is at a vertex $w$ of $T_i$. Let $\lambda_{i+1}$ and $\lambda'_{i+1}$ coincide with $\lambda_i$ and $\lambda'_i$ (respectively) on all vertices and edges of $T_{i+1}$ except $w$. Let $\lambda_{i+1}(w)$ be the maximum of $\lambda_i(w)$ and $\lambda_i(u)$. Let $\lambda'_{i+1}(w) = \lambda'_i(u) + \lambda'_i(w) - \lambda'_i(f)$. We thus arrive at a sequence of trees $T_0, \hdots, T_n$ with $T_n$ a single vertex $t$. For each $i$, $(T_i, \lambda_i)$ and $(T_i, \lambda'_i)$ are width trees. Since $T_n = t$, $\width(T_n, \lambda_n) = 2\lambda_n(t)^2$, $\width(T_n, \lambda'_n) = 2\lambda'_n(t)^2$, $\netextent(T_n, \lambda_n) = \lambda_n(t)$, and $\width(T_n, \lambda'_n) = 2\lambda'_n(t)^2$. Our desired inequalities hold by induction.
\end{proof}

Despite the connection with link invariants, width trees in full generality do not capture much of the topology of a link. The next section gives an extended example, and then we show how to put additional structure on the width trees to make them more useful.

\section{Interlude: Extended Examples}

Davies and Zupan \cite{DZ}, adapting earlier work of Blair-Tomova \cite{BT} and Scharlemann-Thompson \cite{ST-width}, introduced a family of knots, together with two height functions on those knots. Thinking of the height function as projection to a vertical axis, these height functions correspond to two particular configurations of the knots in $S^3$. The family depends on a choice of  4-tuple $(r_1, r_2, s_1, s_2)$ of natural numbers. The two configurations are denoted $k(r_1, r_2, s_1, s_2)$ and $k'(r_1, r_2, s_1, s_2)$ and are depicted on the left and right of Figure \ref{premoreex} respectively. We've drawn a more schematic version than that provided by Davies--Zupan.

\begin{figure}[ht!]
\centering
\labellist
\small\hair 1pt
\pinlabel {$A$} [bl] at 85 272
\pinlabel {$A$} [bl] at 261 173
\pinlabel {$B$} [bl] at 154 239
\pinlabel {$B$} [bl] at 294 271
\pinlabel {$C$} [tl] at 85 121
\pinlabel {$C$} [br] at 217 222
\pinlabel {$D$} [tl] at 153 148
\pinlabel {$D$} [tl] at 296 108
\pinlabel {$1$} [l] at 33 209
\pinlabel {$1$} [r] at 221 190
\pinlabel {$r_1$} [r] at 57 251
\pinlabel {$r_1$} [r] at 57 140
\pinlabel {$r_1$} [r] at 232 227
\pinlabel {$r_1$} [r] at 232 150
\pinlabel {$r_2$} [t] at 130 215
\pinlabel {$r_2$} [b] at 130 173
\pinlabel {$r_2$} [r] at 273 245
\pinlabel {$r_2$} [r] at 273 131
\pinlabel {$s_1$} [r] at 92 220
\pinlabel {$s_1$} [r] at 88 171
\pinlabel {$s_1$} [l] at 257 230
\pinlabel {$s_1$} [l] at 257 147
\pinlabel {$s_2$} [l] at 147 196
\pinlabel {$s_2$} [l] at 291 196
\endlabellist
\includegraphics[scale=1.0]{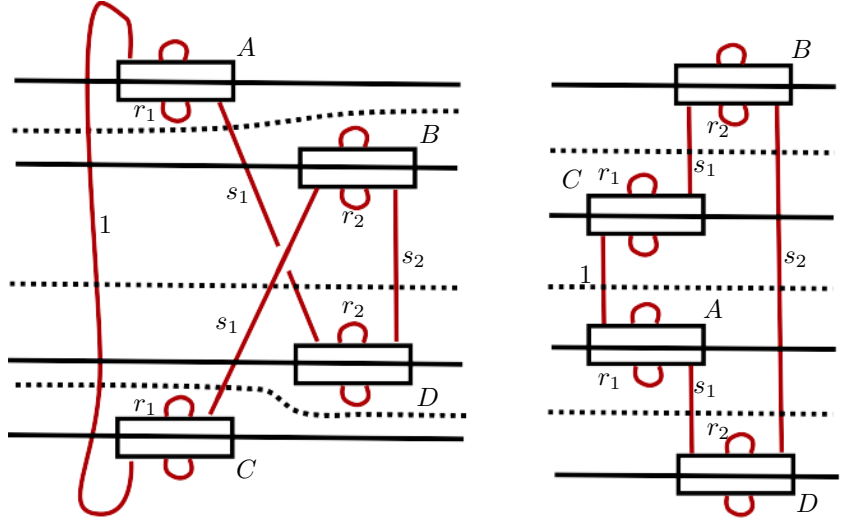}
\caption{The configurations $k(r_1, r_2, s_1, s_2)$ (on the left) and $k'(r_1, r_2, s_1, s_2)$ (on the right) of Davies-Zupan. For consistent choices of parameters and braids, $k$ and $k'$ are isotopic in $S^3$. Each box represents a braid of a certain number of strands. The cups and caps labelled with a natural number $n$ represent $n$ bridge arcs capping off strands of the braid on top or bottom, while the line segments labelled $n$ represent that many vertical arcs attached to the top or bottom of the braid. Unlabelled bridge arcs or vertical arcs represent however many bridge arcs or vertical arcs are needed to ensure that the top and bottom of each braid strand is attached to a vertical arc or bridge arc. For example, the braid in box $A$ has $2r_1 + s_1$ strands. The thick horizontal lines represent the thick spheres of the height function and the dashed lines represent the thin spheres of the height function.}
\label{premoreex}
\end{figure}

The isotopic configurations $k(3,0,3,3)$ and $k'(3,0,3,3)$ play an important role in Blair and Tomova's proof that Gabai's width is not additive. Davies and Zupan show that the configuration $k(2,1,3,7)$ has greater trunk and bridge number than that of $k'(2,1,3,7)$ but smaller Gabai width. They also show that the configuration $k(4,1,3,3)$ has greater bridge number and Gabai width than that of $k'(4,1,3,3)$, but smaller trunk. Key to the work of Blair-Tomova is the result that $k'(3,0,3,3)$ is in Gabai thin position, as long as the braids are sufficiently complicated. Blair-Zupan \cite{BZ} further extend their techniques. In \cite{TT2}, Taylor and Tomova show that this position is not ``locally thin'' in their sense. 

For both $k(r_1, r_2, s_1, s_2)$ and $k'(r_1, r_2, s_1, s_2)$, the thick and thin spheres are all concentric; consequently, the associated width trees are paths.  In Figure \ref{moreex}, for both $k(r_1, r_2, s_1, s_2)$ and $k'(r_1, r_2, s_1, s_2)$ we exhibit other multiple vp-bridge surfaces and their corresponding width trees. The widths (whether in the sense of Gabai or in the sense of this paper) for these new decompositions are strictly smaller than for those in Figure \ref{premoreex}. More significantly, the decomposition exhibited on the left of Figure \ref{moreex} is not locally thin. Theorem \ref{biggest one} below shows that if the braids are complicated enough, then the width tree on the right of Figure \ref{moreex} is the unique slim width tree for the knot.

\begin{figure}[ht!]
\centering
\labellist
\small\hair 1pt
\pinlabel {$r_1 + \frac{s_1}{2} - 1$} [b] at 45 134
\pinlabel {$r_1 + \frac{s_1}{2} - 1$} [t] at 45 9
\pinlabel {$r_2 + (s_1 + s_2)/2 - 1$} [b] at 153 134
\pinlabel {$r_2 + (s_1 + s_2)/2 - 1$} [t] at 153 11
\pinlabel {$\frac{s_1}{2} - 1$} [tr] at 65 96
\pinlabel {$\frac{s_1}{2} - 1$} [br] at 65 40
\pinlabel {$\frac{s_1 + s_2}{2} - 1$} [tl] at 134 96
\pinlabel {$\frac{s_1 + s_2}{2} - 1$} [bl] at 134 40
\pinlabel {$s_1 + (s_2 + 1)/2 - 1$} [l] at 107 74
\pinlabel {$r_2 + (s_1 + s_2)/2 - 1$} [r] at 358 151
\pinlabel {$r_1 + \frac{s_1}{2} - 1$} [r] at 282 121
\pinlabel {$r_1 + \frac{s_1}{2} - 1$} [r] at 283 48
\pinlabel {$r_2 + (s_1 + s_2)/2 - 1$} [r] at 358 6
\endlabellist
\includegraphics[scale=1.0]{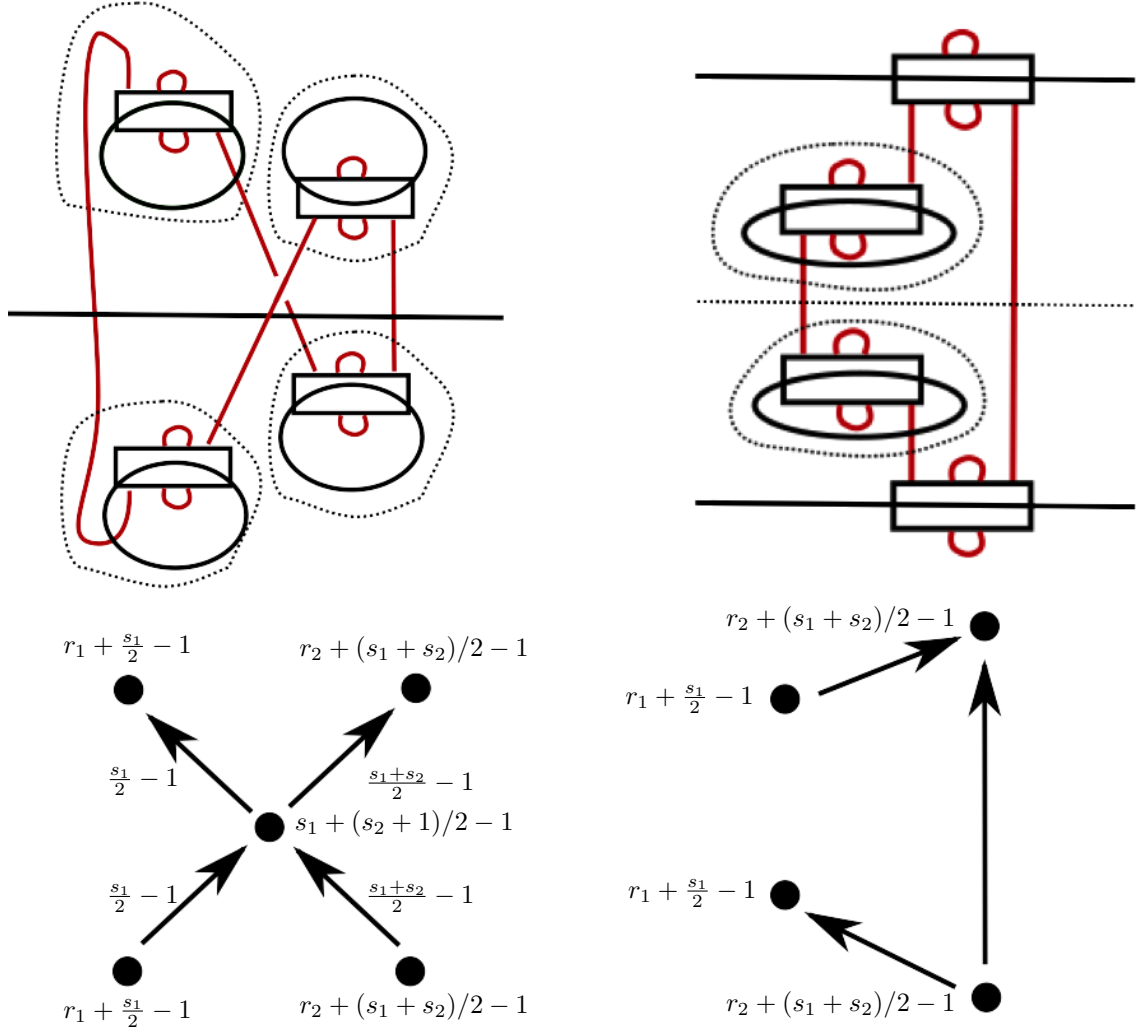}
\caption{The configurations $k(r_1, r_2, s_1, s_2)$ (on the left) and $k'(r_1, r_2, s_1, s_2)$ (on the right) with new multiple vp-bridge surfaces, along with the associated width trees. We have omitted the labels that are the same as in Figure \ref{premoreex}.}
\label{moreex}
\end{figure}

\section{Slim Width Trees}\label{Slim Width Trees}

Recall that in Definition \ref{def: slim}, we defined the notion of a \emph{slim width tree}. These are those width trees that are productless and satisfy a distance threshold of at least 2. Let $\T_2(M,\tau) \subset \T(M,\tau)$ be the set of slim width trees. In this section, we reinterpret recent work of Taylor--Tomova to show the following theorem. The first part of the theorem is proved as Proposition \ref{slim exist} and the second as Proposition \ref{props}.
\begin{theorem}\label{thm: slim main}
Suppose that $(M,\tau)$ is an even tangle such that $\tau \neq \nil $, each component of $\boundary M$ is c-incompressible and has at least 4 punctures, and  $(M,\tau)$ is not a product tangle. Then $\T_2(M,\tau)$ is nonempty and the following hold:
\begin{enumerate}
\item There exists $(T, \lambda) \in \T_2(M,\tau)$ such that $\netextent(T,\lambda) = \netextent_{-2}(M,\tau)$ and $\width(T,\lambda) = \width_b(M,\tau)$.
\item There exists $(T, \lambda) \in \T_2(M,\tau)$ such that $\width(T,\lambda) = \width_{-2}(M,\tau)$. 
\item For each $(T, \lambda) \in \T_2(M,\tau)$, there is an edge $e$ of $T$ such that $\lambda(e) = -2$ if and only if $(M,\tau)$ contains an essential unpunctured sphere.
\item For each $(T, \lambda) \in \T_2(M,\tau)$, there is an edge $e$ of $T$ such that $\lambda(e) = 0$ if and only if $(M,\tau)$ is composite.
\item For each $(T, \lambda) \in \T_2(M,\tau)$, if there is an edge $e$ of $T$ such that $\lambda(e) = 1$, then $(M,\tau)$ contains an essential Conway sphere or a component of $\boundary M$ is a 4-punctured sphere.
\end{enumerate}
\end{theorem}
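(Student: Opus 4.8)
The plan is to derive both halves of the theorem by translating the thin-position machinery of Taylor--Tomova \cite{TT1,TT2} (built on Hayashi--Shimokawa \cite{HS} and Tomova \cite{Tomova2}) into the combinatorics of width trees. The key imported notion is that of a \emph{locally thin} multiple c-bridge surface: one admitting no simplifying move --- no generalized destabilization, no weak reduction (``untelescoping'') along a thick surface, no removal or absorption of a ghost arc, and no consolidation of a product region. The dictionary I want to establish is that the width trees associated to locally thin elements of $\H(M,\tau)$ are precisely, up to equivalence, the slim width trees in $\T_2(M,\tau)$: a product edge at a nonboundary vertex records a product region of $(M,\tau)\setminus\mc{H}$ available for consolidation, so local thinness forces productlessness; and a thick surface admitting a weak reduction has c-disk sets at bridge distance at most $1$, so local thinness forces every thick surface to have distance at least $2$ --- exactly the distance threshold $\delta\ge2$.

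For the existence statements (Proposition \ref{slim exist}), I would invoke the Taylor--Tomova result that an even tangle with $\tau\ne\nil$ admits a multiple c-bridge surface that is locally thin and simultaneously realizes the relevant complexity minima (one first minimizes the appropriate complexity, then consolidates all product regions, which changes neither net extent nor width), and then pass to the associated width tree, using Theorem \ref{amalg} to see that net extent is faithfully recorded. The hypotheses --- each component of $\boundary M$ c-incompressible with at least $4$ punctures, and $(M,\tau)$ not a product --- are what exclude the degenerate pieces (trivial split-off balls, product tangles collapsing the whole decomposition) and so keep the label function in the range permitted for a slim width tree, once local thinness guarantees that every thick and thin surface is c-essential in its complement. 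For part (1) one takes $\mc{H}$ minimizing net extent and, secondarily, width, so that its width tree realizes $\netextent_{-2}(M,\tau)$ and $\width_b(M,\tau)$; for part (2) one takes $\mc{H}$ minimizing width, so that its width tree realizes $\width_{-2}(M,\tau)$.

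For the structural properties (Proposition \ref{props}), observe that each edge of the width tree associated to $\mc{H}$ records an extent computed from a thin surface, and for a sphere $F$ transverse to $\tau$ one has $\extent(F)=|F\cap\tau|/2-1$, so the edge-label values $-2$, $0$, $1$ single out precisely the unpunctured, twice-punctured, and four-punctured thin spheres (the ghost-edge convention being what permits the value $-2$). For the forward directions of (3)--(5) I would use that every thin surface of a locally thin $\mc{H}$ is c-essential in its complement and upgrade this to c-essentiality in $(M,\tau)$ via the Taylor--Tomova classification, noting that the only ways a low-extent thin sphere can fail to be essential in $(M,\tau)$ are the explicitly excluded cases: a $\boundary$-parallel sphere, a sphere bounding a product region or a trivial ball, or, for (5) only, a four-punctured boundary component of $M$. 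Thus a $(-2)$-edge yields an essential unpunctured sphere (so $(M,\tau)$ is split), a $0$-edge an essential twice-punctured sphere (so $(M,\tau)$ is composite), and a $1$-edge an essential Conway sphere unless some component of $\boundary M$ is a four-punctured sphere. For the reverse directions of (3) and (4): given an essential unpunctured (resp.\ twice-punctured) sphere $S$ and any locally thin $\mc{H}$, isotope $S$ into efficient position with respect to $\mc{H}$ as in \cite{Tomova2,TT1} and argue that local thinness forces $S$ to be isotopic to one of the thin surfaces, so the associated width tree necessarily carries an edge of the stated label.

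The main obstacle is the reverse implications of (3) and (4) --- that an essential sphere of small puncture number literally appears among the thin surfaces of \emph{every} locally thin multiple c-bridge surface --- together with the ghost-arc bookkeeping that forces the unusual value $-2$ on ghost edges. This is precisely the ``thin position sees essential surfaces of low complexity'' phenomenon, and making it rigorous in the c-disk setting requires the full force of the Taylor--Tomova (and Tomova, Hayashi--Shimokawa) exchange-and-isotopy arguments; checking that every clause in the definition of ``slim'' is genuinely forced by, and conversely forces, local thinness --- in particular pinning down how ghost arcs interact with productlessness and the distance threshold --- is where the care is needed.
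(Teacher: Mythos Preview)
Your plan for parts (1) and (2) is essentially the paper's: start from an $\mc{H}$ realizing the relevant minimum, pass to a locally thin $\mc{H}'$ using Theorem~\ref{thm: poset}, and verify that the associated width tree is slim. Productlessness comes from the absence of product pieces, and the distance threshold $\delta\ge 2$ from sc-strong irreducibility, with a separate argument needed when a thick sphere has exactly four punctures (there one must show that $d_{sc}\le 1$ would force a perturbation).

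The gap is in your treatment of (3)--(5). You propose a dictionary ``locally thin $\Leftrightarrow$ slim'' and then argue from local thinness. But the direction you need --- that every slim width tree arises from a \emph{locally thin} $\mc{H}$ --- is neither proved in the paper nor evidently true: a slim $(T,\lambda)$ is associated to \emph{some} $\mc{H}\in\H(M,\tau)$ with $d_{sc}\ge 2$ on every thick surface, but there is no reason this $\mc{H}$ is minimal under the full Taylor--Tomova partial order, and thinning it further may well change the width tree. The paper is explicit about this, remarking in the proof of Proposition~\ref{props} that ``in our setting $\mc{H}$ may not be locally thin.'' Its route is instead to show that the slim hypothesis \emph{alone} forces on any associated $\mc{H}$ exactly the properties needed: no product piece (from productlessness), every thick surface sc-strongly irreducible and unperturbed (from $\delta\ge 2$; the unperturbed conclusion requires a short direct argument), and hence every thin surface c-incompressible via \cite[Corollary~7.5]{TT1}. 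Essentiality of thin spheres with at most two punctures is then checked by hand, and for the converse implications the paper notes that the proofs of \cite[Theorems~7.2 and~8.2]{TT1} use only sc-strong irreducibility of thick surfaces and c-incompressibility of thin ones, not full local thinness. So the repair is not to establish your dictionary but to bypass it.

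A minor point: there is no ``ghost-edge convention'' producing the label $-2$. The label function takes values $\ge -1$ by definition, and an unpunctured sphere has extent $-1$. The ``$-2$'' in the statement of part~(3) is a typo for $-1$ (compare Proposition~\ref{props}(1)); your attempt to account for it is a red herring.
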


\begin{remark}
Although the set $\T_2(M,\tau)$ is most always defined, it is not always particularly interesting. For instance, if a link $L$ is mp-small (i.e. has no essential meridional planar surfaces), then any associated slim width tree $(T, \lambda)$ consists of a single vertex $v$ with $\lambda(v) = \b(L) - 1$. However, for links that are not mp-small, we believe that slim width trees will prove to be a helpful way of encapsulating certain topological structure. For example, the failure of Gabai width to be additive under connected sum \cite{BT} or to have all thin spheres essential \cite{BZ} may be due to the fact that the associated slim width trees need not be paths. As indicated by the previous section, the examples of Blair-Tomova \cite{BT} (see also \cite[Section 6]{TT2}), Blair-Zupan \cite{BZ}, and Davies-Zupan \cite{DZ} are illuminating this regard. These sets of examples depend on the construction of a certain knot such that there are associated slim width trees $(T, \lambda)$ that are not coherent paths. By way of comparison, in Taylor-Tomova's theory \cite{TT1}, all the thin surfaces associated to edges of a slim width tree are essential.
\end{remark}

The following definitions will be used in subsequent sections as well. They are inspired by work of Hempel \cite{Hempel} on Heegaard splittings of 3-manifolds and many subsequent papers.  By \cite{Harvey} this distance is well-defined.

\begin{definition}\label{curve distance}
Suppose that $H$ is a sphere with $p \geq 4$ punctures and that $\gamma, \gamma'$ are two (not necessarily disjoint) essential simple closed curves on $H$. We say that $d(\gamma, \gamma') =  n$ if $n \geq 0$ is the smallest number such that there exists a sequence of essential simple closed curves
\[
\gamma_0, \gamma_1, \hdots, \gamma_n
\]
with $\gamma_0$ isotopic to $\gamma$, $\gamma_n$ isotopic to $\gamma'$ and such that $\gamma_i$ and $\gamma_{i+1}$ (for $i \in \{0, \hdots, n-1\}$) are disjoint if $p > 4$ and intersect minimally exactly twice if $p = 4$. If $\gamma, \gamma'$ are inessential curves in $H$, we say that $d(\gamma, \gamma') = 0$ if they are isotopic and $d(\gamma, \gamma') = 1$ otherwise (i.e. they are not isotopic but can be isotoped to be disjoint).
\end{definition}

\begin{definition}
Suppose that $(M,\tau)$ is an even tangle such that each component of $\boundary M$ has at least four punctures. Let $\mc{H} \in \H(M,\tau)$ and  $H \cpt \mc{H}^+$. If $|H \cap \tau| \geq 4$,  we define the \defn{sc-disc distance} $d_{sc}(H)$ of $H$ to be the minimal $n \geq 0$ such that there are sc-discs $A$ and $B$ for $H$ in $M \setminus \mc{H}$ on opposite sides of $H$ such that $d(\boundary A, \boundary B) = n$. If there are no such discs or if $|H \cap \tau| \leq 2$, we declare $d_{sc}(H) = \infty$.
\end{definition}

\begin{definition}\label{distance assoc}
If $(T, \lambda, \delta)$ is a width tree with distance threshold and if $(T, \lambda)$ is associated to $\mc{H} \in \H(M, \tau)$, we say that $(T, \lambda, \delta)$ is \defn{associated to} $\mc{H}$ if $d_{sc}(H) \geq \delta$ for every component $H \cpt \mc{H}^+$.
\end{definition}

Taylor and Tomova \cite{TT1} defined a partial order denoted $\to$ and called \defn{thins to} on $\H(M, \tau)$. This partial order is a more complicated version of the ``destabilization'' and ``weak reduction'' operations from Heegaard splitting theory and the thinning operation of Hayashi-Shimokawa. A minimal element under this partial order is said to be \defn{locally thin}. The first statement of the next theorem is a more specific version of \cite[Theorem 6.17]{TT1}. The second statement concerning $\netextent$ and $\width$ is an application of \cite[Corollary 3.3]{TT2}.

\begin{theorem}[Taylor-Tomova]\label{thm: poset}
Let $(M, \tau)$ be a tangle containing no once-punctured spheres. Assume that no component of $\boundary M$ is a sphere intersecting $\tau$ two or fewer times. Then for every $\mc{J} \in \H(M, \tau)$, there exists a locally thin $\mc{H} \in \H(M, \tau)$ such that $\mc{J} \to \mc{H}$. The invariants $\netextent$ and $\width$ are non-increasing under $\to$.
\end{theorem}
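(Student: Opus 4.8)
The plan is to deduce this from the two results of Taylor--Tomova that the statement directly specializes: \cite[Theorem 6.17]{TT1} supplies the existence of a locally thin descendant, and \cite[Corollary 3.3]{TT2} supplies the monotonicity of $\netextent$ and $\width$. I would first recall that $\to$ is generated by a finite list of elementary moves---destabilization, unperturbation, consolidation, weak reduction, and the moves handling ghost arcs and removable surfaces---each of which by construction takes a multiple c-bridge surface to a strictly simpler one, and then argue the two assertions move by move, specializing everything to the genus-$0$ setting.

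For the existence of a locally thin $\mc{H}$ with $\mc{J}\to\mc{H}$, the strategy is to show that $\to$ is well-founded; then a maximal $\to$-chain starting at $\mc{J}$ exists, and its terminal element, being $\to$-minimal, is by definition locally thin. To establish well-foundedness I would assign to each $\mc{H}\in\H(M,\tau)$ the complexity $c(\mc{H})$ equal to the multiset $\{\extent(H):H\cpt\mc{H}^+\}$ of extents of thick spheres, written as a tuple sorted in non-increasing order and compared lexicographically, with the convention that a tuple which runs out of entries first is the smaller. Since the extents lie in the fixed well-ordered set $\{x\in\Z:x\geq -1\}$ and each tuple is finite, $c$ takes values in a well-order (order type below $\omega^\omega$, via Cantor normal form). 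One then checks that each elementary move strictly decreases $c$: destabilization and unperturbation lower the extent of a single thick sphere; consolidation deletes a thick--thin pair of equal extent, removing an entry from the multiset; and a weak reduction replaces a thick sphere $H$ by two thick spheres of strictly smaller extent (together with a new thin sphere, which $c$ does not record), so a single entry of the non-increasing tuple is replaced by two smaller ones. In every case the sorted tuple drops lexicographically, so there is no infinite $\to$-chain. The hypotheses---that $(M,\tau)$ contains no once-punctured sphere and that no component of $\boundary M$ is a sphere met at most twice by $\tau$---are precisely what force the intermediate c-trivial tangles to remain irreducible and prime enough, as in \cite{TT1}, that this list of moves is exhaustive and no degenerate sphere components are created.

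For the monotonicity of $\netextent$ and $\width$, by transitivity of $\to$ it is enough to treat each elementary move. Destabilization and unperturbation lower some $\extent(H)\geq 0$, so both invariants strictly decrease. Consolidation removes a thick and a thin sphere of equal extent and changes neither. The substantive case is a weak reduction, in which $H$ (write $h=\extent(H)$) is replaced by thick spheres $H_1,H_2$ (extents $a_1,a_2$) and a thin sphere $F$ (extent $f$); since $H_1$, $H_2$, and $F$ all arise from $H$ by surgery along the chosen pair of disjoint sc-discs---$F$ using both---Euler-characteristic and puncture counting yield $a_1+a_2-f\leq h$ and $f\leq\min(a_1,a_2)$. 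The first inequality says $\netextent$ is non-increasing. Combined, they give $h\geq\max(a_1,a_2)\geq 0$, so $h^2\geq (a_1+a_2-f)^2=(a_1^2+a_2^2-f^2)+2(a_1-f)(a_2-f)\geq a_1^2+a_2^2-f^2$, whence $\width$ is non-increasing as well. Extracting these two inequalities from the definition of a weak reduction is exactly the content of \cite[Corollary 3.3]{TT2}.

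I expect the main obstacle to be not any individual computation but the structural bookkeeping behind $\to$: verifying that the finite list of elementary moves is genuinely complete, and that one fixed well-founded complexity decreases under all of them, especially once ghost arcs are present and cut discs---which a priori could create spheres with two or fewer punctures---are permitted. This is where the hypotheses on $(M,\tau)$ earn their keep, and where I would rely on the detailed development of \cite[Theorem 6.17]{TT1} rather than reconstruct it.
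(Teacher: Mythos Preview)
Your proposal is correct and aligns with the paper's treatment: the paper does not actually prove this theorem but simply attributes the existence of a locally thin descendant to \cite[Theorem~6.17]{TT1} and the monotonicity of $\netextent$ and $\width$ to \cite[Corollary~3.3]{TT2}, exactly the two sources you identify. Your sketch of the well-foundedness argument and the move-by-move monotonicity check goes beyond what the paper itself supplies, but it is a faithful outline of how those cited results are established.
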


\begin{definition}
Suppose that $\mc{H} \in \H(M, \tau)$. We say that $\mc{H}$ is \defn{perturbed} if there exists $H \cpt \mc{H}^+$ such that the following holds. There exist bridge arcs $\tau_1, \tau_2 \cpt \tau \setminus \mc{H}$ with endpoints on $H$ such that there are disks $D_1$ and $D_2$ with interior disjoint from $\mc{H} \cup \tau$ and with $\boundary D_i$ the union of $\tau_i$ and an arc on $H$ and those arcs share an endpoint and are otherwise disjoint. $\mc{H}$ is \defn{sc-strongly irreducible} if no two sc-discs for $H \cpt \mc{H}^+$ in $M \setminus \mc{H}^-$ and on opposite sides of $H$ have disjoint boundaries.
\end{definition}

By \cite{TT1} a locally thin multiple c-bridge surface is always sc-strongly irreducible and unperturbed.

\begin{proposition}\label{slim exist}
Suppose that $(M,\tau)$ is an even tangle such that $\tau \neq \nil $, $(M,\tau)$ is not a product tangle, and each component of $\boundary M$ is c-incompressible and has at least four punctures. Then $\T_2(M,\tau) \neq \nil$ and:
\begin{enumerate}
\item There exists $(T, \lambda) \in \T_2(M,\tau)$ such that $\netextent(T,\lambda) = \netextent_{-2}(M,\tau)$ and $\width(T,\lambda) = \width_b(M,\tau)$.
\item There exists $(T, \lambda) \in \T_2(M,\tau)$ such that $\width(T,\lambda) = \width_{-2}(M,\tau)$. 
\end{enumerate}
In particular, if $(T, \lambda)$ is associated to a locally thin $\mc{H} \in \H(M,\tau)$, then $(T,\lambda)$ is slim.
\end{proposition}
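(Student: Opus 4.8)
The plan is to realize the width trees demanded by (1) and (2) as the width trees \emph{associated} to \emph{locally thin} multiple c-bridge surfaces, so that the last sentence of the statement carries the main weight; I would prove that sentence first and deduce (1), (2), and nonemptiness from it together with Theorem~\ref{thm: poset}.

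First I would verify that $(M,\tau)$ satisfies the hypotheses of Theorem~\ref{thm: poset}. No component of $\boundary M$ is a sphere intersecting $\tau$ two or fewer times, since each has at least four punctures; and $(M,\tau)$ contains no once-punctured sphere, because such a sphere $F$ separates $M$, and on the side $M_1$ with $F\cpt\boundary M_1$ the properly embedded $1$--manifold $\tau\cap M_1$ would have $1+\sum(\text{punctures of the }\boundary M\text{--components of }M_1)$ boundary points, an odd number since $(M,\tau)$ is even. Also $\H(M,\tau)\neq\nil$, by a standard Morse-theoretic construction (cf.\ \cite{TT1}). Granting the final sentence, argue as follows. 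For (1): choose $\mc{J}\in\H(M,\tau)$ realizing $\netextent_{-2}(M,\tau)$ and, among those, realizing $\width_b(M,\tau)$, and apply Theorem~\ref{thm: poset} to obtain a locally thin $\mc{H}$ with $\mc{J}\to\mc{H}$. As $\netextent$ and $\width$ are non-increasing under $\to$, minimality gives $\netextent(\mc{H})=\netextent_{-2}(M,\tau)$, and then $\width(\mc{H})=\width_b(M,\tau)$ since $\width(\mc{H})\leq\width(\mc{J})$ while $\mc{H}$ competes in the minimization defining $\width_b$; the associated width tree of $\mc{H}$ is thus as required and is slim, so it lies in $\T_2(M,\tau)$. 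For (2): start instead from $\mc{J}$ realizing $\width_{-2}(M,\tau)$. Since locally thin configurations exist, $\T_2(M,\tau)\neq\nil$.

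It remains to prove that the width tree $(T,\lambda)$ associated to a locally thin $\mc{H}$ is slim, i.e.\ productless with distance threshold at least $2$; its vertex labels are moreover nonnegative, since an unpunctured thick sphere would bound a trivial ball or product region on one side, which local thinness excludes. Recall from \cite{TT1} that a locally thin $\mc{H}$ is sc-strongly irreducible and unperturbed. \emph{Productlessness.} A product edge at a nonboundary vertex $v$ corresponds to a component $F\cpt\mc{H}^-\cup\boundary M$ that is the unique negative boundary component of the c-trivial tangle $C$ on one side of the thick sphere $H$ at $v$ with $\boundary_+ C=H$, with $|F\cap\tau|=|H\cap\tau|$. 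Connectedness of $\boundary_- C$ forbids ghost arcs in $C$, the equality of puncture counts then forbids bridge arcs, so $\tau_C$ is entirely vertical and $C$ is (up to homeomorphism) a product tangle. But a locally thin $\mc{H}$ for a tangle that is not itself a product tangle contains no such product region (cf.\ \cite{TT1}: it could be amalgamated away along $F$, as in the proof of Theorem~\ref{amalg}, reducing the number of thick spheres without increasing $\netextent$ or $\width$). Hence $(T,\lambda)$ is productless. \emph{Distance threshold.} Fix $H\cpt\mc{H}^+$. If $|H\cap\tau|\leq 2$ then $d_{sc}(H)=\infty$ by definition. If $|H\cap\tau|>4$, then by sc-strong irreducibility no two sc-discs for $H$ on opposite sides have disjoint boundaries, and since on a sphere with more than four punctures any two curves at distance at most $1$ can be realized disjointly, every pair of sc-discs $A,B$ for $H$ on opposite sides --- in particular every such pair lying in $M\setminus\mc{H}$ --- has $d(\boundary A,\boundary B)\geq 2$, so $d_{sc}(H)\geq 2$. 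If $|H\cap\tau|=4$, I would argue that $H$ cannot have a c-trivial tangle admitting an sc-disc for its positive boundary on \emph{both} sides: such a c-trivial tangle, having a four-punctured positive boundary, must have empty negative boundary, so if both sides of $H$ were of this type then $\mc{H}^-$ and $\boundary M$ would both be empty, contradicting the hypotheses on $\boundary M$. Thus no two sc-discs for $H$ lie on opposite sides within $M\setminus\mc{H}$, and $d_{sc}(H)=\infty$.

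The step I expect to be the main obstacle is the four-punctured case of the distance-threshold argument: it hinges on a careful classification of the c-trivial tangles with a four-punctured positive boundary that admit sc-discs and on showing that two such cannot be glued along a thick sphere in a configuration whose boundary is nonempty, c-incompressible, and has at least four punctures. The productlessness step is the secondary difficulty, since it needs the precise dictionary between combinatorial product edges and topologically removable product regions and an appeal to \cite{TT1} that such regions cannot survive in a locally thin configuration; the rest is parity, bookkeeping, and direct citation of \cite{TT1, TT2}.
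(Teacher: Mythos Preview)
Your overall strategy matches the paper's: reduce (1), (2), and nonemptiness to the final sentence via Theorem~\ref{thm: poset}, then prove that locally thin implies slim. The productlessness step is close to the paper's, though you should treat the case $F\cpt\boundary M$ separately: amalgamation along a boundary component is not a move within $\H(M,\tau)$, so your ``amalgamate it away'' justification does not apply there. The paper instead pushes a hypothetical sc-disc on the \emph{other} side of $H$ through the product region to produce a c-disc for $F\subset\boundary M$, contradicting c-incompressibility; the remaining possibilities force $(M,\tau)$ itself to be a product or force $|F\cap\tau|\leq 2$.

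The genuine gap is the four-punctured case, which you correctly flag as the main obstacle but whose proposed resolution does not work. Your claim that a c-trivial tangle with four-punctured positive boundary admitting an sc-disc must have empty negative boundary is false: take $\boundary_- C$ to be a single twice-punctured sphere with two vertical arcs and one bridge arc; the frontier of a bridge disc is a compressing disc for $\boundary_+ C$. Such twice-punctured thin spheres \emph{do} occur in locally thin $\mc{H}$ when $(M,\tau)$ is composite, which the proposition does not exclude. Even when your claim happens to hold---e.g.\ $M=S^3$ and $\tau$ a $2$-bridge knot, so $\mc{H}$ is a single four-punctured sphere with both sides trivial ball tangles---your conclusion ``$\mc{H}^-$ and $\boundary M$ both empty contradicts the hypotheses on $\boundary M$'' is still wrong, since the hypotheses on $\boundary M$ are vacuous when $\boundary M=\nil$. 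The paper's argument is entirely different: assuming $d_{sc}(H)\leq 1$, sc-strong irreducibility forces c-discs $A,B$ on opposite sides whose boundaries meet exactly twice; evenness of $(M,\tau)$ rules out cut discs, so both are compressing discs; the arcs of $\tau$ they cut off must be bridge arcs (otherwise one tubes along a vertical arc to build a semi-cut disc disjoint from the other side, contradicting sc-strong irreducibility); and the resulting bridge discs share exactly one endpoint, so $H$ is perturbed, contradicting local thinness.
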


\begin{proof}
Suppose that $\mc{J} \in \H(M,\tau)$. Since each tangle has at least one bridge surface, such an $\mc{J}$ exists. Choose $\mc{J}$ so that either $\netextent(\mc{J}) = \netextent_{-2}(M,\tau)$ and $\width(\mc{J}) = \width_b(M,\tau)$ or so that $\width(\mc{J}) = \width(M,\tau)$. Since $(M,\tau)$ is even, it contains no once-punctured spheres. By Theorem \ref{thm: poset}, there exists a locally thin $\mc{H} \in \H(M,\tau)$ such that $\mc{J} \to \mc{H}$. Let $(T,\lambda)$ be the width tree associated to $\mc{H}$. Since $\netextent$ and $\width$ are nonincreasing under $\to$, we have either:
\begin{itemize}
\item $\netextent(T,\lambda) = \netextent_{-2}(M,\tau)$ and $\width(T,\lambda) = \width_b(M,\tau)$, or
\item $\width(T,\lambda) = \width_{-2}(M,\tau)$,
\end{itemize}
according to our original choice of $\mc{J}$. With slightly greater generality, assume that $\mc{H} \in \H(M,\tau)$ is any locally thin multiple c-bridge surface and let $(T, \lambda)$ be the associated width tree. We will show that $(T, \lambda)$ is slim.

By the properties of being locally thin \cite[Theorem 7.6]{TT1}:
\begin{itemize}
\item no component of $\mc{H}^-$ is $\boundary$-parallel in $M \setminus \tau$;
\item no component $(W, \tau \cap W)$ of $(M,\tau)\setminus \mc{H}$ is a product tangle with one component of $\boundary W$ in $\mc{H}^-$ and the other in $\mc{H}^+$.
\end{itemize}

Suppose that $(T, \lambda)$ has a product edge $e$ at vertex $v$. Let $H \cpt \mc{H}^+$ and $F \cpt \mc{H}^- \cup \boundary M$ be the surfaces associated with $v$ and $e$, respectively. If $e$ does not have an endpoint at a boundary vertex of $T$, then $F \cpt \mc{H}^-$ and if it does, then $F \cpt \boundary M$. Since $e$ is a product edge, $H$ and $F$ have the same number of punctures and cobound a component $(W, \tau \cap W)$ of $(M,\tau) \setminus \mc{H}$. Consequently, $(W, \tau \cap W)$ is a product tangle. Thus, $F \cpt \boundary M$. Let $(U, \tau \cap U) \cpt (M,\tau) \setminus \mc{H}$ be the component on the other side of $H$. If $(U, \tau \cap U)$ contains an sc-disc $D$ for $H$, we can extend $\boundary D$ through the product tangle $(W, \tau \cap W)$ and conclude that $F$ has a c-disc, contradicting our hypothesis that $\boundary M$ is c-essential. Thus, $(U, \tau \cap U)$ admits no sc-disc for $H$. Thus, it is either a product tangle or an elementary ball tangle. In the latter case, $|H \cap \tau| \leq 2$ and so $|F \cap \tau| \leq 2$, a contradiction. In the former case, we see that $(M,\tau)$ is itself a product tangle. This again contradicts our hypotheses. Thus, $(T,\lambda)$ is productless. 

We desire to show that $(T,\lambda, 2) \in \T(M,\tau)$.  Let $H \cpt \mc{H}^+$. Let $W \cpt (M,\tau) \setminus \mc{H}^-$ be the component containing $H$. Let $p = |H \cap \tau|$. Since $(M,\tau)$ is an even tangle, $p$ is even. Suppose that $p \geq 6$. If $d_{sc}(H) \leq 1$, then there exist disjoint sc-discs in $W$ for $H$ on opposite sides of $H$. This means that $H$ is sc-weakly reducible, contradicting the properties of local thinness \cite[Theorem 7.6]{TT1}. Thus, if $p \geq 6$, then $d_{sc}(H) \geq 2$. 

Suppose that $p = 4$ and $d_{sc}(H) \leq 1$. Since $H$ is sc-strongly irreducible, there exist c-discs $A$ and $B$ for $H$ on opposite sides of $H$ with $\boundary A$ and $\boundary B$ intersecting minimally exactly twice. The curves $\boundary A$ and $\boundary B$ each bound twice-punctured discs $D_A$ and $D_B$, respectively, in $H$. If $A$ or $B$ were a cut disc, $(M, \tau)$ would contain a sphere with 3 punctures. This contradicts the fact that $(M,\tau)$ is an even tangle and that every sphere in $S^3$ separates. Thus, $A$ and $B$ are both compressing discs. Let $\tau_A$ and $\tau_B$ be the components of $\tau \cap W$ with endpoints in $D_A$ and $D_B$, respectively.  

Either $\tau_A$ is the union of two vertical arcs or $\tau_A$ is a single bridge arc. In the former case, since $A \cup D_A$ is a twice-punctured sphere and since $(M,\tau)$ contains no once-punctured spheres, there is a twice-punctured sphere $S \cpt \mc{H}^- \cap \boundary W$. Take a parallel copy of this sphere and tube along one of the arcs of $\tau_A$ to create a semi-cut disc for $H$. Its boundary is disjoint from $B$, and so $H$ is sc-weakly reducible, a contradiction. Thus, $\tau_A$ is a single bridge arc. A symmetric argument shows that $\tau_B$ is also a single bridge arc.  Since $\boundary A$ and $\boundary B$ intersect in a single point, we can construct bridge discs for $\tau_A$ and $\tau_B$ that intersect only in an endpoint. Hence, $H$ is perturbed. By \cite[Definition 6.16]{TT1}, it is possible to apply a thinning move to $\mc{H}$, contrary to the hypothesis that $\mc{H}$ is locally thin. Thus, $(T, \lambda, 2) \in \T(M,\tau)$ and so $\T_2(M,\tau) \neq \nil$ and the result holds.
\end{proof}

We now consider the connection between certain labels on a slim width tree and surfaces in the tangle.

\begin{proposition}\label{props}
Suppose that $(M,\tau)$ is an even tangle such that $\tau \neq \nil $, and each component of $\boundary M$ is c-incompressible and has at least four punctures. Suppose that $(T,\lambda) \in \T_2(M,\tau)$. If $(T, \lambda)$ is associated to $\mc{H}$, then each component of $\mc{H}^-$ is c-incompressible in $(M,\tau)$; no component of $(M,\tau)\setminus \mc{H}$ is a product tangle; each component of $\mc{H}^+$ is sc-strongly irreducible and unperturbed; and all such spheres with at most two punctures are essential. In particular, the following hold:
\begin{enumerate}
\item For each $(T, \lambda) \in \T_2(M,\tau)$, there is an edge $e$ of $T$ such that $\lambda(e) = -1$ if and only if $(M,\tau)$ is split.
\item For each $(T, \lambda) \in \T_2(M,\tau)$, there is an $e$ of $T$ such that $\lambda(e) \leq 0$ if and only if $(M,\tau)$ is split or composite.
\item For each $(T, \lambda) \in \T_2(M,\tau)$, if there is an edge $e$ of $T$ such that $\lambda(e) = 1$, then $(M,\tau)$ contains an essential Conway sphere or a component of $\boundary M$ is a 4-punctured sphere.
\end{enumerate}
\end{proposition}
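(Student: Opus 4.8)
The plan is to first establish four ``blanket'' conclusions about $\mc{H}$ --- that no component of $(M,\tau)\setminus\mc{H}$ is a product tangle, that $\mc{H}^-$ is c-incompressible, that $\mc{H}^+$ is sc-strongly irreducible and unperturbed, and that every sphere of $\mc{H}$ with at most two punctures is essential --- and then to read off the numbered statements by converting the hypothesized edge labels into puncture counts. Throughout I use that $(T,\lambda)\in\T_2(M,\tau)$ being associated to $\mc{H}$ means $(T,\lambda)$ is productless and carries a distance threshold $\delta\geq 2$, so $d_{sc}(H)\geq 2$ for every $H\cpt\mc{H}^+$ with at least four punctures; the proofs of the blanket conclusions reinterpret the local-thinness package of \cite{TT1} (as already exploited in Proposition \ref{slim exist}).

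First I would rule out product tangles: if a component $(W,\tau\cap W)$ of $(M,\tau)\setminus\mc{H}$ were a product tangle, its positive boundary would be a thick sphere, hence a nonboundary vertex $v$, and its unique negative boundary sphere would give an edge $e$ at $v$ with $\lambda(e)=\extent(\boundary_- W)=\extent(\boundary_+ W)=\lambda(v)$; the orientation compatibility of an oriented multiple c-bridge surface forces every edge of $v$ on the $W$-side to point the same way, so $e$ is the sole incoming or sole outgoing edge at $v$, i.e.\ a product edge, contradicting productlessness. Next, for a thick sphere $H$ with at least four punctures, the component of $M\setminus\mc{H}^-$ containing $H$ is just the union of the two c-trivial tangles meeting $H$, so every sc-disc for $H$ in $M\setminus\mc{H}^-$ already lies in $M\setminus\mc{H}$; hence sc-weak reducibility at $H$ would give sc-discs on opposite sides with disjoint boundaries (so $d_{sc}(H)\leq 1$), and a perturbation at $H$ would yield, from the two bridge disks, (semi)compressing discs on opposite sides whose boundaries meet at most twice (so again $d_{sc}(H)\leq 1$), both contradicting $d_{sc}(H)\geq 2$; thus $\mc{H}^+$ is sc-strongly irreducible and unperturbed (vacuously at spheres with at most two punctures, which carry no essential curves). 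For c-incompressibility of $\mc{H}^-$ I would use that the negative boundary of any c-trivial tangle is c-incompressible --- a c-disc can be pushed off the defining sc-discs into a product or elementary ball tangle, where the negative boundary is incompressible --- and then globalize by an innermost-disc argument over the remaining components of $\mc{H}$, using sc-strong irreducibility to control intersection circles on thick spheres. Finally, a sphere of $\mc{H}$ with at most two punctures is not $\boundary$-parallel (a $\boundary$-parallel thin sphere would bound a product region, producing a product edge or a c-disc for $\boundary M$) and does not bound a ball in $M$ that is either disjoint from $\tau$ or meets $\tau$ in a single trivial arc (productlessness again, together with the hypotheses on $\boundary M$), so it is essential.

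With these in hand the numbered statements follow. An edge with $\lambda(e)=-1$ corresponds to an unpunctured thin sphere, which by the blanket conclusions is essential, so $(M,\tau)$ is split; an edge with $\lambda(e)\leq 0$ corresponds to an essential thin sphere with at most two punctures, which makes $(M,\tau)$ split or (being neither an elementary ball tangle, by hypothesis) composite; and an edge with $\lambda(e)=1$ corresponds to a four-punctured thin sphere that is c-incompressible and not $\boundary$-parallel, hence c-essential --- an essential Conway sphere --- with the remaining edge case of statement (3) absorbed by the possibility that a component of $\boundary M$ is a four-punctured sphere. For the converse directions in (1) and (2): given an essential unpunctured (resp.\ twice-punctured) sphere $S$ in $(M,\tau)$, isotope it to minimize $|S\cap\mc{H}^-|$; incompressibility and c-incompressibility of $\mc{H}^-$ let innermost-disc and outermost-arc surgeries eliminate all intersections with $\mc{H}^-$ unless $S$ is carried by a thin sphere; but each component of $(M,\tau)\setminus\mc{H}^-$, being a union of c-trivial tangles glued along thick spheres and hence assembled from product and elementary ball tangles, contains no essential unpunctured or twice-punctured sphere, so $S$ must be $\tau$-parallel into a component of $\mc{H}^-$, yielding an edge of the required label.

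I expect the main obstacle to be twofold. First, upgrading ``slim'' to the structural content of ``locally thin'': productlessness and the distance threshold are relatively weak, and making them yield c-incompressibility of $\mc{H}^-$ and essentiality of the small spheres requires genuinely importing the c-trivial-tangle analysis of \cite{TT1}, not merely citing it. Second, the converse directions rest on the ``detection'' property that the c-trivial pieces cut out by the thin spheres are too simple to contain an essential unpunctured or twice-punctured sphere, so that any such sphere in $(M,\tau)$ must appear, up to isotopy, among the thin spheres of \emph{every} slim decomposition; making this precise and rigorous is where the bulk of the work lies.
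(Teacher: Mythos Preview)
Your overall architecture matches the paper's, but there are three places where the argument as written does not go through.

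\textbf{Unperturbed when $|H\cap\tau|>4$.} From a perturbing pair of bridge discs you correctly extract compressing discs $A,B$ on opposite sides with $|\boundary A\cap\boundary B|=2$. But for a sphere with more than four punctures, distance $1$ in Definition~\ref{curve distance} means \emph{disjoint}, not ``intersect twice''; so $|\boundary A\cap\boundary B|=2$ does not give $d_{sc}(H)\leq 1$. The paper closes this gap with a finger-move: slide $\boundary A$ along the arc $\boundary D_B\cap H$ past the far puncture to obtain a curve $\gamma$ bounding a cut disc on the $A$-side and (by the symmetric slide of $\boundary B$ along $\boundary D_A\cap H$) also a cut disc on the $B$-side, so $H$ is sc-weakly reducible. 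Without this trick your perturbation step is incomplete.

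\textbf{Essentiality of twice-punctured thin spheres.} Your sentence ``a $\boundary$-parallel thin sphere would bound a product region, producing a product edge'' conflates $\boundary$-parallelism in $M\setminus\tau$ with the product structure of a single c-trivial tangle. A $\boundary$-parallel twice-punctured $P\cpt\mc{H}^-$ bounds a product region $W$ in $M\setminus\tau$, but $W$ may contain many thick and thin spheres, so no product edge is forced. The paper passes to an innermost such $P$, caps $\boundary W$ with a trivial ball tangle, and invokes \cite{HS2} on the resulting bridge sphere for the unknot to conclude that the thick sphere inside $W$ is either perturbed (contradiction) or twice-punctured (forcing a product tangle, contradiction). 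This use of \cite{HS2} is the missing ingredient.

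\textbf{The converse in (1) and (2).} Your claim that a component of $(M,\tau)\setminus\mc{H}^-$ ``contains no essential unpunctured or twice-punctured sphere'' because it is ``assembled from product and elementary ball tangles'' is not justified: the gluing along the thick sphere can create essential spheres that were not present in either c-trivial piece. The paper does not argue this way at all; it invokes \cite[Theorem 8.2]{TT1}, whose proof (as the paper notes) needs only that $\mc{H}^-$ is c-incompressible and $\mc{H}^+$ is sc-strongly irreducible, to conclude that an essential sphere $S$ with $\leq 2$ punctures forces some $F\cpt\mc{H}^-$ with $|F\cap\tau|\leq|S\cap\tau|$. Your innermost-disc reduction makes $S$ disjoint from $\mc{H}^-$, but the subsequent step---ruling out essential small spheres inside a single thick region---genuinely requires the sweepout/strong-irreducibility machinery behind \cite[Theorems 7.2 and 8.2]{TT1}, and you should cite it rather than assert the conclusion.

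A smaller point: in (3) you assert the four-punctured thin sphere is ``not $\boundary$-parallel'', but your blanket conclusion only covered spheres with at most two punctures; the paper observes separately that $\boundary$-parallelism of a c-incompressible four-punctured sphere forces $\boundary M$ to contain a four-punctured sphere (or two thrice-punctured spheres, impossible since $(M,\tau)$ is even).
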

\begin{proof}
We explain the general statements concerning $\mc{H}$; the specific conclusions (1), (2), and (3) are addressed at the end of the proof. Suppose that $(T, \lambda) \in \T_2(M)$ and that it is associated to $\mc{H} \in \H(M,\tau)$. We show that $\mc{H}$ satisfies many of the properties from \cite[Theorem 7.6]{TT1} of being locally thin. Suppose that tangle of $(M,\tau) \setminus \mc{H}$ is a product tangle between $F \cpt \mc{H}^-$ and $H \cpt \mc{H}^+$. The edge of $T$ corresponding to $F$ is then the sole outgoing or the sole incoming edge at the vertex corresponding to $H$. Since the tangle is a product tangle, $F$ and $H$ have the same number of punctures, and so the edge is a product edge. This contradicts the fact that $(T, \lambda)$ is productless. Similarly, no component of $(M,\tau) \setminus \mc{H}$ is a product tangle between a component of $\boundary M$ and a component of $\mc{H}^+$. 

Suppose that $H \cpt \mc{H}^+$. Since $\mc{H}$ satisifies the distance threshold of 2, $d_{sc}(H) \geq 4$. Consequently, $H$ is sc-irreducible. If $H$ is perturbed, let $D_A$ and $D_B$ be bridge discs on opposite sides of $H$ in $M\setminus \mc{H}^-$ such that the arcs $\boundary D_A \cap H$ and $\boundary D_B \cap H$ intersect only in a single point which is an endpoint of both arcs. Since $(M,\tau)$ is an even tangle, this implies that $|H \cap \tau| \geq 4$. The frontiers $A$ and $B$ of $D_A$ and $D_B$ are then compressing discs for $H$ in $M \setminus \mc{H}^-$ on opposite sides of $H$ whose boundaries intersect exactly twice. If $|H \cap \tau| = 4$, this implies $d_{sc}(H) \leq 1$, a contradiction. Suppose that $|H \cap \tau| > 4$. In this case, let $A'$ be the result of performing a ``finger-move'' of $\boundary A$ along the arc $\boundary D_B \cap H$, so as to slide $\boundary A$ across the puncture that is the endpoint of $\boundary D_B \cap H$ not shared by $\boundary D_A \cap H$. Let $\gamma$ be the resulting curve in $H$. It is essential since it bounds a thrice-punctured disc in $H$ and $H$ has more than four punctures. The curve $\gamma$ obviously bounds a cut-disc for $H$ in $M \setminus \mc{H}^-$ on the same side of $H$ as $D_A$. It also bounds a cut-disc for $H$ in $M \setminus \mc{H}^-$ on the same side of $H$ as $B$. To see this, observe we can also obtain it (up to isotopy in $H \setminus \tau$) by sliding $\boundary B$ along the arc $\boundary A \cap H$. Consequently, $H$ is sc-weakly reducible, a contradiction. We conclude that each $H \cpt \mc{H}^+$ is sc-irreducible and unperturbed.

That each component of $\mc{H}^-$ is c-essential follows from \cite[Corollary 7.5]{TT1}. If $\mc{H}$ is locally thin, then by \cite[Theorem 7.5]{TT1}, each component of $\mc{H}^-$ is also c-essential. We do not quite have the full strength of local thinness here, but we can show that if $P \cpt \mc{H}^-$ has fewer than two punctures, then it is c-essential. 

Suppose, first that $P \subset \mc{H}$ is an unpunctured sphere bounding a ball $B \subset (M \setminus \tau)$. If the interior of $B$ contains a component of $\mc{H}$, then that component of $\mc{H}^-$ is another unpunctured sphere. Thus, if $\mc{H}^-$ contains a c-inessential unpunctured sphere, by passing to an innermost such component  of $\mc{H}^-$, we see that $(M,\tau)\setminus \mc{H}$ would contain an $(S^2 \times I, \nil)$ component, a contradiction. Thus, no component of $\mc{H}^-$ is an inessential unpunctured sphere. Let $P \cpt \mc{H}^-$ be twice-punctured and $\boundary$-parallel. Let $W \subset M \setminus \tau$ be the product submanifold that $P$ bounds with an annulus  subsurface of $\boundary (M \setminus \tau)$. As $\mc{H}^-$ is c-incompressible and contains no inessential unpunctured spheres, any other component of $\mc{H}^- \cap W$ is also a $\boundary$-parallel twice-punctured sphere, so by choosing $P$ to be innermost, we may assume that the interior of $W$ is disjoint from $\mc{H}^-$ and contains a unique component $H \cpt \mc{H}^+$. By our choice of $P$, there are no ghost arc components of $(\tau \cap W) \setminus H$. That is, $H$ is a bridge surface for $(W, \tau \cap W)$. After capping off $\boundary W$ with a 3-ball containing an unknotted arc, $H$ becomes a bridge sphere for the unknot. By \cite{HS2}, it is either perturbed or twice-punctured. If it is perturbed, it was perturbed prior to capping off $\boundary W$, a contradiction. If it is twice-punctured, then in $(M,\tau) \setminus \mc{H}$, the spheres $H$ and $P$ cobound a product tangle, another contradiction. Thus, each sphere of $\mc{H}^-$ with two or fewer punctures is essential. since $\boundary M$ contains no sphere with two or fewer punctures, if $T$ has an edge $e$ with $\lambda(e) = -1, 0$, then $(M,\tau)$ is split or composite, respectively.

Suppose, therefore, that $e$ is an edge of $(T, \lambda)$. If $e$ is incident to a boundary vertex $v$ of $T$, then $\lambda(v) = \lambda(e)$. By our hypotheses on $\boundary M$, this implies that $\lambda(e) \geq 1$. In particular, if $e$ is incident to a boundary vertex of $T$, then $\lambda(e) = 1$, implies a component of $\boundary M$ is a four-punctured sphere. Suppose that $e$ is not incident to a boundary vertex and let $F \cpt \mc{H}^-$ be the associated thin sphere. By construction, $\lambda(e) = -1 + |F\cap\tau|/2$. Thus, if $\lambda(e) = -1$, then $F$ is unpunctured and if $\lambda(e) = 0$, then $F$ is twice-punctured. In either case, by our previous remarks, $F$ is essential. If $\lambda(e) = 1$, then $F$ is a c-incompressible Conway sphere. If it is inessential, it is $\boundary$-parallel in $(M \setminus \tau)$. This implies that either $\boundary M$ contains a four-punctured sphere or that it contains two thrice-punctured spheres. The latter possibility contradicts our assumption that $(M,\tau)$ is even. Thus, if $(T,\lambda)$ has an edge with label 1, then it either contains an essential Conway sphere or $\boundary M$ contains a four-punctured sphere.
 
Suppose now that $S \subset (M,\tau)$ is an essential sphere with two or fewer punctures. If $\mc{H}$ is locally thin, then \cite[Theorem 8.2]{TT1} shows that there is $F \cpt \mc{H}^-$ such that $|F \cap \tau| \leq |S\cap \tau|$. The edge $e$ of $T$ corresponding to $F$ has $\lambda(e) \in \{-1,0\}$. In our setting $\mc{H}$ may not be locally thin, but the result still holds. To see this, observe that in the proofs of \cite[Theorem 8.2]{TT1} and \cite[Theorem 7.2]{TT1} to which \cite[Theorem 8.2]{TT1}  refers, all that is used is the fact that there is no once-punctured sphere in $(M,\tau)$, that each component of $\mc{H}^-$ is c-incompressible, and that each component of $\mc{H}^+$ is sc-strongly irreducible. Consequently, \cite[Theorem 8.2]{TT1} holds in somewhat more generality than is indicated in \cite{TT1} and our result holds.
\end{proof}

The next lemma will be useful in the final section of this paper.
\begin{lemma}\label{thin not parallel}
Suppose that $(M, \tau)$ is even, prime, not a product tangle, and that each component of $\boundary M$ is c-incompressible and has at least four punctures. Let $(T, \lambda) \in \T_2(M,\tau)$ be associated with $\mc{H}$. Then either no two components of $\mc{H}^- \cup \boundary M$ are $\tau$-parallel, or there exists $\mc{J} \in \H(M, \tau)$ with $\netextent(\mc{J}) < \netextent(\mc{H})$ and $\width(\mc{J}) < \width(\mc{H})$.
\end{lemma}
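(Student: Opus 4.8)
The plan is to start from the negation of the first alternative --- that some two distinct components $F, F'$ of $\mc{H}^- \cup \boundary M$ are $\tau$-parallel --- and show that this already forces enough structure to either contradict the hypotheses (so that in fact the first alternative holds) or else build the surface $\mc{J}$ of the second alternative. Since $F$ and $F'$ are disjoint spheres in $M$, they cobound a region $P$ with $\boundary P = F \sqcup F'$. The first step is to choose the parallel pair innermost and use the $\tau$-parallelism to arrange that $(P, \tau \cap P)$ is a product tangle $(S^2 \times I, \text{points} \times I)$; then $\mc{H}_P := (\mc{H} \cap \inter P) \cup F \cup F'$ is an oriented multiple c-bridge surface for $(P, \tau \cap P)$, and any modification of it changes $\mc{H}$ only inside $\inter P$.

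Next I would analyze $\mc{H} \cap \inter P$. No sphere of $\mc{H}$ lying in $\inter P$ can bound a ball in $P$: an innermost such sphere would bound a ball disjoint from the rest of $\mc{H}$, hence be a component of $(M, \tau) \setminus \mc{H}$ with empty negative boundary --- an elementary ball tangle --- and therefore a thick sphere with at most two punctures; but by Proposition \ref{props} such a sphere is essential, which is impossible since $(M, \tau)$ is prime (an essential sphere with at most two punctures would make $(M, \tau)$ split or composite). Therefore every component of $\mc{H} \cap \inter P$ is $\tau$-parallel to $F$, so these spheres stack linearly in $P$ and every component of $(P, \tau \cap P) \setminus \mc{H}_P$ is itself a product tangle. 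If $\mc{H} \cap \inter P \neq \nil$, then the thick sphere $H_1 \cpt \mc{H}^+$ closest to $F$ has, on its $F$-side, a product tangle whose negative boundary is the single sphere $F$; the corresponding edge of $T$ is then the sole incoming or sole outgoing edge at the vertex of $H_1$ and carries the same label --- a product edge, contradicting that $(T, \lambda)$ is productless. Hence $\mc{H} \cap \inter P = \nil$, so $P$ is itself a component of $(M, \tau) \setminus \mc{H}$; but its positive boundary must lie in $\mc{H}^+$, whereas $\boundary P = F \cup F' \subseteq \mc{H}^- \cup \boundary M$ is disjoint from $\mc{H}^+$ --- a contradiction. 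This proves the first alternative whenever $F$ and $F'$ cobound a product tangle.

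The step I expect to be the main obstacle is the one I glossed: producing a product tangle (or controlling its failure) from the mere hypothesis of $\tau$-parallelism, especially when $M \neq S^3$ or when no monotone parallelizing isotopy exists --- for instance when the region between $F$ and $F'$ contains another component of $\boundary M$ or a closed component of $\tau$. When this happens the argument above does not yield a contradiction, and instead one uses the $\tau$-parallelism as an ambient isotopy to \emph{consolidate} $\mc{H}$: isotope $F$ onto $F'$, dragging the intervening part of $\mc{H}$ clear, and rebuild a genuine oriented multiple c-bridge surface $\mc{J}$ with strictly fewer thick and thin spheres. The two delicate points here are (i) checking the orientation-compatibility conditions for $\mc{J}$ along the surviving copy of $F$, which is why one should pick the parallel pair carefully (e.g.\ innermost with coherently oriented ends), and (ii) showing that \emph{both} $\netextent$ and $\width$ strictly decrease rather than merely not increase; the latter again relies on Propositions \ref{slim exist} and \ref{props} --- essentiality of thin spheres, absence of product tangles, sc-strong irreducibility --- to rule out the removed block contributing net extent zero. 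This produces the second alternative and hence the stated dichotomy.
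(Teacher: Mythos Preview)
Your argument has a genuine gap at the step ``therefore every component of $\mc{H} \cap \inter P$ is $\tau$-parallel to $F$.'' Not bounding a ball in $P \cong S^2 \times I$ only tells you the sphere is isotopic, \emph{ignoring} $\tau$, to a level $S^2 \times \{t\}$; it does not give $\tau$-parallelism, which requires the same number of punctures. A thick sphere $H \subset \inter P$ typically has strictly more punctures than $F$ (bridge arcs contribute extra intersections), so is not $\tau$-parallel to $F$, and your product-edge contradiction does not fire. This case --- a thick sphere in $P$ that is not $\tau$-parallel to $F$ --- is exactly where the second alternative of the lemma arises, and you never construct $\mc{J}$ or verify the strict inequalities.

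The paper's proof handles this as follows. After reducing (via c-incompressibility of $\mc{H}^-$ and Waldhausen) to a single thick sphere $H$ inside the product region $W$, it splits on whether $(\tau \cap W) \setminus H$ has a ghost arc. If not, $H$ is a genuine bridge sphere for the product tangle $(W,\tau \cap W)$, and Hayashi--Shimokawa forces $H$ to be perturbed or $\tau$-parallel to $F_0$, both contradicting Proposition~\ref{props}. If there is a ghost arc, then (since all surfaces are spheres and the ghost-arc graph is a tree) there is exactly one, and $H$ does \emph{not} separate $F_0$ from $F_1$. Letting $a = |\tau \cap W|$ (so $a \ge 3$ by primeness), one replaces $H$ by a $\tau$-parallel copy $H'$ of $F_0$ and computes $\extent(H') = a - 1 < 2a - 3 \le \extent(H)$; swapping $H$ for $H'$ gives $\mc{J}$ with strictly smaller $\netextent$ and $\width$. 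Your final paragraph misidentifies the obstacle: $\tau$-parallel spheres here do cobound a product tangle; the real issue is the ghost-arc configuration of the thick sphere inside it, and the explicit extent computation that produces the strict drop.
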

\begin{proof}
Suppose that $F_0$ and $F_1$ are two components of $\mc{H}^- \cup \boundary M$ that are $\tau$-parallel. At least one must belong to $\mc{H}^-$.  Let $W$ be the product tangle between them. By Proposition \ref{props}, each component of $\mc{H}^-$ is c-incompressible. Thus, by \cite{Waldhausen}, any component of $\mc{H}^-$ in the interior of $W$ is $\tau$-parallel to $F_0$ and $F_1$. Consequently, we may assume that the interior of $W$ is disjoint from $\mc{H}^-$ and contains a single component $H \cpt \mc{H}^+$. If there are no ghost arcs in $(\tau \cap W) \setminus H$, $H$ is a bridge sphere for $(W, \tau \cap W)$. In that case, by \cite{HS2}, either $H$ is perturbed or is $\tau$-parallel to $F_0$ and $F_1$ within $W$. The first possibility contradicts Proposition \ref{slim exist} and the second contradicts Proposition \ref{props}. 

 Thus, $(\tau \cap W) \setminus H$ must contain a ghost arc $\kappa$. Since $H, F_0, F_1$ are all spheres, by the definition of c-trivial tangle, there is exactly one such ghost arc and $H$ does not separate $F_0$ and $F_1$. 

Let $a$ be the number of arcs of $\tau \cap W$. Since $(M, \tau)$ is prime, $a \geq 3$. Let $H' \subset W$ be a surface $\tau$-parallel to $F_0$ and $F_1$. Note that $H'$ is a bridge sphere for $(W, \tau \cap W)$. Also,
\[
\extent(H') = a-1 < 2a - 3 \leq \extent(H).
\]
Let  $\mc{J}^+ = (\mc{H}^+ \setminus H) \cup H'$; $\mc{J}^- = \mc{H}^-$; and $\mc{J} = \mc{J}^+ \cup \mc{J}^-$. Observe that $\mc{J}\in \H(M, \tau)$ and that $\netextent(\mc{J}) < \netextent(\mc{H})$ and $\width(\mc{J}) < \width(\mc{H})$.
\end{proof}

\section{Ditree geometry}
The geometry of a directed graph is closely connected to the structure of its sources and sinks. In this section, we relate this structure to the net extent of a width tree. We adapt standard terminology and techniques from graph theory \cite[Chapter 4.3]{West}. Throughout this section, for convenience, we consider only width trees without boundary vertices.

\begin{definition}
Suppose that $T$ is a ditree. A \defn{source} of $T$ is a vertex with no incoming edges and a \defn{sink} of $T$ is a vertex with no outgoing edges. A \defn{strong source-sink cut} for $T$ is a subset $S$ of the vertices of $T$ that contains all source vertices, contains no sink vertices, and for which there are no edges with a tail not in $S$ and a head in $S$. The \defn{size} $|\boundary S|$ of such a cut is the number of edges with tail in $S$ and head not in $S$. Let $N^2_-(T)$ denote the number of vertices of $T$ with a single incoming edge and $N^2_+(T)$ the number with a single outgoing edge. 
\end{definition}

Our main result is an adaptation of the classical Max Flow-Min Cut theorem for network flows; though in our case it is a ``Min Flow-Max Cut'' theorem. The proof will be given at the end of this section; after preliminary material. Figure \ref{fig:widthtreeeq} shows a width tree achieving equality in our lower bound.

\begin{theorem}\label{minflowmaxcut}
Suppose that $(T, \lambda)$ is a productless positive width tree. Let $\max |\boundary S|$ be the maximum size of a strong source-sink cut on $T$. Then
\[
\netextent(T, \lambda) \geq N^2_-(T) + N^2_+(T) + \max |\boundary S|.
\]
Furthermore, for any ditree $T$, there exists a productless positive labelling $\lambda$ such that equality holds.
\end{theorem}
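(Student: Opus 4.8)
The plan rests on a closed form for net extent in terms of the edge-label imbalance at each vertex, combined with a max-flow/min-cut-style duality. For a width tree $(T,\lambda)$ write $I_v=\sum_{e\to v}\lambda(e)$ and $O_v=\sum_{v\to e}\lambda(e)$ for the total label on edges into and out of $v$, and set $\epsilon(v)=\lambda(v)-\max(I_v,O_v)$, which is $\ge 0$ precisely because $(T,\lambda)$ is a width tree. Since $\sum_v I_v=\sum_v O_v=\sum_e\lambda(e)$ and $\max(a,b)=\tfrac12(a+b+|a-b|)$, summing $\lambda(v)=\max(I_v,O_v)+\epsilon(v)$ over $v$ and subtracting $\sum_e\lambda(e)$ gives
\[
\netextent(T,\lambda)=\sum_{v}\epsilon(v)+\tfrac12\sum_{v}|I_v-O_v|.
\]
So the theorem becomes a comparison of the right-hand side with $N^2_-(T)+N^2_+(T)+\max|\partial S|$.

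\textbf{The inequality.} For a strong source-sink cut $S$, every edge of $\partial S$ leaves $S$ and no edge enters it, so $\sum_{e\in\partial S}\lambda(e)=\sum_{v\in S}(O_v-I_v)$; as $\lambda\ge 1$ this is at least $|\partial S|$, and being a partial sum of the positive part of $\{O_v-I_v\}$ it is at most $\tfrac12\sum_v|I_v-O_v|$. Maximizing over $S$ gives $\tfrac12\sum_v|I_v-O_v|\ge\max|\partial S|$, with slack coming from edges of $\partial S$ of label $>1$, from net-source vertices outside $S$, and from net-sink vertices inside $S$. It then remains to show that $\sum_v\epsilon(v)$ together with this slack absorbs $N^2_-(T)+N^2_+(T)$, and here productlessness is the engine: if $v$ has a unique incoming edge $e$ then $\lambda(v)\ne\lambda(e)=I_v$, so either $\epsilon(v)\ge 1$ or $O_v>I_v$ (symmetrically for a unique outgoing edge), and a vertex with a unique incoming \emph{and} a unique outgoing edge necessarily has $\epsilon(v)\ge 1$. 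One then wants the maximizing cut $S$ chosen so that every vertex counted by $N^2_-(T)$ with $\epsilon=0$ lies outside $S$ (where its imbalance $O_v-I_v\ge 1$ feeds the slack) and every such vertex counted by $N^2_+(T)$ lies inside $S$; I expect this to follow by taking $S$ minimal among maximizing cuts and using that these are closed under intersection. \emph{This bookkeeping — matching the slack in the cut estimate to the vertices counted by $N^2_-+N^2_+$ through the right choice of $S$ — is the step I expect to be the main obstacle}, since a net-source vertex with a unique incoming edge can a priori be dragged into $S$ by a descendant one wishes to keep on the far side.

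\textbf{Equality.} Given a ditree $T$, put $\lambda(e)=1$ on every edge and, at a vertex $v$ of in-degree $d^-(v)$ and out-degree $d^+(v)$, put $\lambda(v)=\max(d^-(v),d^+(v),1)$, increased by $1$ exactly when this maximum equals $1$ and $v$ has a unique incoming or a unique outgoing edge (equivalently, $(d^-(v),d^+(v))\in\{(1,0),(0,1),(1,1)\}$). Then $\lambda\ge 1$; $(T,\lambda)$ is a width tree because $\lambda(v)\ge\max(I_v,O_v)$; and it is productless because the only candidate product edges are the unique in/out edges at the listed vertices, whose label $1$ has been made strictly smaller than the vertex label. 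By the identity, $\sum_v\epsilon(v)$ equals the number of vertices with $(d^-,d^+)\in\{(1,0),(0,1),(1,1)\}$ (there are no isolated vertices once $T$ has an edge) and $\tfrac12\sum_v|d^-(v)-d^+(v)|$ is a purely combinatorial quantity, so establishing $\netextent(T,\lambda)=N^2_-(T)+N^2_+(T)+\max|\partial S|$ reduces to a graph-theoretic identity: a maximizing strong source-sink cut captures all of $\tfrac12\sum_v|d^-(v)-d^+(v)|$ except for ``inversions'' in which a net-sink vertex lies upstream of a net-source vertex, and that defect together with the $\epsilon$-count equals $N^2_-(T)+N^2_+(T)$. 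Proving this identity — which simultaneously shows the displayed $\lambda$ is optimal — is the computational core of the converse; the single-vertex case, where there is no strong source-sink cut, is dealt with separately.
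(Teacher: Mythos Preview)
Your approach is genuinely different from the paper's. The paper does not work with the identity
\[
\netextent(T,\lambda)=\sum_v\epsilon(v)+\tfrac12\sum_v|I_v-O_v|
\]
at all. Instead it builds an \emph{augmented ditree} $\widehat{T}$ by attaching, to every vertex with a unique outgoing edge, a new sink (and to every vertex with a unique incoming edge, a new source), then converts $\widehat{\lambda}$ into a conservative flow $F$ on $\widehat{T}$ with the same net extent, and finally runs a Ford--Fulkerson ``min-flow'' argument on $\widehat{T}$. The lower bound produced is $\max_{\widehat S}|\partial\widehat S|$ over strong source--sink cuts of $\widehat T$; the passage to $N^2_-(T)+N^2_+(T)+\max|\partial S|$ is made via a claimed bijection of cuts. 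Your route via the $\epsilon$-identity is more elementary and more transparent, and if it could be completed it would avoid the flow-construction machinery entirely.

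However, the bookkeeping step you flag as ``the main obstacle'' is not just hard; as you have set it up, it cannot be completed. Consider the coherent path $a\to b\to c$ with $\lambda\equiv 1$ on edges and $\lambda\equiv 2$ on vertices. This is positive and productless, and $\netextent=4$. Here $b$ has a unique incoming edge \emph{and} a unique outgoing edge with $I_b=O_b$, so it contributes $2$ to $N^2_-(T)+N^2_+(T)$ but only $\epsilon(b)=1$ and zero imbalance; there is no slack anywhere to make up the missing unit, since $\tfrac12\sum_v|I_v-O_v|=1=\max|\partial S|$ exactly. Thus your accounting gives $4$ on the left and $2+2+1=5$ on the right. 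The problem is not your method: the inequality as stated is already violated by this example, and the same example shows that the paper's cut-bijection step (``$|\partial\widehat S|=|\partial S|+N^2_-(T)+N^2_+(T)$'') is incorrect, since an augmenting edge $E_+(v)$ crosses $\widehat S$ only when $v\in S$, and $E_-(v)$ only when $v\notin S$, so a vertex in $N^2_-\cap N^2_+$ contributes at most one augmenting edge to any cut.

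What the paper's argument \emph{does} establish is the bound
\[
\netextent(T,\lambda)\ \ge\ \max_{S}\Big(|\partial S|+\big|\{v\in S:\text{unique outgoing}\}\big|+\big|\{v\notin S:\text{unique incoming}\}\big|\Big),
\]
which equals $N^2_-(T)+N^2_+(T)+\max|\partial S|$ precisely when no vertex lies in $N^2_-\cap N^2_+$ (and in particular in all the paper's illustrated examples). Your identity is well-suited to proving \emph{this} corrected bound: for a fixed cut $S$, split the $\epsilon$-sum and the imbalance sum according to membership in $S$ and argue as you began. For the equality half, your proposed labelling coincides with the paper's initial labelling (your ``increase by $1$'' clause is exactly the ``$+1$'' in $\lambda(v)=\max\{\deg_-(v),\deg_+(v)\}+1$ at those vertices), so after correcting the target you should be able to verify equality directly from the identity without invoking Ford--Fulkerson.
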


\begin{figure}[ht!]
\centering
\labellist
\small\hair 2pt
\pinlabel {$2$} [b] at 7 197
\pinlabel {$2$} [b] at 79 197
\pinlabel {$2$} [b] at 141 197
\pinlabel {$2$} [b] at 214 197
\pinlabel {$2$} [tl] at 121 145
\pinlabel {$2$} [bl] at 118 81
\pinlabel {$2$} [l] at 18 9
\pinlabel {$2$} [r] at 102 9
\pinlabel {$2$} [r] at 205 9
\pinlabel {$1$} [r] at 6 102
\pinlabel {$1$} [r] at 106 116
\pinlabel {$1$} [tr] at 52 135
\pinlabel {$1$} [bl] at 95 172
\pinlabel {$1$} [br] at 130 172
\pinlabel {$1$} [l] at 217 135
\pinlabel {$1$} [r] at 109 51
\pinlabel {$1$} [bl] at 162 47
\endlabellist
\includegraphics[scale=0.5]{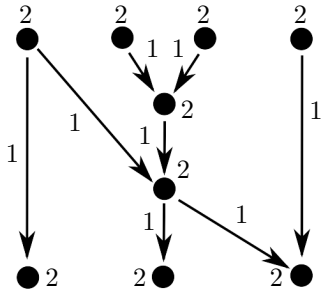}
\caption{A width tree $T$ achieving equality for the lower bound of Theorem \ref{minflowmaxcut}. In this case we have $N^2_-(T) = 1$; $N^2_+(T) = 4$, and $\max |\boundary S| = 5$. In this example, the set of sources is a strong source-sink cut realizing the maximum.}
\label{fig:widthtreeeq}
\end{figure}

The following lemma is a version of \cite[Equation (1)]{TT2}. It implies (among other things) that $\netextent(T, \lambda) \geq 0$ for nonnegative width trees. For a ditree $T$, with vertex $v$, we let $v_+$ denote the set of edges with tail at $v$ and $v_-$ the set of edges with head at $v$.

\begin{lemma}\label{easy sum}
Suppose $(T, \lambda)$ is a width tree with vertex set $V$. Then 
\[
\netextent(T, \lambda) = \sum\limits_{v \in V}(\lambda(v) - \lambda(v_-)) = \sum\limits_{v \in V}(\lambda(v) - \lambda(v_+)).
\]
\end{lemma}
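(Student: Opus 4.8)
The plan is to prove Lemma \ref{easy sum} by a straightforward counting argument using the tree structure, specifically the fact that each edge has exactly one tail and one head. Observe that $\netextent(T, \lambda) = \sum_{v \in V}\lambda(v) - \sum_{e \in E}\lambda(e)$ by definition. The key observation is that every edge $e$ of $T$ is counted exactly once in the sum $\sum_{v \in V}\lambda(v_-)$: namely, $e$ contributes $\lambda(e)$ to the term $\lambda(v_-)$ where $v$ is the head of $e$, and to no other term. Hence $\sum_{v \in V}\lambda(v_-) = \sum_{e \in E}\lambda(e)$. Substituting gives
\[
\netextent(T, \lambda) = \sum_{v \in V}\lambda(v) - \sum_{v \in V}\lambda(v_-) = \sum_{v \in V}\bigl(\lambda(v) - \lambda(v_-)\bigr).
\]
The identical argument with heads replaced by tails gives $\sum_{v \in V}\lambda(v_+) = \sum_{e \in E}\lambda(e)$, yielding the second equality.

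First I would set up notation, recalling that $\lambda(v_-) = \sum_{e \in v_-}\lambda(e)$ and $\lambda(v_+) = \sum_{e \in v_+}\lambda(e)$ (the sums of labels over incoming, resp.\ outgoing, edges at $v$). Then I would make precise the double-counting claim: the collection $\{v_- : v \in V\}$ partitions the edge set $E$, since each edge has a unique head; likewise $\{v_+ : v \in V\}$ partitions $E$ via unique tails. This is the only structural input needed, and it holds for any directed graph, not just ditrees, so the tree hypothesis is not actually used here. Then the computation above finishes the proof.

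I do not expect any real obstacle: this is a bookkeeping identity, and the main point is simply to reorganize the sum $\sum_{e \in E}\lambda(e)$ by grouping edges according to their head (resp.\ tail) vertex. The remark that $\netextent(T,\lambda) \geq 0$ for nonnegative width trees then follows immediately from condition (2) in Definition \ref{digraph term}, which guarantees $\lambda(v) \geq \lambda(v_-)$ whenever all incoming-edge labels are nonnegative (and in the nonnegative case, $\lambda(v_-)$ is exactly the sum of the nonnegative incoming labels), so every summand $\lambda(v) - \lambda(v_-)$ is nonnegative; but since the lemma statement only asserts the two equalities, I would relegate this consequence to a sentence after the proof or omit it.
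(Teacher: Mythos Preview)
Your proof is correct and follows exactly the same approach as the paper: the paper's proof is the single sentence ``Each edge is an incoming edge for some vertex and an outgoing edge for some vertex,'' which is precisely your double-counting observation. Your version is simply a more detailed write-up of the same bookkeeping identity.
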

\begin{proof}
Each edge is an incoming edge for some vertex and an outgoing edge for some vertex.
\end{proof}

\begin{definition}
Suppose that $T$ is a ditree.  A positive labelling  $F$ on $T$ is \defn{conservative} if it satisfies the \defn{flow constraints}; that is:
\begin{itemize}
\item for each vertex $v$ that is neither a source nor a sink, $F(v_-) = F(v) = F(v_+)$;
\item for each sink vertex $v$, $F(v) = F(v_-)$;
\item for each source vertex $v$, $F(v) = F(v_+)$. 
\end{itemize}
A conservative labelling is called a \defn{flow}. The \defn{value} of a flow $F$ is the total value of $F$ on the edges incoming to the sink vertices of $T$. The \defn{value} $\val(F, S)$ of a flow $F$ on a strong source-sink cut $S$ is the total value of $F$ on the edges of $T$ having their tail in $S$ and their head not in $S$.
\end{definition}

\begin{remark}
Observe that for a flow $F$ and strong source-sink cut $S$, $\val(F, S) \geq |\boundary S|$ since $F$ is positive.
\end{remark}

Note that given a flow $F$ on a ditree $T$, the pair $(T, F)$ is a width tree. We begin by converting width labels into flows. Because we want to keep track of product edges, we make the following definition. See Figure \ref{fig:augmenting}.

\begin{figure}[ht!]
\includegraphics[scale=0.5]{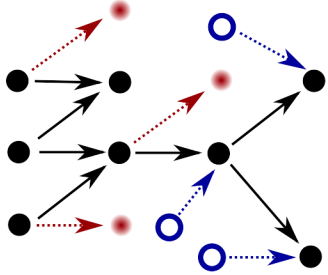}
\caption{Consider the ditree $T$ consisting of the solid black vertices and edges. Together with the hollow and fuzzy vertices and edges we have the augmented ditree $\wihat{T}$. (In the online version, the fuzzy vertices are red and the hollow vertices are blue, as are the incident edges.) The fuzzy vertices  and their incident edges are the $A_+$ and $E_+$ augmenting vertices and edges. The hollow vertices  and their incident edges are the $A_-$ and $E_-$ augmenting vertices and edges.}
\label{fig:augmenting}
\end{figure}

\begin{definition}
Suppose that $T$ is a ditree. For each vertex $v$ of $T$ that has a single outgoing edge, let $A_+(v)$ denote a vertex not in $T$ and let $E_+(v)$ denote an edge pointing from $v$ to $A_+(v)$. Similarly, for each vertex $v$ of $T$ that has a single incoming edge, let $A_-(v)$ denote a vertex not in $T$ and $E_-(v)$ an edge pointing from $A_-(v)$ to $A_+(v)$. We require that all the $A_\pm(v)$ vertices be distinct from each other. Let $\wihat{T}$ be the ditree obtained by taking the edges and vertices of $T$ together with all the $A_\pm(v)$ vertices and $E_\pm(v)$ edges. We call $\wihat{T}$ the \defn{augmented ditree} for $T$ and $E_\pm(v)$ and $A_\pm(v)$ the \defn{augmenting edges} and \defn{augmenting vertices}. Extend $\lambda$ to a labelling $\wihat{\lambda}$ on $\wihat{T}$ by declaring $\wihat{\lambda}$ to be one on all augmenting vertices and edges. Note that $(\wihat{T}, \wihat{\lambda})$ is a positive width tree where the only product edges are the augmenting edges.
\end{definition}

Figure \ref{fig:alg} shows an example of the first three steps of the algorithm used in the proof of the next theorem.

\begin{theorem}\label{flow exist}
Suppose that $(T, \lambda)$ is a positive width tree without product edges. Then there exists a flow $F$ on the augmented ditree $\wihat{T}$ such that $\netextent(\wihat{T}, F) = \netextent(T, \lambda)$. Furthermore, $F\geq \wihat{\lambda}$.
\end{theorem}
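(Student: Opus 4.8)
The plan is to construct $F$ by an explicit ``deficit--pushing'' algorithm on $\widehat{T}$: start from the labelling $\widehat{\lambda}$, which fails to be conservative only in that the flow \emph{equalities} are replaced by \emph{inequalities}, and then repair the flow constraints one vertex at a time, always increasing labels and always preserving net extent. I will assume $T$ has at least one edge; if $T$ is a single vertex then $\widehat{T}=T$ and the statement is degenerate. For a non-sink vertex $v$ of $\widehat{T}$ set $\operatorname{def}^+(v)=F(v)-F(v_+)$, and for a non-source $v$ set $\operatorname{def}^-(v)=F(v)-F(v_-)$. First I would check that, with $F:=\widehat{\lambda}$, both quantities are $\geq 0$ at every vertex: at vertices of $T$ this is the width--tree inequality, and the point of productlessness is exactly that at a vertex $v$ of $T$ with a single incoming (resp. outgoing) edge $e$ one has $\lambda(e)+\widehat{\lambda}(E_-(v))=\lambda(e)+1\leq\lambda(v)$ because $\lambda(e)<\lambda(v)$; the augmenting vertices have $\operatorname{def}^\pm=0$. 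Also, since each augmenting vertex--edge pair contributes $1-1$ to net extent, $\netextent(\widehat{T},\widehat{\lambda})=\netextent(T,\lambda)$. A ditree has no directed cycle, so $\widehat{T}$ admits a topological order of its vertices (tails before heads).

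The basic move is: given an edge $e$ with tail $u$ and head $w$ and a nonnegative integer $t$, raise $F(e)$ by $t$ and raise $F(w)$ (the \emph{head version}) or $F(u)$ (the \emph{tail version}) by $t$. Either version only increases $F$ and changes net extent by $0$ (a $+t$ on an edge and a $+t$ on an incident vertex cancel). The head version changes \emph{no} $\operatorname{def}^-$ anywhere, decreases $\operatorname{def}^+(u)$ by $t$, and increases $\operatorname{def}^+(w)$ by $t$; the tail version is its mirror image with $\operatorname{def}^+$ and $\operatorname{def}^-$ interchanged. \emph{Phase one (forward sweep):} process the vertices of $\widehat{T}$ in a topological order; at each non-sink $v$ write $\operatorname{def}^+(v)=\sum_{e\in v_+}t_e$ with $t_e\geq 0$ integers and apply the head-version move to each outgoing edge $e$ with weight $t_e$. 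This zeroes $\operatorname{def}^+(v)$, leaves every $\operatorname{def}^-$ untouched, and only feeds $\operatorname{def}^+$ forward along edges; since a topological order visits each vertex after all of its predecessors — hence after every push that can reach it — one sweep achieves $\operatorname{def}^+\equiv 0$. (When $v$ has a unique outgoing $T$-edge the push is soaked up by $E_+(v)$ and terminates at the sink $A_+(v)$; when $v$ has two or more it is passed through; every push terminates at a sink of $\widehat{T}$.) \emph{Phase two (backward sweep):} process the vertices in a reverse topological order and eliminate $\operatorname{def}^-(v)$ using tail-version moves on the incoming edges; by the mirror statement this preserves $\operatorname{def}^+\equiv 0$, only feeds $\operatorname{def}^-$ backward, and one sweep achieves $\operatorname{def}^-\equiv 0$.

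At the end every flow constraint holds, and $F\geq\widehat{\lambda}\geq 1$ is positive, so $F$ is a flow on $\widehat{T}$; it satisfies $F\geq\widehat{\lambda}$ by construction, and $\netextent(\widehat{T},F)=\netextent(\widehat{T},\widehat{\lambda})=\netextent(T,\lambda)$ since every move preserved net extent.

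The step I expect to be the main obstacle is the interaction bookkeeping in the second paragraph: one must verify that forward repairs never create incoming deficits and that backward repairs never create outgoing deficits — so that Phase two does not undo Phase one and so that each sweep genuinely terminates — and that the augmented ditree is built so that \emph{every} propagation has somewhere to terminate (this is why degree-one vertices of $T$ receive augmenting edges and higher-degree vertices do not). Once the basic move and its precise effect on $\operatorname{def}^\pm$ are nailed down, everything else — positivity, $F\geq\widehat{\lambda}$, and the net extent identity — is immediate. I would present the first three iterations of the forward sweep as the worked example in Figure~\ref{fig:alg}.
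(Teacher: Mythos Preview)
Your proof is correct and follows essentially the same two-phase strategy as the paper: push outgoing deficits forward to sinks, then push incoming deficits backward to sources, each move preserving net extent and only increasing labels. The differences are cosmetic---you process in topological order and push one hop at a time with arbitrary distribution among outgoing edges, whereas the paper orders vertices by undirected distance from the source set, fixes one ``special'' outgoing edge per vertex, and pushes the entire deficit along the maximal coherent special-edge path in a single step; your explicit $\operatorname{def}^{\pm}$ bookkeeping and the verification that head-moves leave $\operatorname{def}^{-}$ untouched (and dually) is cleaner than the paper's account. One small slip in your closing parenthetical: augmenting edges are attached at vertices with a \emph{single incoming} or \emph{single outgoing} $T$-edge, not at degree-one vertices---an interior vertex with three incoming and one outgoing $T$-edge still receives an $E_{+}$---but this does not affect the argument, since your use of productlessness in the first paragraph invokes exactly the correct hypothesis.
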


\begin{figure}[ht!]
\centering
\labellist
\small\hair 2pt
\pinlabel {$1$} [b] at 8 335
\pinlabel {$10$} [b] at 44 248
\pinlabel {$14$} [b] at 67 193
\pinlabel {1} [b] at 48 328
\pinlabel {1} [br] at 78 259
\pinlabel {6} [bl] at 93 236
\pinlabel {3} [br] at 101 202
\pinlabel {10} [b] at 116 181
\pinlabel{5} [b] at 109 335
\pinlabel{12} [l] at 159 232
\pinlabel{15} [b] at 170 188
\pinlabel{5} [l] at 243 164
\pinlabel{19} [tr] at 210 69
\pinlabel{7} [l] at 256 11
\pinlabel{3} [l] at 240 44
\pinlabel {$1$} [b] at 386 335 %
\pinlabel {$10$} [b] at 421 248 %
\pinlabel {$14$} [b] at 443 193 %

\pinlabel {1} [b] at 429 328
\pinlabel {1} [br] at 455 261
\pinlabel {9} [bl] at 471 236
\pinlabel {3} [br] at 477 205 
\pinlabel {10} [b] at 493 182

\pinlabel{5} [b] at 486 336
\pinlabel{15} [l] at 538 233 %
\pinlabel{15} [b] at 549 189
\pinlabel{5} [l] at 620 164
\pinlabel{19} [tr] at 589 69
\pinlabel{7} [l] at 633 11
\pinlabel{3} [l] at 616 44
\pinlabel {$1$} [b] at 751 335 
\pinlabel {$10$} [b] at  787 249 
\pinlabel {$14$} [b] at  809 196

\pinlabel {1} [b] at 794 331
\pinlabel {1} [br] at 822 263
\pinlabel {9} [bl] at 835 239
\pinlabel {3} [br] at 843 207
\pinlabel {11} [b] at 859 184

\pinlabel{5} [b] at 851 340
\pinlabel{15} [l] at 902 235
\pinlabel{16} [b] at 915 191
\pinlabel{6} [l] at 986 164
\pinlabel{20} [tr] at 956 73
\pinlabel{8} [l] at 996 15
\pinlabel{4} [l] at 979 50
\endlabellist
\includegraphics[scale=0.45]{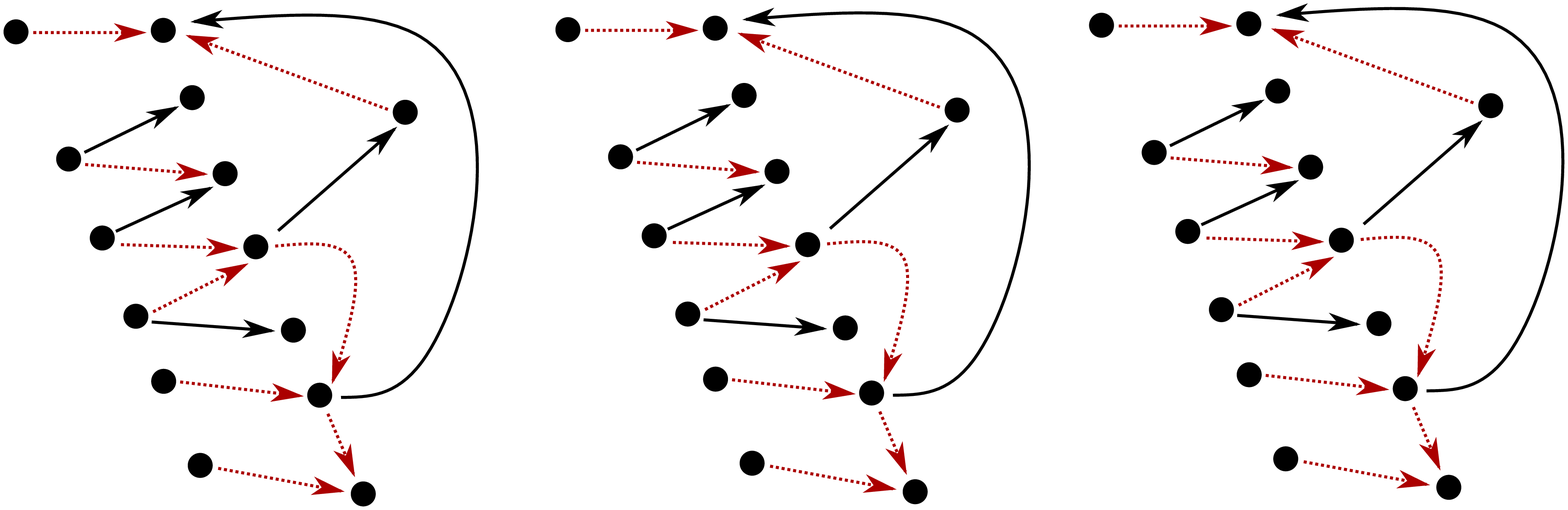}
\caption{The first three steps of the algorithm to convert an arbitrary labelling into a flow. We have not shown the labels on vertices and edges irrelevant to the first three steps. The vertices are ordered from left to right and the ``special edge'' departing from each vertex is dashed (red in the online version). The first step leaves the labelling unchanged.}
\label{fig:alg}
\end{figure}

\begin{proof}
Our strategy for constructing a flow is to increase label values on edges and internal vertices so as to push excess label values out to the sources and sinks of the ditree. Suppose that $(T, \lambda)$ is a positive width tree without product edges. Let $\wihat{S}$ be the set of sources of $\wihat{T}$.

Let $E_k$ be the subgraph of $\wihat{T}$ which consists of the edges and vertices of $\wihat{T}$ lying on a path starting at a vertex of $\wihat{S}$ and of length $k$. Order the vertices of $\wihat{T}$ as $v_1, \hdots, v_n$ so that for each $k$, all vertices of $E_k$ appear before the vertices of $E_{k+1}\setminus E_k$. For each vertex $v_i$, make a choice of outgoing edge, called the \defn{special edge} at $v_i$ and denoted $e(v_i)$. 

We define $F$ recursively. Let $\lambda_0 = \wihat{\lambda}$. Suppose we have defined $\lambda_i$ for $i < n$ so that $(\wihat{T},\lambda_i)$ is a width tree. We define $\lambda_{i+1}$ as follows. Let $\Delta = \lambda_i(v_{i+1}) - \lambda_i((v_{i+1})_+)$ be the difference between the label at $v_i$ and the sum of the labels on edges leaving $v_i$. Let $Q$ be the set of maximal coherent paths in $\wihat{T}$ that start at $v_{i+1}$ and traverse only special edges in $\wihat{T}$. (Maximal means that no path can be extended to a longer coherent path traversing only special edges.) Let $\lambda_{i+1}$ be the labelling obtained by increasing the value of $\lambda$ on all vertices (apart from $v_{i+1}$) and edges of the path in $Q$ beginning at $v_{i+1}$ by $\Delta$, and leaving all other labels unchanged. 

Notice that any time we have increased the label of an edge in $\wihat{T}$, we have also increased the label of its terminal vertex by the same amount, so $(\wihat{T}, \lambda_{i+1})$ is still a width tree. Furthermore, if we increased the label of a nonsink vertex $v$, we also increased the value of exactly one incoming edge by the same amount. Consequently, $\netextent(\wihat{T}, \lambda_{i+1}) = \netextent(\wihat{T}, \lambda_i)$.  We have arranged for the label on $v_{i+1}$ to be equal to the total label on the outgoing edges from $v_{i+1}$. For each $j < i+1$ such that $v_j$ is not a sink vertex, the value of $\lambda_{i+1}$ on $v_j$ is equal to its total value on the outgoing edges from $v_j$. By recursion we arrive at a width tree $(\wihat{T}, \lambda_n)$ with the property that for each nonsink vertex $v$, $\lambda_n(v) = \lambda_n(v_+)$ and for every vertex $v$, $\lambda_n(v) - \lambda_n(v_-) = \wihat{\lambda}(v) - \wihat{\lambda}(v_-)$. 

Let $(\ob{T}, \lambda_n)$ be the ditree obtained by reversing the directions on all edges of $\wihat{T}$. Apply the previous algorithm to arrive at $(\ob{T}, \ob{\lambda})$. Then $\ob{\lambda}$ is flow on both $\wihat{T}$ and $\ob{T}$ with the same net extent as $\lambda$ and for every edge $e$ of $\wihat{T}$, $F(e) \geq \lambda(e)$.
\end{proof}

\begin{proof}[Proof of Theorem \ref{minflowmaxcut}]
Suppose that $(T, \lambda)$ is a productless positive width tree. Let  $\wihat{T}$ be the augmented ditree and let $F$ be the flow constructed by Theorem \ref{flow exist}. Notice that we can construct a natural bijection between the set of strong source-sink cuts for $T$ and the set of strong source-sink cuts for $\wihat{T}$: Given a source-sink cut $S$ for $T$ we include the source augmenting vertices, but not the sink augmenting vertices, into $S$ to arrive at a strong source-sink cut $\wihat{S}$ for $\wihat{T}$. Conversely, given a strong source-sink cut $\wihat{S}$ for $\wihat{T}$, we remove the augmenting vertices to arrive at a strong source-sink cut $S$ for $T$. Observe that $|\boundary \wihat{S}|$ equals the sum of  $|\boundary S|$ with $N^2_-(T)$ and $N^2_+(T)$. 

Let $\wihat{S}$ be any strong source-sink cut for $\wihat{T}$. Since $F$ is conservative, by Lemma \ref{easy sum}, $\netextent(\wihat{T}, \wihat{\lambda}) = \val(F; \wihat{S}) \geq |\boundary \wihat{S}|$. It remains to show that we can construct a flow achieving equality.

Let $T$ be a ditree. There exists a positive labelling $\lambda$ for $T$ such that $(T, \lambda)$ is a width tree without product edges. To see this, let $\lambda(e) = 1$ for every edge $e$ and let $\lambda(v) = \max \{\deg_-(v), \deg_+(v)\} + 1$ for every vertex $v$. From $\lambda$, we can construct a flow $F$ on $\wihat{T}$ using Theorem \ref{flow exist}. We now modify $F$ to construct a flow achieving equality.

An \defn{adjusting path} for $F$ is a possibly non-coherent path $\alpha$ from a source vertex to a sink vertex such that for each edge $e$ traversed by $\alpha$ in the forward direction, $F(e) \geq 2$. Let $\epsilon = \min F(e) - 1$ where the minimum is taken over all edges traversed by $\alpha$ in the forward direction. Observe $\epsilon > 0$. If $\alpha$ traverses $e$ in the forward direction, let $F'(e) = F(e) - \epsilon$. If $\alpha$ traverses $e$ in the backward direction, let $F'(e) = F(e) + \epsilon$. If $e$ is an edge not traversed by $\alpha$, let $F'(e) = F(e)$. For each vertex $v$ of $\wihat{T}$, let $F'(v) = \max\{F'(v_-), F'(v_+)\}$.  Observe that $(T, F')$ is a conservative flow and that 
\[
\netextent(\wihat{T}, F') = \netextent(\wihat{T}, F) - \epsilon.
\]
Consequently, we cannot create adjusted flows indefinitely.

We implement a version of the Ford-Fulkerson algorithm as in \cite{West}. Given a flow $F$ on $\wihat{T}$, the algorithm produces a positive flow $F'$ whose value (equivalently, net extent) is at most the value of $F$ and a strong source-sink cut $\wihat{S}$ of $\wihat{T}$ with $|\boundary \wihat{S}| = \netextent(\wihat{T}, F')$. In the algorithm, we have a set $R_k$ of \emph{reached vertices} and a set $S_k \subset R_k$ of \defn{searched vertices}.

Let $R_0$ be the set of sources in $\wihat{T}$ and let $S_0 = \nil$. Assume we have $R_k$ and $S_k \subset R_k$. If $S_k = R_k$ the algorithm terminates. Suppose that there is a vertex $v \in R_k \setminus S_k$. For each outgoing edge $e$ from $v$ with $F(e) \geq 2$, add the head of $e$ to $R_k$. Record the fact that it was added upon arrival from $v$. For each incoming edge to $v$ add the tail of $v$ to $R_k$ and record that it was reached from $v$. Let $R_{k+1}$ be the set we obtained from our original $R_k$. Add $v$ to $S_k$ to obtain $S_{k+1}$. If we ever add a sink $w$  to $R_{k}$ in the process of obtaining $R_{k+1}$, we can, using our labels, trace backwards from $w$ to construct an adjusting path. From that we construct an adjusted flow and apply the algorithm to the adjusted flow. The algorithm thus terminates with a positive flow $F'$ such that $\netextent(\wihat{T}, F') \leq \netextent(\wihat{T}, F)$ and with sets $R_k = S_k$.

 Let $\wihat{S} = R_{k}$.  Each edge $e$ with tail in $\wihat{S}$ and head not in $\wihat{S}$ must have $F'(e) = 1$, as otherwise we would have continued the algorithm. There are no edges $e$ with head in $\wihat{S}$ and tail not in  $\wihat{S}$, as the algorithm adds all such vertices to the reached set. Thus, $\wihat{S}$ is a strong source-sink cut and
\[
\netextent(\wihat{T}, F') = F'^+(\wihat{S}) - F'^-(\wihat{S}) = F'^+(\wihat{S}) = |\boundary \wihat{S}|.
\]

Let $S'$ be a strong source-sink cut on $\wihat{T}$, maximizing $|\boundary S'|$. By our previous remarks, we have
\[
|\boundary \wihat{S}| = \netextent(\wihat{T}, F') \geq |\boundary S'| \geq |\boundary \wihat{S}|,
\]
so $|\boundary \wihat{S}|$ is itself maximal. 
\end{proof}

We can apply this to study the difference between width and bridge number.
\begin{corollary}\label{cor: difference}
Suppose that $L \subset S^3$ is a link with associated $(T, \lambda) \in \T_2(S^3, L)$ such that $\width_b(L) = \width(T, \lambda)$. Then
\[
\b(L) \geq N_-^2(T) + N_+^2(T) + \max |\boundary S|
\]
where the maximum is over all  strong source-sink cuts $S$ for $T$. Additionally, if $T$ has at least two edges and if $L$ admits no essential Conway sphere, then
\[
\width_b(L) - \b(L) \geq N_-^2(T) + N_+^2(T) + \max |\boundary S|.
\]
\end{corollary}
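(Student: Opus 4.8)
The plan is to read both inequalities off Theorem~\ref{minflowmaxcut}, applied first to $(T,\lambda)$ and then, after a short manipulation, to the rescaled labelling $2\lambda^2-\lambda$. Since $L\subset S^3$, the width tree $(T,\lambda)\in\T_2(S^3,L)$ has no boundary vertices and is productless, and I would begin with two preliminary observations. First, $\lambda$ is positive: by Proposition~\ref{props} an edge of label $-1$ or $0$ would certify that $L$ is split or composite, and, setting those cases aside (they are to be handled separately by reducing along the essential sphere they contain), every edge label is $\geq 1$, whence by the width-tree axioms every vertex, being incident to an edge, also has label $\geq 1$; in the Part~2 situation the hypothesis that $L$ has no essential Conway sphere additionally forbids edge labels equal to $1$. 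Second, $\netextent(T,\lambda)=\b(L)-1$; this is where the hypothesis $\width_b(L)=\width(T,\lambda)$ enters, via Theorem~\ref{amalg} (which identifies $\b(L)-1$ with $\netextent_{-2}(S^3,L)$) and the fact, related to Proposition~\ref{slim exist}(1), that a slim width tree realizing the bridge width also realizes $\netextent_{-2}$.

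The first bound is then immediate: $(T,\lambda)$ is a productless positive width tree, so Theorem~\ref{minflowmaxcut} gives $\netextent(T,\lambda)\geq N^2_-(T)+N^2_+(T)+\max|\boundary S|$, and substituting $\netextent(T,\lambda)=\b(L)-1$ yields $\b(L)\geq N^2_-(T)+N^2_+(T)+\max|\boundary S|$, even with a $+1$ to spare.

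For the second bound I would first note, using $\width(T,\lambda)=\netextent(T,2\lambda^2)$ from Definition~\ref{inv. def.} and the linearity of $\netextent(T,\cdot)$ in the label function, that
\[
\width_b(L)-\b(L)=\width(T,\lambda)-\netextent(T,\lambda)-1=\netextent(T,2\lambda^2-\lambda)-1 .
\]
Writing $2\lambda^2-\lambda=\lambda+(2\lambda^2-2\lambda)$ pointwise splits the right side as $\netextent(T,\lambda)+\netextent(T,2\lambda^2-2\lambda)-1$; the first summand is $\geq N^2_-(T)+N^2_+(T)+\max|\boundary S|$ by the argument above, so it suffices to show $\netextent(T,2\lambda^2-2\lambda)\geq 1$. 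The function $g(t)=2t(t-1)$ is non-negative, increasing on $[1,\infty)$, superadditive ($g(a+b)-g(a)-g(b)=4ab$) and vanishes at $0$ and $1$; these properties make $(T,2\lambda^2-2\lambda)$ a non-negative width tree, so by Lemma~\ref{easy sum} its net extent equals a sum over the vertices $v$ of the non-negative integers $g(\lambda(v))-\sum_{e\in v_-}g(\lambda(e))$. Because $T$ has at least two edges it has a vertex $v$ of degree at least $2$, and in a positive productless width tree without boundary vertices such a $v$ must satisfy $\lambda(v)\geq 2$ — from the width-tree axioms when $v$ has two incoming or two outgoing edges, and from productlessness when it has exactly one incoming and one outgoing edge. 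A short case check on the incoming edges at $v$, again using superadditivity and productlessness, shows the term of $v$ is positive (indeed $\geq 4$), so $\netextent(T,2\lambda^2-2\lambda)\geq 1$ and $\width_b(L)-\b(L)\geq N^2_-(T)+N^2_+(T)+\max|\boundary S|$.

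The part I expect to need real care is the second preliminary observation — extracting $\netextent(T,\lambda)=\b(L)-1$ from the hypothesis, which is in essence the statement that a slim width tree realizing the bridge width can be taken to realize $\netextent_{-2}$ — together with disposing of positivity in the split and composite cases. Granting these, the two bounds are formal consequences of Theorem~\ref{minflowmaxcut}, the only arithmetic being the elementary properties of $g(t)=2t(t-1)$ recorded above and the observation that a tree with at least two edges has a degree-$\geq 2$ vertex, necessarily labelled $\geq 2$ in a positive productless width tree.
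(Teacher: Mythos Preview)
Your proof is correct and, for the first inequality, follows the paper's line exactly (Theorems~\ref{amalg} and~\ref{minflowmaxcut}). For the second inequality the paper takes a shorter route: from the Conway-sphere hypothesis and Theorem~\ref{thm: slim main} one gets $\lambda\geq 2$ on all edges and vertices, so $\mu=2\lambda^2-\lambda$ is itself a positive labelling (indeed $\mu\geq 6$), and since $t\mapsto 2t^2-t$ is strictly increasing on $[1,\infty)$ and superadditive, $(T,\mu)$ is again a productless positive width tree; Theorem~\ref{minflowmaxcut} applied directly to $(T,\mu)$ then gives $\netextent(T,\mu)\geq N^2_-(T)+N^2_+(T)+\max|\boundary S|$ in one stroke. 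Your decomposition $\mu=\lambda+2\lambda(\lambda-1)$ together with the separate estimate $\netextent(T,2\lambda(\lambda-1))\geq 1$ arrives at the same place by a longer path.

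That said, your extra bookkeeping is not wasted. The paper writes $\width_b(L)-\b(L)=\netextent(T,\mu)$, tacitly using $\b(L)=\netextent(T,\lambda)$, whereas Theorem~\ref{amalg} gives $\b(L)-1=\netextent(T,\lambda)$; so one is really proving $\width_b(L)-\b(L)+1\geq N^2_-(T)+N^2_+(T)+\max|\boundary S|$. Your splitting recovers exactly the missing unit via $\netextent(T,2\lambda(\lambda-1))\geq 1$, and hence establishes the inequality as stated. In short: the paper's argument is cleaner in outline, while yours is more scrupulous about the constant.
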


\begin{proof}
The first statement follows from Theorems \ref{amalg} and \ref{minflowmaxcut}. Suppose, therefore, that $T$ has at least two edges and that $L$ admits no essential Conway sphere. By Theorem \ref{thm: slim main}, $\lambda \geq 2$. Consequently, $\mu = 2\lambda^2 - \lambda \geq 1$. The pair $(T, \mu)$ is a productless positive width tree. Note that $\netextent(T, \mu) = \netextent(T, 2\lambda^2) - \netextent(T, \lambda)$, so 
\[
\width_b(L) - \b(L) = \netextent(T, \mu) \geq N_-^2(T) + N_+^2(T) + \max |\boundary S|,
\]
as desired. 
\end{proof}

\section{Constructing associated knots}
In this section, we show:
\begin{theorem}\label{creation}
Suppose that $(T, \lambda, \delta)$ is a positive, productless width tree with distance threshold. There exists an even tangle $(M', \tau')$ and $\mc{H}' \in \H(M',\tau')$ associated to $(T, \lambda, \delta)$. If $T$ has no boundary vertices, then we may assume $M' = S^3$ and that $\tau'$ is a knot.
\end{theorem}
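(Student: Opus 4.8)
The plan is to build the tangle $(M',\tau')$ one vertex at a time, assembling c-trivial tangle pieces according to the combinatorial data of $(T,\lambda,\delta)$, and then to verify that the pieces glue to a multiple c-bridge surface whose associated width tree is exactly $(T,\lambda,\delta)$. To each vertex $v$ of $T$ we must produce a c-trivial tangle $(C_v,\tau_v)$ with positive boundary a sphere $H_v$ satisfying $\extent(H_v)=\lambda(v)$, i.e. $|H_v\cap\tau_v|=2(\lambda(v)+1)$, and with negative boundary components matching the edges at $v$: for each edge $e$ incident to $v$ there should be a negative-boundary sphere $F_e$ with $|F_e\cap\tau_v|=2(\lambda(e)+1)$, and the transverse orientation on $e$ should correspond to whether $F_e$ sits on the ``incoming'' or ``outgoing'' side. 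Condition (2) in Definition \ref{digraph term} (that $\lambda(v)$ dominates the sum of the incoming labels and the sum of the outgoing labels separately, and positivity) is precisely what lets us realize such a $(C_v,\tau_v)$: the incoming edges partition $\lambda(v)+1$ copies of the $2(\lambda(v)+1)$ punctures of $H_v$ into groups, assign $\lambda(e)+1$ of them to each incoming $F_e$ connected to $H_v$ by that many vertical arcs, and absorb the slack using bridge arcs (and, since the tree may have ghost edges in general — though here every piece we build can avoid them — possibly ghost arcs); similarly on the outgoing side. The boundary-vertex condition (1), $\lambda(v)=\lambda(e)$, simply says that a boundary vertex is a single sphere which is both $H_v$ and the boundary component $F_e$, with $\tau_v$ entirely vertical arcs to the next piece — so when $T$ has no boundary vertices, every negative boundary sphere is glued to another piece and $M'$ is closed, i.e. $S^3$ after capping.

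The construction proceeds as follows. First I would fix, for each edge $e=(v,w)$ of $T$ with orientation $v\to w$, a sphere $F_e$ with $2(\lambda(e)+1)$ marked points, to serve as the common gluing sphere. Second, for each vertex $v$ I would explicitly build $(C_v,\tau_v)$ as a standard model: take $H_v\times[0,1]$ with $H_v=\boundary_+C_v$ and $H_v\times\{1\}$ subdivided into the negative boundary spheres (connected-summing together copies of the $F_e$ for $e$ incident to $v$), run $\lambda(e)+1$ vertical arcs from $H_v$ down into each $F_e$, and add bridge arcs on $H_v\times\{0\}$ to use up the remaining punctures. Because the number of bridge-arc endpoints needed is $2(\lambda(v)+1)-2\sum_{e\in v_\pm}(\lambda(e)+1)\ge 0$ on each side by condition (2), this is consistent; it is manifestly a c-trivial tangle (it $\boundary$-reduces along the obvious compressing discs separating the $F_e$'s plus bridge discs into product and elementary pieces). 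Third, glue the $(C_v,\tau_v)$ along the $F_e$'s by puncture-preserving homeomorphisms, orienting each $F_e$ from the tail piece to the head piece; call the result $(M',\tau')$ and $\mc{H}'$ the union of the $H_v$ and $F_e$. Since the underlying graph is a tree, no global obstruction arises from the gluing, and when there are no boundary vertices the outer boundary is empty, so $M'$ is a closed, hence (being built from $S^2\times I$'s and balls) equal to $S^3$, with $\tau'$ a closed 1-manifold. Fourth, check $\mc{H}'\in\H(M',\tau')$ and that its associated width tree is $(T,\lambda)$: this is bookkeeping — the vertices of the dual tree are the $H_v$, the edges are the $F_e$ with the chosen orientations, and the labels are the extents, which we arranged to be $\lambda$.

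The main obstacle is the distance threshold: we must arrange that $d_{sc}(H_v)\ge\delta$ for every vertex $v$, so that $(T,\lambda,\delta)$ is genuinely associated to $\mc{H}'$ in the sense of Definition \ref{distance assoc}. The standard model above has $d_{sc}(H_v)$ small (there are obvious disjoint sc-discs on the two sides), so I would need to modify each piece before gluing: insert a sufficiently complicated braid (a mapping class of the punctured sphere $H_v$ with large translation length in the curve complex, or its $4$-punctured analogue) into the product region $H_v\times[0,1]$, separating the bridge/vertical structure above from the negative boundary below. A standard argument — the kind used by Hayashi--Shimokawa and by Taylor--Tomova, and invoked for the Davies--Zupan examples in the Interlude — shows that conjugating the tangle structure by a mapping class whose image in the curve complex moves the relevant disc-bounding curves far apart forces $d(\boundary A,\boundary B)$ to be large for sc-discs $A,B$ on opposite sides, hence $d_{sc}(H_v)\ge\delta$. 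One must check this is compatible with all the gluings simultaneously and does not disturb the extents or the c-trivial structure; since the braids are inserted in the interior of each piece away from the gluing spheres, compatibility is automatic, and since braids are products of half-twists they preserve puncture counts and hence extents. Handling the case $\lambda(v)+1=2$ (a $4$-punctured $H_v$, where ``disjoint'' is replaced by ``intersecting twice'' in Definition \ref{curve distance}) requires the $4$-punctured-sphere version of the distance estimate, but this is by now standard. Finally, when $T$ has no boundary vertices I would note that the resulting $\tau'$ is connected — i.e. a knot rather than a link — can be arranged by an additional small modification (inserting, if necessary, extra braiding to connect the components of $\tau'$ without changing $\mc{H}'$ up to isotopy transverse to $\tau'$ or any of the invariants), or else state the theorem for links and remark that genericity of the braid choices yields a knot.
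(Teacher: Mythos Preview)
Your overall strategy matches the paper's: build a c-trivial tangle for each side of each vertex, glue along the edge-spheres to obtain $\mc{H}'\in\H(M',\tau')$ associated to $(T,\lambda)$, then modify by twists to push $d_{sc}$ above $\delta$, and finally arrange connectedness of $\tau'$. However, your explicit piece-by-piece construction contains a genuine error.

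You claim that, on (say) the incoming side of a vertex $v$, the number of bridge-arc endpoints left over after running vertical arcs from $H_v$ to each $F_e$ is
\[
2(\lambda(v)+1)-2\sum_{e\in v_-}(\lambda(e)+1)\ \geq\ 0
\]
``by condition (2).'' But condition (2) only gives $\lambda(v)\geq\sum_{e\in v_-}\lambda(e)$; it does \emph{not} give $\lambda(v)+1\geq\sum_{e\in v_-}(\lambda(e)+1)=|v_-|+\sum_{e\in v_-}\lambda(e)$. For instance, take $\lambda(v)=3$ with three incoming edges each labelled $1$: this is positive, productless, and satisfies condition (2), yet your count is $2\cdot 4-2(2+2+2)=-4$. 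In such cases the c-trivial tangle on that side \emph{must} contain ghost arcs (arcs joining two negative boundary spheres), so your parenthetical ``though here every piece we build can avoid them'' is false. The paper handles exactly this issue with a separate lemma (surjectivity of the association from even c-trivial tangles onto \emph{trivialpods}), whose proof is an inductive argument that systematically trades bridge arcs and vertical arcs for ghost arcs when the vertical-arc budget runs out. Once you allow ghost arcs the construction goes through, but the bookkeeping is not the one you wrote down.

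Your treatment of the distance threshold is in the right spirit but imprecise. The paper does not rely on a generic ``complicated braid with large translation length'' argument; it invokes a specific theorem (a tangle version of the high-distance result of Blair--Campisi--Johnson--Taylor--Tomova) that a single Dehn twist along a well-chosen curve bounding a twice-punctured disc in $H_v$ already forces $d_{AD}(H_v)\geq N$, together with a comparison $d_c\geq d_{AD}$ and a separate elementary Farey-graph lemma for the case $|H_v\cap\tau'|=4$. The references you cite (Hayashi--Shimokawa, Taylor--Tomova) are not the sources for this step. Your approach to knottedness (extra braiding to merge components) is equivalent to the paper's (half Dehn twists along suitable spheres of $\mc{H}'$).
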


\begin{definition}
Let $\Lambda = \{ n \in \Z : n \geq -1\}$. A \defn{trivialpod} with labels $\lambda$ is a rooted ditree $T$ such that the following hold:
\begin{enumerate}
\item  The root is designated as the \defn{thick vertex} and all other vertices are designated as a \defn{thin vertices};
\item Each thin vertex is joined to the thick vertex by an edge and there are no edges with both endpoints at thin vertices;
\item Either all edges are oriented into the thick vertex or all edges are oriented out of the thick vertex;
\item Each vertex $v$ of $T$ has a label $\lambda(v) \in \Lambda$;
\item The label of the thick vertex is at least the sum of the \emph{nonnegative} labels of the thin vertices.
\end{enumerate}

Two trivialpods are equivalent if there is a label-preserving digraph isomorphism (possibly orientation-reversing) between them.
\end{definition}

Suppose $(C, T_C)$ is a c-trivial tangle where $\boundary_+ C$ has been transversally oriented either into or out of $C$ and each component of $\boundary_- C$ has also been given a transverse orientation which points out of or into $C$, respectively. We can then construct a trivialpod by declaring $\boundary_+ C$ to be the thick vertex and each component of $\boundary_- C$ to be a thin vertex and joining each thin vertex to the thick vertex by an edge which has been given an orientation corresponding to the orientation of $\boundary C$. For a vertex $S \cpt \boundary M$ of this trivialpod, we declare the label $\lambda(S)$ to be $-1 + |S \cap T_C|/2$. We say that the trivialpod and the c-trivial tangle $(C, T_C)$ are \defn{associated}. The next lemma shows that this concept is well-defined. 

\begin{lemma}
Association is a surjection $f$ from the set of even c-trivial tangles up to equivalence to the set of trivialpods up to equivalence, such that each even c-trivial tangle $(C, \tau)$ is associated to the trivialpod $f(C,\tau)$.
\end{lemma}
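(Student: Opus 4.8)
The plan is to establish separately that $f$ is a well-defined map into the set of trivialpods up to equivalence and that it is onto; the remaining clause, that $(C,\tau_C)$ is associated to $f(C,\tau_C)$, is just the definition of $f$.

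For well-definedness, first observe that axioms (1)--(4) of a trivialpod are built directly into the construction of the rooted ditree associated to a transversally oriented even c-trivial tangle, so the only substantive point is axiom (5), the inequality $\lambda(\boundary_+ C) \ge \sum_{F \cpt \boundary_- C} \max(\lambda(F), 0)$. To verify it I would split $\tau_C$ into its $v$ vertical arcs, $b$ bridge arcs, and $g$ ghost arcs; then $\lambda(\boundary_+ C) = \extent(\boundary_+ C) = b + v/2 - 1$, while $\sum_F \max(\lambda(F),0) = v/2 + g - \#\{F : \lambda(F) \ge 0\}$, so the desired inequality is equivalent to $b + \#\{F : \lambda(F)\ge0\} \ge g + 1$. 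This follows from the fact (already noted in the discussion of c-trivial tangles) that the ghost arc graph of $(C,\tau_C)$ is a forest whose only non-isolated vertices are the punctured components of $\boundary_- C$, so the number of ghost arcs is bounded in terms of the number of those components. Independence of the chosen transverse orientation and invariance under equivalence of c-trivial tangles are then immediate: reversing the orientation reverses every edge (an operation allowed in trivialpod equivalence), and a homeomorphism of pairs preserving positive boundaries carries $\boundary_+$ to $\boundary_+$ and permutes the components of $\boundary_-$ preserving puncture counts and hence labels.

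For surjectivity I would build, from a trivialpod $T$ with thick label $\ell_0$ and thin labels $\ell_1, \dots, \ell_k$ (say $\ell_1,\dots,\ell_{k'}\ge0$ and $\ell_{k'+1}=\dots=\ell_k=-1$), an explicit even c-trivial tangle realizing it. Take a product tangle $(S^2\times I, Y_i\times I)$ with $|Y_i| = 2\ell_i+2$ for each $i\le k'$ and a copy of $(S^2\times I,\nil)$ for each $i>k'$, declaring the two boundary spheres of each piece to be a thin sphere and part of the positive boundary. Attach one-handles to the positive boundary spheres so as to merge them into a single sphere, routing one strand through enough of these handles that the punctured product pieces become joined by ghost arcs forming a spanning tree on $\{P_1,\dots,P_{k'}\}$; finally add the number of small boundary-parallel bridge arcs near $\boundary_+ C$ needed to bring $|\boundary_+ C \cap \tau_C|$ up to $2\ell_0 + 2$ (axiom (5) is exactly what makes that number nonnegative). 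The resulting $(C,\tau_C)$ is even and has $f(C,\tau_C)$ equivalent to $T$ by inspection of puncture counts; c-triviality is witnessed by taking $\Delta$ to be the cocores of the one-handles (once-punctured on the ghost-arc handles, unpunctured on the rest) together with the frontiers of the bridge discs, since $\boundary$-reducing along $\Delta$ undoes the handles and cuts off the bridge arcs as elementary ball tangles, leaving the original product tangles with positive boundaries inherited from $\boundary_+ C$.

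I expect the main obstacle to be the surjectivity construction, and specifically the role of ghost arcs in it: they are precisely the mechanism by which several punctured thin spheres can be merged before contributing to the positive boundary, so that the number of bridge arcs required is governed by the slack $\ell_0 - \sum_{\ell_i\ge0}\ell_i$ in axiom (5) rather than by the number of thin vertices (without ghost arcs one could realize only trivialpods satisfying the much stronger bound $\ell_0+1 \ge \sum_i(\ell_i+1)$). Once the construction is arranged correctly, the verification that it is c-trivial and carries the right labels is routine; the degenerate configurations ($k=0$, or $k'=0$) should be checked by hand.
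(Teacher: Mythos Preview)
Your proof is correct, and both halves take a genuinely different route from the paper's.

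For well-definedness of axiom~(5), you do a direct puncture count: writing $b,v,g$ for the numbers of bridge, vertical, and ghost arcs, you reduce the inequality to $b + p \ge g+1$ (with $p$ the number of punctured components of $\boundary_-C$) and then invoke the forest structure of the ghost arc graph to get $g \le p-1$. The paper instead argues structurally, reducing $(C,\tau_C)$ along a complete system $\Delta$ of sc-discs and observing that axiom~(5) holds tautologically on the resulting products and elementary ball tangles, with $\boundary_-C$ untouched by the reductions. Your count is more elementary and makes the role of ghost arcs transparent; the paper's argument is more in the spirit of the compressionbody formalism. One small caveat: your forest bound gives $g+1\le p$ only when $p\ge 1$; when $p=0$ (hence $g=v=0$) you need $b\ge 1$, which fails for $\tau_C=\nil$. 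But this degenerate case is a definitional wrinkle in axiom~(5) itself rather than a flaw in your argument.

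For surjectivity, you give a single explicit model: start from the product tangles realizing the thin labels, tube their positive boundaries together with $1$--handles, route strands through $k'-1$ of those handles so the punctured pieces are linked by a path of ghost arcs, and finally add $\ell_0-\sum_{i\le k'}\ell_i$ bridge arcs. This is clean and makes clear that the slack in axiom~(5) is exactly the bridge-arc count. The paper instead inducts on the number of thin vertices: it removes an open ball to create a new unpunctured thin sphere $S_{N+1}$, then chooses a tangle minimizing $\lambda(v_{N+1})-\extent(S_{N+1})$ and runs through several replacement moves (bridge arc $\to$ two vertical arcs, rerouting ghost arcs through $S_{N+1}$, etc.) to force that gap to zero by contradiction. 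The inductive argument is more flexible but considerably longer; your construction reaches the same conclusion in one stroke, at the cost of handling $k=0$ and $k'=0$ separately (which you acknowledge).
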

\begin{proof}
We start by showing that association is well-defined. That is, given a c-trivial tangle $(C, \tau)$ with $\boundary C$ transversaly oriented as above, the ditree $T$  with thick vertex $v$ and labels $\lambda$ constructed above is a trivialpod. The only condition that is not immediate is condition (5). To see that this holds, choose a collection of pairwise disjoint sc-discs $\Delta$ for $\boundary_+ C$ such that $(C, \tau) \setminus \Delta$ is the disjoint union of product tangles and elementary ball tangles. Each $\boundary$-reduction of a punctured surface $S$ along an unpunctured disc decreases $\extent(S) = -\chi(S) + |S \cap \tau|/2$ by two. Each decomposition along a once-punctured disc leaves $\extent(S)$ unchanged. At the conclusion of the $\boundary$-reductions using $\Delta$, the result holds and since the $\boundary$-reductions left $\boundary_- C$ alone, it also holds beforehand. This association clearly preserves equivalence.

We now show that $f$ is a surjection. Suppose that $T$ is a trivialpod with labels $\lambda$. Let $v$ be the thick vertex of $T$ and set $M = \lambda(v)$. We induct on the number of thin vertices. If $T$ has no thin vertices,  let $(C, \tau)$ be the result of inserting $M + 1$ bridge arcs into a 3-ball.  Observe that $\extent(\boundary_+ C) = -1 + (M + 1) = \lambda(v)$. Thus, the result holds in this case. Suppose, therefore, that there exists an $N \geq 0$ such that whenever $T'$ is a trivialpod having thick vertex $v$ with label $M$ and $0 \leq k \leq N$ thin vertices, then $T'$ is associated to some even c-trivial tangle.

Assume $T$ has $N+1$ thin vertices $v_1, \hdots, v_{N+1}$. Let $T'$ be the tree obtained by removing $v_{N+1}$ and its incident edge from $T$. The vertex $v$ is still a thick vertex for $T'$ and $T'$ inherits edge orientations and labels from $T$.  By our inductive hypothesis, there is an even c-trivial tangle $(C', \tau')$ such that $T'$ is associated to $(C', \tau')$. 

Let $S_i$ for $i \in \{1, \hdots, N\}$ be the components of $\boundary_- C'$ corresponding to thin vertex $v_i$. Let $C$ be the result of removing an open 3-ball from $C'$ that is disjoint from $\tau'$ and let $S_{N+1} = \boundary C \setminus \boundary C'$. As in the previous paragraph, $(C, \tau')$ is a c-trivial tangle with $S_1, \hdots, S_{N+1} \cpt \boundary_- C$.  Out of all c-tangles $(C, \tau'')$ having the properties that:
\begin{enumerate}
\item $\extent(\boundary_+ C) = \lambda(v)$,
\item For $i \leq N$, $\extent(S_i) = \lambda(v_i)$,
\item $\extent(S_{N+1}) \leq \lambda(v_{N+1})$
\end{enumerate}
assume that we have chosen $\tau''$ to minimize $\lambda(v_{N+1}) - \extent(S_{N+1})$. If we have equality, then set $(C, \tau) = (C, \tau'')$ and we are done. So suppose, for a contradiction, that $\lambda(v_{N+1}) > \extent(S_{N+1})$. In particular, observe that this means that $\lambda(v_{N+1}) \geq 0$.

If $\tau''$ contains a bridge arc $b$, let $\tau'''$ be the result of removing $b$ from $\tau''$ and inserting two vertical arcs, each joining $\boundary_+ C$ to $S_{N+1}$. This preserves the extent of $\boundary_+ C$, as well as $\extent(S_i)$ for $i \leq N$ but reduces $\lambda(v_{N+1}) - \extent(S_{N+1})$ by 1. Thus, $\tau''$ has no bridge arcs. If $\tau''$ contains a ghost arc $g$ having endpoints $S_k$ and $S_\ell$ for distinct $k, \ell \leq N$, then we can let $\tau'''$ be the result of replacing $g$ in $\tau''$ with ghost arcs $g_k$ and $g_\ell$ such that $g_k$ joins $S_k$ to $S_{N+1}$ and $g_\ell$ joins $S_\ell$ to $S_{N+1}$. Observe that $(C, \tau''')$ is still a c-trivial tangle but that $\lambda(v_{N+1}) - \extent(S_{N+1})$ has decreased by 1. Additionally, $\extent(\boundary_+ C)$ and $\extent(S_i)$ for $i \leq N$ remain unchanged. Thus, $\tau'''$ contradicts our choice of $\tau''$. Hence, every ghost arc in $\tau''$ has an endpoint on $S_{N+1}$. By the definition of c-trivial tangle, no ghost arc has both endpoints on $S_{N+1}$. Thus, we may assume that $\tau''$ contains only vertical arcs and ghost arcs with a single endpoint on $S_{N+1}$. 

The ghost arc graph $\Gamma$ of $(C, \tau'')$ has at most one component containing an edge and that component, if it exists, has $S_{N+1}$ as a vertex. We have remarked that $\lambda(v) = |\boundary_+ C \cap \tau''|/2 - 1 \geq 0$. Thus, the number of vertical arcs is $|\boundary_+ C \cap \tau''| \geq 2$. Suppose there exist distinct $j, k \leq N$ such that $S_j$ and $S_k$ are incident to vertical arcs $v_j$ and $v_k$ respectively. Let $\tau'''$ be the result of replacing $v_j$ and $v_k$ with ghost arcs $g_j$ and $g_k$ and bridge arc $b$. The ghost arc $g_j$ joins $S_j$ to $S_{N+1}$ and the ghost arc $g_{k}$ joins $S_k$ to $S_{N+1}$. This can be done so that $(C, \tau''')$ is a c-trivial tangle. We have preserved $\extent(\boundary_+ C)$ and $\extent(S_i)$ for $i \leq N$, but reduced $\lambda(v_{N+1}) - \extent(S_{N+1})$, a contradiction. Thus, all vertical arcs have one endpoint on a unique component $S_i$ for $i \leq N$. 

If the ghost arc graph for $(C, \tau'')$ contained an edge, then, as it is acyclic, it would contain at least two vertices of degree 1. Since $(C, \tau'')$ is an even c-trivial tangle, both of the corresponding components of $\boundary_- C$ would be incident to vertical arcs. Thus, the ghost arc graph consists of isolated vertices. Remove one of the vertical arcs having an endpoint at $S_i$ and replace it with a ghost arc joining $S_i$ and $S_{N+1}$ and a vertical arc joining $\boundary_+ C$ and $S_{N+1}$. Let $\tau$ be the new tangle. This can be done so that $(C, \tau)$ is a c-trivial tangle. For each $j \leq N$, we have $|S_j \cap \tau''| = |S_j \cap \tau|$ and we also have $|\boundary_+ C \cap \tau''| = |\boundary_+ C \cap \tau|$, but $\extent(S_{N+1}) - \lambda(v_{N+1})$ has decreased by 2, a contradiction. Thus, the association of even c-trivial tangles with trivialpods is surjective.
\end{proof}

The proof of the next lemma is easy and admits a great deal of latitude. We will subsequently make use of the latitude.
\begin{lemma}\label{existence 1}
Suppose that $(T,\lambda)$ is a width tree. Then there exists an even tangle $(M, \tau)$ such that $(T, \lambda)$ is associated to $(M,\tau)$. If $T$ has no boundary vertices and if $\lambda \geq 0$, then we may take $M = S^3$ and $\tau$ to be a knot.
\end{lemma}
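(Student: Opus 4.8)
The plan is to realize $(T,\lambda)$ as the width tree associated to a multiple c-bridge surface built out of c-trivial tangles glued in the combinatorial pattern of $T$. The one genuinely useful observation is that $v$ together with \emph{all} of its incident edges is in general \emph{not} a trivialpod: condition~(5) in the definition of a trivialpod would then require $\lambda(v)$ to dominate the sum of the nonnegative labels on all incident edges, which is strictly stronger than condition~(2) of Definition~\ref{digraph term}. But condition~(2) says exactly that $\lambda(v)$ dominates the sum of the nonnegative labels on the edges \emph{incoming} to $v$, and, separately, those on the edges \emph{outgoing} from $v$. So for each nonboundary vertex $v$ I would split $v$ into two trivialpods: $P_v^-$, whose thick vertex carries label $\lambda(v)$ and which has one thin vertex, carrying label $\lambda(e)$, for each edge $e$ incoming to $v$ (all edges oriented into the thick vertex); and $P_v^+$, built the same way from the outgoing edges. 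Conditions (1)--(4) of a trivialpod are immediate, and condition (5) for $P_v^-$ (respectively $P_v^+$) is precisely the incoming (respectively outgoing) half of condition~(2) for $v$.

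By the surjectivity of association established in the preceding lemma, there are even c-trivial tangles $(C_v^-,\tau_v^-)$ and $(C_v^+,\tau_v^+)$ associated to $P_v^-$ and $P_v^+$. In particular $\extent(\boundary_+ C_v^-)=\extent(\boundary_+ C_v^+)=\lambda(v)$, and for each edge $e$ incoming (respectively outgoing) at $v$ the corresponding component of $\boundary_- C_v^-$ (respectively $\boundary_- C_v^+$) has extent $\lambda(e)$. Now glue: along $H_v:=\boundary_+ C_v^- \cong \boundary_+ C_v^+$ glue $C_v^-$ to $C_v^+$ by a puncture-preserving homeomorphism (possible since both spheres carry $2\lambda(v)+2$ punctures), producing a c-bridge surface; for each internal edge $e$ of $T$ joining $u$ to $v$, with $e$ outgoing at $u$ and incoming at $v$, glue the $e$-component of $\boundary_- C_u^+$ to the $e$-component of $\boundary_- C_v^-$ by a puncture-preserving homeomorphism (both carry $2\lambda(e)+2$ punctures), producing a sphere $F_e$; and leave unglued the single negative boundary component attached to each edge at a boundary vertex. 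The resulting gluing graph on the pieces $C_v^\pm$ is connected and has $2|V_{nb}|$ vertices and $2|V_{nb}|-1$ edges ($|V_{nb}|$ gluings $H_v$ and $|V_{nb}|-1$ internal edges, where $V_{nb}$ is the set of nonboundary vertices), hence is a tree, so $M$ is $S^3$ with one open $3$-ball removed for each boundary vertex; $\tau$ is the union of all the arcs, a properly embedded $1$-manifold, and $(M,\tau)$ is even because every boundary sphere meets $\tau$ an even number of times.

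It then remains to check that $\mc H := \mc H^+ \cup \mc H^-$, with $\mc H^+ = \{H_v\}$ and $\mc H^- = \{F_e : e \text{ an internal edge of }T\}$, is an oriented multiple c-bridge surface whose associated width tree is $(T,\lambda)$. Each component of $(M,\tau)\setminus\mc H$ is one of the $C_v^\pm$, hence c-trivial; $\mc H^+$ is the union of the positive boundary components and $\boundary M\cup\mc H^-$ the union of the negative ones. Orient each $F_e$ in the direction of the edge $e$ and each $H_v$ so that it points from $C_v^-$ toward $C_v^+$: then for $C_v^-$ the orientation on $\boundary_+ C_v^-=H_v$ points out of $C_v^-$ while that on every component of $\boundary_- C_v^-$ (an $F_e$ with $e$ incoming at $v$) points in, and for $C_v^+$ the positive boundary points in while every component of $\boundary_- C_v^+$ (an $F_e$ with $e$ outgoing at $v$) points out, which is the orientation axiom for both pieces. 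Finally, reading off the width tree associated to $\mc H$ returns $(T,\lambda)$: the vertices are the components of $\mc H^+\cup\boundary M$, with $\extent(H_v)=\lambda(v)$, and for an edge $e$ at a boundary vertex $w$ the corresponding boundary sphere and incidence edge both have extent $\lambda(e)=\lambda(w)$; the internal edges of $T$ correspond to the thin spheres $F_e$, with matching labels and orientations.

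For the last sentence of the lemma, suppose $T$ has no boundary vertices and $\lambda\ge 0$; then $M=S^3$ and $\tau$ is a link, and the task is to arrange that $\tau$ is connected. This is where the latitude in the construction is used: one is free both in the choice of the $C_v^\pm$ (the proof of the preceding lemma constrains only the extents of the boundary spheres, not the arc pattern) and in the choice of the gluing homeomorphisms. Since $\lambda\ge 0$, every sphere of $\mc H$ carries an even number of punctures that is at least $2$, so one may prescribe a single closed curve that walks through all of the pieces $C_v^\pm$ and uses up every puncture of every gluing sphere, with no other arc of $\tau$ left over; in other words the choices can be made so that $\tau$ is a knot. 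I expect the only step requiring genuine care to be the bookkeeping of the first two paragraphs --- splitting each vertex into the two trivialpods so that condition~(2) becomes exactly condition~(5) for each half, and verifying that the induced transverse orientations satisfy the multiple c-bridge surface axiom; once the spheres are known to have enough punctures, the connectivity adjustment is exactly the kind of latitude flagged just before the lemma.
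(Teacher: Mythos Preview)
Your proposal is correct and follows essentially the same approach as the paper: split each nonboundary vertex into an incoming trivialpod and an outgoing trivialpod, realize each by a c-trivial tangle via the surjectivity lemma, and glue along positive boundaries (to create thick spheres) and along matching negative boundary components (to create thin spheres). Your observation that the split is necessary precisely because condition~(2) of Definition~\ref{digraph term} gives two separate inequalities is exactly the point.

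The one place where your argument is noticeably less concrete than the paper's is the final ``make $\tau$ a knot'' step. You assert that one may ``prescribe a single closed curve that walks through all of the pieces,'' but do not give a mechanism. The paper instead builds any link and then argues iteratively: if $|\tau|\ge 2$, some c-trivial tangle $(C,\tau_C)$ meets two distinct components of $\tau$; since each leaf of the ghost arc graph is incident to vertical arcs, some boundary sphere $H$ of $C$ meets two distinct components of $\tau$; composing the gluing along $H$ with a half Dehn twist merges those two components. This is a cleaner and more verifiable procedure than your global prescription, and it would strengthen your write-up to adopt it.
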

\begin{proof}
Let $(T, \lambda)$ be a given width tree. For each nonboundary vertex $v \in T$, let $N_\pm(v)$ be the union of the vertex $v$ with ``half-edges'' of the outgoing/incoming edges of $T$ incident to $v$, respectively. Then each of $N_\pm(v)$ is a trivialpod, with $v$ as the thick vertex.  The labels on the thin vertices are inherited from the edges of $T$ incident to $v$. Choose a c-trivial tangle $(C_\pm(v), \tau_\pm(v))$ associated to each of these trivialpods. If $N_+(v)$ (for example) has no edges, then $\boundary_- C_+(v) = \nil$. Since $\boundary_+ C_-(v)$ and $\boundary_+ C_+(v)$ are each spheres with $2\lambda(v) + 2$ punctures, there is a homeomorphism $\phi_v$ taking one to the other and preserving punctures. Suppose that vertices $v$ and $w$ are adjacent via an edge $e$. Let $F \cpt \boundary_- C(v)$ and $F' \cpt \boundary_- C(w)$ be the components corresponding to $e$. Since $F$ and $F'$ are both spheres with $2\lambda(e) + 2$ punctures, there is a homeomorphism $\phi_e$ from $F$ to $F'$ taking punctures to punctures. Thus, we may glue all of the trivialpods together using the homeomorphisms $\phi_v$ and $\phi_e$ for all nonboundary vertices and edges of $T$. The result is a tangle $(M,\tau)$. Let $\mc{H}^+$ be the union of the surfaces $\boundary_+ C_\pm(v)$ in $M$. Let $\mc{H}^-$ be the union of the components of $\boundary_- C_\pm(v)$ that do not correspond to edges of $T$ incident to boundary vertices. Then $\mc{H} = \mc{H}^+ \cup \mc{H}^- $ is a multiple c-bridge surface for $(M,\tau)$ associated to $(T, \lambda)$. 

If $T$ has no boundary vertices, then $M = S^3$ and $\tau$ is a link.  If $\lambda \geq 0$ and $|\tau| \geq 2$, there exists a c-trivial tangle $(C, \tau_C) \cpt \mc{H}$ intersecting distinct components of $\tau$.  Each leaf and isolated vertex of the ghost arc graph is incident to vertical arcs, so some component $H$ of $\boundary C$ is incident to distinct components of $\tau$. Modify the gluing map along $H$ by a half Dehn twist to reduce the number of components of $\tau$. Consequently, we may perform the construction so that $\tau$ is a knot.
\end{proof}

\begin{definition}
Suppose that $H$ is a c-bridge surface for an irreducible $(M, \tau)$ with $|H \cap \tau| \geq 5$. The \defn{bridge distance} $d(H)$, the \defn{c-distance} $d_c(H)$, and the \defn{AD-distance} $d_{AD}(H)$ are each the minimum $n$ such that $d(\boundary A, \boundary B) = n$, if the minimum exists. For $d(H)$, we minimize over all compressing discs $A$ and $B$ for $H$ on opposite sides of $H$; for $d_c(H)$, we minimize over all c-discs $A$ and $B$ for $H$ on opposite sides of $H$; and for $d_{AD}(H)$, we minimize over all c-discs and vertical annuli $A$ and $B$ on opposite sides of $H$.
\end{definition}

\begin{remark}
In all cases with $|H \cap \tau| \geq 5$, $d_{AD}(H)$ is defined. If $H$ has a compressing disc on both sides, then $d(H)$ is defined. If $H$ has a c-disc to both sides, then $d_c(H)$ is defined. When $\boundary M$ has no spheres with two or fewer punctures, $d_c(H) = d_{sc}(H)$, as $H$ has no semi-cut discs or semi-compressing discs. See \cite[Lemma 3.5]{TT1}. 
\end{remark}

A simple outermost arc argument (as in \cite[Prop. 4.1]{Tomova}) allows a comparison between the three distances.

\begin{lemma}\label{comparing distances}
Suppose $(C, \tau)$ is a c-trivial tangle such that $\boundary_- C$ contains no spheres with two or fewer punctures. If $\boundary_+ C$ has a compressing disc in $(C, \tau)$, then the boundary of each cut disc is disjoint from the boundary of some compressing disc for $\boundary_+ C$. Likewise, if $\boundary_+ C$ has a c-disc, then the boundary component of every vertical annulus in $(C,\tau)$ in $H$ is disjoint from the boundary of some c-disc. Consequently, if $H$ has a compressing disc to both sides, then
\[
d(H) \geq d_c(H) \geq d(H) - 2.
\]
Likewise, if $H$ has a c-disc to both sides, then
\[
d_c(H) \geq d_{AD}(H) \geq d_c(H) - 2.
\]
\end{lemma}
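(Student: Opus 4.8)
The strategy is to isolate the two disjointness assertions as the real content, prove them by a single outermost--arc argument in the spirit of \cite[Prop.~4.1]{Tomova}, and then obtain the displayed inequalities by a triangle--inequality argument in the curve complex metric $d$. The inequalities $d(H)\geq d_c(H)$ and $d_c(H)\geq d_{AD}(H)$ hold for free, since in each case the later invariant is an infimum of $d(\partial A,\partial B)$ over a strictly larger class of admissible pairs $(A,B)$ (compressing discs $\subset$ c-discs $\subset$ c-discs and vertical annuli). For the reverse bounds, suppose $H$ has a compressing disc on each side and let $A,B$ be c-discs on opposite sides realizing $d_c(H)=d(\partial A,\partial B)$. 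The hypothesis on $\partial_-$ passes to each of the two c-trivial tangles into which $H$ splits the ambient tangle, so the first assertion applies on each side: if $A$ is a cut disc it gives a compressing disc $A'$ on that side with $\partial A'$ disjoint from $\partial A$, hence $d(\partial A',\partial A)\leq 1$, and similarly for $B$. Then $d(H)\leq d(\partial A',\partial B')\leq 1+d_c(H)+1$. The bound $d_{AD}(H)\geq d_c(H)-2$ is obtained identically, trading each vertical annulus for a c-disc via the second assertion and using the $H$-boundary curve of the annulus in the metric.

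\textbf{Proof of the first assertion.} Because $\partial_+C$ carries a compressing disc we have $|\partial_+C\cap\tau|\geq 4$, so $(C,\tau)$ is not an elementary ball tangle; together with the hypothesis on $\partial_-C$, the discussion preceding the lemma shows $(C,\tau)$ is prime and irreducible. Thus $(C,\tau)$ contains no once-punctured sphere, no essential unpunctured sphere, and every sc-disc for $H=\partial_+C$ is a c-disc. Fix a cut disc $D$ and, among all compressing discs for $H$, choose $\Delta$ (transverse to $D$) with $|\Delta\cap D|$ minimal. An innermost circle of $\Delta\cap D$ on $\Delta$ bounds a subdisc $E\subset D$ with at most one puncture: if $E$ has a puncture, capping it off with the bounded subdisc of $\Delta$ produces a once-punctured sphere, which is excluded; if $E$ is unpunctured, surgering $\Delta$ along it keeps $\Delta$ a compressing disc but decreases $|\Delta\cap D|$, contradicting minimality. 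Hence $\Delta\cap D$ consists of arcs, each with both endpoints on $\partial\Delta\cap\partial D$. If there is an arc, let $\alpha$ be one outermost on $\Delta$, cutting off $\Delta_0\subset\Delta$ with $\mathrm{int}\,\Delta_0$ disjoint from $D$; then $\alpha$ splits $D$ into a once-punctured disc $D_1$ and an unpunctured disc $D_2$, and banding $\Delta_0$ to $D_2$ along $\alpha$ gives an unpunctured properly embedded disc $\Delta'$ with $\partial\Delta'$ disjoint from $\partial D$ and $|\Delta'\cap D|<|\Delta\cap D|$. If $\Delta'$ is an sc-disc it is a compressing disc (no semi-compressing discs), and its boundary is the one we want. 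If not, $\Delta'$ is $\tau$-parallel to a disc $E'\subset H$; but then $\Delta_0$ and $D_2\cup E'$ cobound the $\tau$-free parallelism ball, so $\Delta_0$ is isotopic rel $\partial\Delta_0$ to $D_2\cup E'$ through that ball, and performing this isotopy on $\Delta$ (then pushing the moved piece off $D$ and into the interior off $H$) yields a compressing disc isotopic to $\Delta$ with strictly fewer intersections with $D$, contradicting minimality. So $\Delta\cap D=\varnothing$ after the reductions, and $\partial\Delta$ is already disjoint from $\partial D$.

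\textbf{Proof of the second assertion.} Run the same argument with $D$ replaced by a vertical annulus $V$, with $\Delta$ ranging over c-discs (compressing or cut) for $H$. Since $V$ is incompressible, an innermost circle of $\Delta\cap V$ on $\Delta$ bounds a subdisc of $V$, and circles are removed as before. Every arc of $\Delta\cap V$ has both endpoints on the single curve $\partial V\cap H$, hence is inessential in the annulus $V$ and cuts off a disc $V_0\subset V$; banding the outermost piece $\Delta_0$ of $\Delta$ to $V_0$ produces a c-disc $\Delta'$ (an sc-disc, hence a c-disc, possibly a cut disc --- which is fine now) with $\partial\Delta'$ disjoint from $\partial V\cap H$, or else a minimality-violating isotopy of $\Delta$ exactly as above.

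\textbf{Main obstacle.} I expect the delicate point to be the final move in each case: checking that when the banded disc $\Delta'$ fails to be an sc-disc, the induced isotopy of $\Delta$ genuinely lowers the intersection count, i.e. that pushing the parallelism disc off $D$ (resp.\ $V$) and off $H$ creates no new intersections. The circle-removal and "outermost arc simplifies the disc'' steps are routine; the genuine subtlety is that in the first assertion one must deliver a \emph{compressing} disc (not merely a c-disc) disjoint from $\partial D$, and it is precisely here that primeness and irreducibility of $(C,\tau)$ --- hence the hypothesis that $\partial_-C$ has no sphere with at most two punctures --- are essential.
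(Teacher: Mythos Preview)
Your approach is exactly what the paper indicates: it gives no detailed argument, only the sentence ``A simple outermost arc argument (as in \cite[Prop.~4.1]{Tomova}) allows a comparison between the three distances,'' and your outermost-arc reduction together with the triangle-inequality derivation of the displayed bounds is precisely that argument. The structure---prove the two disjointness claims, then bound distances by replacing each cut disc (resp.\ vertical annulus) with a nearby compressing disc (resp.\ c-disc) at cost at most $1$ per side---is correct.

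One small tightening of the point you flag as the ``main obstacle'': in the first assertion, when the banded disc $\Delta'=\Delta_0\cup D_2$ has inessential boundary, evenness forces the disc $E'\subset H$ it bounds to be unpunctured (a once-punctured $E'$ would give a once-punctured sphere $\Delta'\cup E'$). The ball $B$ bounded by $\Delta'\cup E'$ is then disjoint from $\tau$; since $\partial D$ is essential in $H$ it cannot lie in the unpunctured disc $E'$, and since $\partial\Delta'\cap\partial D=\varnothing$ we conclude $\partial D\cap\partial B=\varnothing$, hence $D$ (which carries a puncture) lies entirely outside $B$. The isotopy of $\Delta_0$ across $B$ therefore creates no new intersections with $D$, and the intersection count drops as claimed. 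A parallel observation handles your circle-removal step and the second assertion.
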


There are numerous ways to create knots with high distance bridge spheres \cites{BTY, IJK, IS, JM}. We'll use a method involving crossing changes achieved by Dehn twists in a bridge surface. Suppose that $(M, \tau)$ is an even, irreducible tangle with $H \subset (M, \tau)$ an oriented c-bridge surface. Assume that every component of $\boundary M$ has at least four punctures and that no tangle on either side of $H$ is a product tangle or trivial ball tangle. Assume $H$ has at least 6 punctures. For an oriented curve $c \subset H$, let $(M(c), \tau(c))$ denote the result of composing the gluing map with a right Dehn twist around $c$. Observe that $H$ is still a bridge surface for $(M(c), \tau(c))$ with the same extent. Blair, Campisi, Johnson, Taylor, and Tomova \cite{MHL2} prove the following theorem:

\begin{theorem}\label{MHL2}
Suppose that $H$ is a bridge sphere for a link $L \subset S^3$ such that $|H \cap L| \geq 6$. For any $N \in \N$, there exists a simple closed curve $c \subset H$ bounding a twice-punctured disc in $H$ such that if the gluing map along $H$ is composed with a Dehn twist along $c$ to create the link $L' \subset S^3$, then as a bride surface for $L'$, $d(H) \geq N$.
\end{theorem}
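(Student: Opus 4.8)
The plan is to translate the bridge distance into the curve complex and then exploit hyperbolicity. Since $|H\cap L| = 2n \ge 6$, the surface $H$ is a $2n$--punctured sphere with $2n>4$, so the distance $d$ of Definition~\ref{curve distance} is exactly the path metric on the curve complex $\mathcal C(H)$. For a bridge sphere the bridge distance equals $d(H) = d_{\mathcal C(H)}(\mathcal D^-,\mathcal D^+)$, where $\mathcal D^\pm \subset \mathcal C(H)$ are the sets of boundary curves of compressing disks of the two trivial tangles cut off by $H$. If we hold the lower tangle fixed and reglue the upper tangle by $T_c$, then $\mathcal D^-$ is unchanged while $\mathcal D^+$ is replaced by $T_c(\mathcal D^+)$, so $H$, as a bridge sphere for $L'$, has $d(H) = d_{\mathcal C(H)}\big(\mathcal D^-, T_c(\mathcal D^+)\big)$. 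It therefore suffices to produce a curve $c$ bounding a twice--punctured disk with $d_{\mathcal C(H)}\big(\mathcal D^-, T_c(\mathcal D^+)\big) \ge N$.

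First I would record the two geometric inputs that make this feasible. On one hand, $\mathcal C(H)$ is Gromov hyperbolic of infinite diameter (Masur--Minsky) and, since $2n\ge 5$, it carries pseudo--Anosov mapping classes, which act loxodromically. On the other hand, the disk set of a trivial tangle is coarsely bounded in $\mathcal C(H)$: the boundary of a compressing disk for a trivial tangle records only a partition of the bridge arcs, and twisting around any curve moves such a boundary curve a controlled amount, so $\mathcal D^\pm$ each stays within bounded distance of a fixed finite set of curves. Combining these, for any $r$ one can choose $c$ to bound a twice--punctured disk with $d_{\mathcal C(H)}(c,\mathcal D^-) \ge r$ and $d_{\mathcal C(H)}(c,\mathcal D^+) \ge r$: for instance take $c = \psi^{\,m}(c_0)$ for a fixed pseudo--Anosov $\psi$, a fixed twice--punctured--disk curve $c_0$, and $m$ large, using that $\psi^{\,m}(c_0)$ eventually leaves every bounded set.

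The crux --- and the step I expect to be the main obstacle --- is to convert ``$c$ is far from both disk sets'' into ``$d_{\mathcal C(H)}\big(\mathcal D^-,T_c(\mathcal D^+)\big)\ge N$.'' This is genuinely subtle because a single Dehn twist is coarsely small: its annular subsurface projection changes by $O(1)$, and it acts with bounded orbits on every proper subsurface it does not fill, so one cannot simply feed the annulus about $c$ into the bounded geodesic image theorem. I would argue by contradiction: a geodesic of length $<N$ from $\mathcal D^-$ to $T_c(\mathcal D^+)$, untwisted along $c$, becomes a short path from $T_c^{-1}(\mathcal D^-)$ to $\mathcal D^+$; chasing this path through an innermost--disk/outermost--arc analysis inside the twice--punctured disk $P$ bounded by $c$ and inside $H\smallsetminus P$ --- using that boundaries of compressing disks of the untwisted trivial tangles meet $c$ in a controlled way --- should force $c$ to lie uniformly close to $\mathcal D^-$ or to $\mathcal D^+$, contradicting the choice of $m$. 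This ``twisting a bridge surface along a sufficiently complicated curve raises its distance'' estimate is the real content; it rests on the intersection--number and subsurface--projection machinery of Masur--Minsky together with the rigidity of disk sets of trivial tangles, and is where I expect the work to concentrate. Once it is in hand, taking $r$ (hence $m$) large enough forces $d(H)\ge N$ for $L'$; and since $H$ is a sphere and a Dehn twist preserves $|H\cap L|$, the regluing $L'$ is again a link in $S^3$ with $H$ a bridge sphere of distance at least $N$, as required.
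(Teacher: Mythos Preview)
The paper does not prove this theorem; it is quoted as a result of Blair--Campisi--Johnson--Taylor--Tomova \cite{MHL2}, so there is no in-paper argument to compare against. I can still comment on the soundness of your sketch.

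There is a genuine error in your second paragraph: the assertion that ``the disk set of a trivial tangle is coarsely bounded in $\mathcal C(H)$'' is false. For a trivial $n$-string tangle with $n\ge 3$, the subgroup of $\operatorname{Mod}(H)$ consisting of classes that extend over the tangle preserves $\mathcal D^{\pm}$ and contains pseudo-Anosov elements; hence each $\mathcal D^{\pm}$ has infinite diameter in $\mathcal C(H)$. (This is the punctured-sphere analogue of the well-known fact that the disk set of a handlebody is unbounded.) Your justification --- that a single twist moves a disk-bounding curve a controlled amount --- shows only that the generating set has bounded word-length displacement, not that the orbit is bounded. Once $\mathcal D^{\pm}$ are unbounded, the step ``take $c=\psi^m(c_0)$ and conclude $d(c,\mathcal D^{\pm})\ge r$'' fails as stated: escaping every bounded set does not mean escaping an unbounded quasi-convex set. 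One can still arrange $c$ far from both disk sets, but the correct reason is that the Gromov-boundary limit sets of $\mathcal D^{\pm}$ are proper closed subsets of $\partial\mathcal C(H)$, so a pseudo-Anosov whose attracting lamination lies outside them works; this is a substantially different (and harder) input than boundedness.

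On the ``crux,'' you are right that this is where the content lies, and you correctly flag that a single Dehn twist is coarsely the identity on $\mathcal C(H)$, so the naive displacement bound gives nothing. Your proposed contradiction argument is too vague to evaluate: ``chasing a short path through an innermost-disk analysis inside the twice-punctured disk'' does not by itself produce a bound on $d(c,\mathcal D^{\pm})$, because the intersection of disk boundaries with $c$ can be arbitrarily large even when $d(c,\mathcal D^{\pm})$ is small. The argument in \cite{MHL2} is organized differently and exploits the specific fact that $c$ bounds a twice-punctured disk (so the twist is a crossing change) together with subsurface-projection estimates tailored to that situation; the abstract hyperbolic-geometry route you outline would need substantially more to close the gap.
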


Here is the generalization we need:
\begin{theorem}\label{MHL-redux}
Suppose that $(M, \tau)$ is an irreducible even tangle such that every component of $\boundary M$ has at least four punctures. Suppose also that $H  \in \H(M, \tau)$ is a c-bridge sphere with at least 6 punctures and that neither component of $(M, \tau) \setminus H$ is a product tangle. Then for any $N \geq \N$ there exists a curve $c \subset H$ bounding a twice-punctured disc, such that after performing a Dehn twist along $c$, the c-bridge surface $H$ has $d_{AD}(H) \geq N$. 
\end{theorem}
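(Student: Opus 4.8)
The plan is to reduce the statement to Theorem~\ref{MHL2} by capping off $\partial M$. Since $(M,\tau)$ is even, every component $S \cpt \partial M$ has an even number $2k_S \geq 4$ of punctures, so we may glue a 3-ball containing $k_S$ bridge arcs onto $S$ (pairing up the $2k_S$ points of $\partial\tau$ on $S$); doing this to every component of $\partial M$ produces a link $L \subset S^3$ with $\tau \subset L$ and $H \cap L = H \cap \tau$. Let $(C_1, \tau_1)$ and $(C_2, \tau_2)$ be the c-trivial tangles of $(M, \tau) \setminus H$. Capping the negative sphere boundary components of a c-trivial tangle by trivial ball tangles again yields a c-trivial tangle (this is part of the structural theory of vp-compressionbodies in \cite{TT1}: $\partial$-reduce, cap the product pieces, and re-reduce), so writing $(C_i^\circ, \tau_i^\circ)$ for the tangle obtained from $(C_i, \tau_i)$ by capping off $\partial_- C_i$, the sphere $H$ is a c-bridge sphere for $(S^3, L)$ with c-trivial tangles $(C_1^\circ, \tau_1^\circ)$ and $(C_2^\circ, \tau_2^\circ)$. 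As every c-bridge sphere for a link in $S^3$ is a bridge sphere (see the proof of Theorem~\ref{amalg}), each $(C_i^\circ, \tau_i^\circ)$ is in fact a trivial ball tangle, namely a 3-ball containing $|H\cap\tau|/2 \geq 3$ bridge arcs, so $H$ is a bridge sphere for $L$ with $|H \cap L| \geq 6$.

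Next I would apply Theorem~\ref{MHL2} to the bridge sphere $H$ for $L$, with $N+4$ in place of $N$. This produces a simple closed curve $c \subset H$ bounding a twice-punctured disc in $H$ such that, after composing the gluing map along $H$ with a Dehn twist along $c$ to form a link $L' \subset S^3$, one has $d(H) \geq N+4$ for $H$ as a bridge sphere for $L'$. Performing the same Dehn twist along $c \subset H$ on $(M, \tau)$ gives a tangle $(M(c), \tau(c))$; since the twist changes only the gluing map and not the two pieces, $H$ remains a c-bridge sphere for $(M(c), \tau(c))$ with c-trivial tangles $(C_1, \tau_1)$ and $(C_2, \tau_2)$, and capping off $\partial M$ exactly as above recovers $(S^3, L')$, with $(C_i^\circ, \tau_i^\circ)$ the side-$i$ trivial ball tangle of $(S^3, L') \setminus H$.

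Finally I would transfer the distance bound. For a c-disc or vertical annulus $X$ for $H$, write $\gamma_X = \partial X$ if $X$ is a disc and $\gamma_X = \partial X \cap H$ if $X$ is an annulus. The key claim is that for every c-disc or vertical annulus $A$ for $H$ on side $i$ in $(M(c), \tau(c))$, there is a compressing disc $A'$ for $H$ on side $i$ in $(S^3, L')$ with $d(\gamma_A, \partial A') \leq 2$. Indeed, a compressing disc of $(C_i, \tau_i)$ is also a compressing disc of $(C_i^\circ, \tau_i^\circ)$; a cut disc of $(C_i, \tau_i)$ is a cut disc of $(C_i^\circ, \tau_i^\circ)$, and since the trivial ball tangle $(C_i^\circ, \tau_i^\circ)$ has a compressing disc for $H$ and empty negative boundary, Lemma~\ref{comparing distances} supplies a compressing disc of $(C_i^\circ, \tau_i^\circ)$ disjoint from it; and for a vertical annulus of $(C_i, \tau_i)$, Lemma~\ref{comparing distances} applies (each component of $\partial_- C_i \subset \partial M$ has at least four punctures, and $H$ has a c-disc in $(C_i, \tau_i)$ because neither side of $H$ is a product tangle and $|H\cap\tau| \geq 6 > 2$, so neither side is an elementary ball tangle), yielding a c-disc of $(C_i, \tau_i)$ whose boundary is disjoint from $\gamma_A$; the previous two cases put that boundary within distance $1$ of a compressing disc of $(C_i^\circ, \tau_i^\circ)$. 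Granting the claim, for c-discs or vertical annuli $A, B$ for $H$ on opposite sides in $(M(c), \tau(c))$ pick compressing discs $A', B'$ for $H$ on the corresponding sides of $(S^3, L')$ as above; the triangle inequality for $d$ then gives
\[
d(\gamma_A, \gamma_B) \;\geq\; d(\partial A', \partial B') - 4 \;\geq\; d(H) - 4 \;\geq\; (N+4)-4 \;=\; N,
\]
with $d(H)$ computed in $(S^3, L')$. Hence $d_{AD}(H) \geq N$ for $(M(c), \tau(c))$, as required.

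The step I expect to be the main obstacle is the first one: verifying that the capping can be arranged so that $H$ stays a c-bridge sphere, i.e.\ that capping a negative sphere boundary component of a c-trivial tangle by a trivial ball tangle again produces a c-trivial tangle. Once that is in hand, the remainder is bookkeeping with Definition~\ref{curve distance} and Lemma~\ref{comparing distances}, including the harmless conventions for inessential curves that the outermost-arc argument behind Lemma~\ref{comparing distances} is designed to absorb.
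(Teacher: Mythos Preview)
Your proof is correct and follows essentially the same route as the paper: cap off $\partial M$ with trivial ball tangles to obtain a bridge sphere $H$ for a link $L\subset S^3$, invoke Theorem~\ref{MHL2} with $N+4$, and then transfer the resulting distance bound back to $(M(c),\tau(c))$ using Lemma~\ref{comparing distances}. The only cosmetic difference is in the bookkeeping of the final step: the paper applies Lemma~\ref{comparing distances} twice in sequence (first passing from $d(H)\geq N+4$ to $d_c(H)\geq N+2$ in $(S^3,L')$, then observing that c-discs in $(M',\tau')$ remain c-discs in $(S^3,L')$ so $d_c(H)\geq N+2$ there as well, and finally passing to $d_{AD}(H)\geq N$), whereas you unwind the same two applications into a single ``every c-disc or vertical annulus on side $i$ is within distance~$2$ of a compressing disc on side $i$ in $(S^3,L')$'' claim and conclude via the triangle inequality. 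Your concern about the capping step is exactly the point the paper addresses with the remark that complete collections of sc-discs for $H$ in $(M,\tau)$ extend to complete collections in $(S^3,L)$.
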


\begin{remark}
Notice that performing the twist does not change the underlying homeomorphism type of the 3-manifold, since $c$ bounds a disc in $M$.  Also observe that on each side of $H$, the new tangle $(M', \tau')$ has the same number of vertical arcs, bridge arcs, and ghost arcs as the corresponding side of $H$ in $(M, \tau)$.
\end{remark}

\begin{proof}[Proof of Theorem \ref{MHL-redux}]
Let $N \in \N$. Observe that $H$ has a c-disc on each side as $|H \cap \tau| \geq 6$ and neither component of  $(M, \tau) \setminus H$ is a product tangle.

We begin by constructing a link $L$ from $\tau$. Since each component of $\boundary M$ intersects $\tau$ an even number of times, we may cap each component $S \subset \boundary M$ off with a tangle $(B(S), \tau(S))$ where $B(S)$ is a 3-ball and $\tau(S)$ is a collection of $|S \cap \tau|/2$ boundary-parallel arcs. This converts $\tau$ into a link $L$ and $M$ into the 3-sphere. The sphere $H$ is easily seen to be a bridge sphere for $L$: complete collections of sc-discs for each side of $H$ in $(M,\tau)$ extend to complete collections of sc-discs for each side of $H$ in $(S^3, L)$. Consequently, $H$ is a c-bridge sphere for $L$. Since $\boundary S^3 = \nil$, there are no ghost arcs, so it is actually a bridge sphere.

By Theorem \ref{MHL2}, there is a simple closed curve $c \subset H$, bounding a twice punctured disc in $H$ such that if $L'$ is the link resulting from a Dehn twist of $H$ around $c$, then $d(H) \geq N + 4$, when considering $H$ as a bridge surface for $L'$. By Lemma \ref{comparing distances}, we have $d_{c}(L') \geq N+2$. Let $(M', \tau')$ be the result of performing the twist in $(M,\tau)$. The tangle $(M', \tau')$ is also the result of removing the trivial tangles $(B(S), \tau(S))$ from each component $S$ of $\boundary M \subset M'$. Each c-disc for $H$ in $(M', \tau')\setminus H$ is also a c-disc for $H$ in $(S^3, L)\setminus H$ since no component of $\boundary M$ has two or fewer punctures. Since neither component of $(M,\tau)\setminus H$ is a product tangle, there is at least one such c-disc to each side of $H$ in $(M', \tau')$. Consequently, $d_{c}(H) \geq N+2$, when considering $H$ as a c-bridge sphere for $(M', \tau')$.  By Lemma \ref{comparing distances}, $d_{AD}(H) \geq N$, when considering $H$ as a c-bridge sphere for $H$ in $(M', \tau')$.
\end{proof}

To deal with the four-punctured spheres, we need the following standard result.

\begin{lemma}\label{2-bridge}
Let $H$ be a 2-sphere in $S^3$. For each $N \in \N$, there exists a knot $K$ such that $H$ is a bridge sphere for $H$, $|H \cap K| = 4$, and $d(H) = d_c(H) \geq N$.
\end{lemma}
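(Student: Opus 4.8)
The plan is to realize $K$ as a $2$-bridge knot whose two defining slopes are far apart in the Farey graph. Since $H$ is a $2$-sphere in $S^3$, it bounds $3$-balls $B_1$ and $B_2$. Fix four points on $H$, so that $H$ becomes a $4$-punctured sphere; recall that under the adjacency of Definition \ref{curve distance} appropriate to the $4$-punctured sphere (essential curves joined when they intersect minimally in exactly two points) the associated curve complex of $H$ is the Farey graph, which has infinite diameter. I would choose essential simple closed curves $c_1,c_2\subset H$ with $d(c_1,c_2)\geq N$, taking care that the resulting $2$-bridge link is a knot rather than a $2$-component link. The latter is always arrangeable: the $2$-bridge link of a slope $q/p$ is a knot exactly when $p$ is odd, and of two Farey-adjacent slopes $q/p$, $q'/p'$ along a geodesic from $c_1$ at least one denominator is odd (since $|qp'-q'p|=1$), so replacing $c_2$ by its predecessor on the geodesic if necessary changes the distance by at most $1$ while making the link a knot. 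I would then build $(B_i,\tau_i)$ as the trivial $2$-string tangle in $B_i$ whose meridian disc has boundary $c_i$: the curve $c_i$ bounds a disc $D_i$ in $B_i$ since $H$ is unknotted, $D_i$ cuts $B_i$ into two balls, and inserting one unknotted spanning arc into each ball (joining the two punctures lying in it) produces a trivial tangle with $D_i$ a compressing disc. Setting $K=\tau_1\cup\tau_2$, by construction $H$ is a bridge sphere for the knot $K$ with $|H\cap K|=4$.

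It then remains to compute $d(H)$ and $d_c(H)$. The crucial input is the classical fact that a trivial $2$-string tangle $(B^3,\tau)$, viewed as a rational tangle, has a unique c-disc up to isotopy, namely the meridian disc separating its two strands; in particular it admits no cut disc. This can be seen quickly in the double branched cover: $\Sigma_2(B^3,\tau)$ is a solid torus with $H$ lifting to its boundary torus, a compressing disc for $H$ lifts to a meridian disc of the solid torus (unique up to isotopy), and a cut disc would lift to an essential annulus in the solid torus with both boundary circles on the boundary torus, forcing it to be $\partial$-parallel and hence the cut disc to be $\tau$-parallel into $H$, contrary to the definition of a c-disc. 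Applying this to $(B_1,\tau_1)$ and $(B_2,\tau_2)$, the only c-discs for $H$ on the two sides are $D_1$ and $D_2$, and they are compressing discs. Hence $d(H)$ and $d_c(H)$ are each computed from the single pair $(D_1,D_2)$, giving $d(H)=d_c(H)=d(\partial D_1,\partial D_2)=d(c_1,c_2)\geq N$.

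The main obstacle I anticipate is the disc-uniqueness bookkeeping: making certain that each trivial tangle contributes exactly one isotopy class of c-disc and no cut disc, and packaging the ``the $2$-bridge link is a knot'' requirement so that it remains compatible with prescribing $d(c_1,c_2)\geq N$. Both are standard facts about rational tangles and the Farey graph, but they are precisely where care is needed; the remainder of the argument is a direct construction.
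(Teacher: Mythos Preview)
Your approach is the paper's: exploit the infinite diameter of the Farey graph (the curve complex of the four--punctured sphere) together with the fact that every c-disc for a four--punctured bridge sphere is a compressing disc. Your extra detail establishing uniqueness of the compressing disc on each side via the branched double cover is correct and makes explicit what the paper leaves implicit.

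There is one glitch. Your claim that a cut disc would lift to an essential annulus in the solid torus with both boundary circles on $\partial V$ is not right. A cut disc $D$ meets the branch locus $\tau$ in exactly one point, so in the branched double cover $\pi\colon V\to B^3$ the restriction $\pi|_{\pi^{-1}(D)}$ is a connected branched double cover of a disc with a single branch point; by Riemann--Hurwitz $\chi(\pi^{-1}(D))=2\cdot 1-1=1$, so the lift is a disc with a single boundary circle on $\partial V$, not an annulus. (Equivalently, working in the unbranched cover of $B^3\setminus\tau$, the lift of the punctured disc is indeed a single annulus, but only one of its boundary circles lies on $\partial V$; the other encircles the lifted branch locus.) One can still push a double--cover argument through---the single boundary circle would have to double--cover $\partial D$, contradicting the fact that essential curves on the four--punctured sphere lift to \emph{two} curves on the torus---but the quickest fix, and what the paper intends, is parity: an essential curve on a four--punctured sphere separates the punctures $2+2$, so capping a putative cut disc with either of the two discs it bounds in $H$ yields a sphere meeting $K$ in three points, impossible for a knot in $S^3$. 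With this correction, $d_c(H)=d(H)$ and the rest of your argument stands.
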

\begin{proof}
The set of isotopy classes of essential curves on a 4-punctured sphere are the vertices of the Farey graph and two vertices are joined by an edge if they can be represented by curves with intersecting minimally exactly twice. The lemma follows from the fact that the Farey graph has infinite diameter and the fact that each c-disc for a bridge sphere with four punctures is a compressing disc.
\end{proof}

\begin{proof}[Proof of Theorem \ref{creation}]
Let $(T, \lambda, \delta)$ be a positive productless width tree with distance threshold. By Theorem \ref{existence 1}, there exists a tangle $(M, \tau)$ with $\mc{H}_1 \in \H(M, \tau)$ associated to $(T, \lambda)$. If $T$ is boundaryless, then we may assume that $\tau$ is a knot.  Suppose that some nonboundary vertex $v$ of $T$ has $\lambda(v) \leq 1$. Since $T$ is productless and positive, if $e$ is an edge incident to $v$, then $\lambda(e) \leq 0$, contradicting the assumption that $T$ is positive. Thus, either $T$ is a single vertex, or every thick surface $H' \cpt \mc{H}_1^+$ has $|H' \cap \tau| \geq 6$. In the former case, we appeal to Lemma \ref{2-bridge}. Suppose, therefore, that we are in the latter case.

Suppose $H' \cpt \mc{H}^+_1$. Let $(M_1, \tau_1)$ be the component of $(M, \tau)\setminus \mc{H}_1^-$ containing it. Since $T$ has no product edges, no component of $(M_1, \tau_1) \setminus H'$ is a product tangle. By Theorem \ref{MHL-redux}, there is an essential simple closed curve $c_{H'} \subset H'$ so that Dehn twisting along $c_{H'}$ converts $(M_1, \tau_1)$ into a tangle $(M'_1, \tau'_1)$ and $H'$ into a c-bridge surface $H' \in \H(M'_2, \tau'_2)$ with $d_c(H') \geq \delta$. If $\tau$ is a knot, the Dehn twist does not change that. Do this for each component $H' \cpt \mc{H}_1^+$. The result is then a multiple c-bridge surface $\mc{H}'$ for a tangle $(M', \tau')$ such that $d_c(H') \geq \delta$ for each $H' \cpt \mc{H}'^+$. If $T$ has no boundary vertices, then $M = M' = S^3$ and we may assume that $\tau'$ is a knot.
\end{proof}

\section{Uniqueness of width trees with small labels and high distance threshold}
In this section, we prove:

\begin{theorem}\label{biggest one}
Let $(M,\tau)$ be an even tangle such that each component of $\boundary M$ has at least four punctures. Let $(T', \lambda')$ be a positive slim width tree associated to $(M,\tau)$ such that $\netextent(T',\lambda') = \netextent_{-2}(M,\tau)$ or $\width(T', \lambda') = \width_{-2}(M,\tau)$. Suppose that $(T, \lambda, \delta) \in \T_2(M,\tau)$ with $\delta > 2\tr(T, \lambda)$. Then $(T', \lambda')$ is equivalent to $(T, \lambda)$. Indeed any  $\mc{J} \in \H(M,\tau)$ associated to $(T', \lambda')$ is $\tau$-parallel to any $\mc{H} \in \H(M,\tau)$ associated to $(T, \lambda)$.
\end{theorem}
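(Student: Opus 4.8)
\emph{Proof proposal.} The plan is to realize both width trees by multiple c-bridge surfaces and then prove the surfaces are $\tau$-parallel. Fix $\mc{H}\in\H(M,\tau)$ associated to $(T,\lambda,\delta)$ and $\mc{J}\in\H(M,\tau)$ associated to $(T',\lambda')$; then $d_{sc}(H)\ge\delta>2\tr(T,\lambda)$ for every thick sphere $H\cpt\mc{H}^+$, and since for a positive width tree every edge label is bounded by the labels of its endpoints, $\tr(T,\lambda)\ge\extent(S)$ for every sphere $S$ of $\mc{H}\cup\boundary M$; in particular $d_{sc}(H)>2\extent(H)$ for each such $H$. I would first record, via Proposition \ref{props} applied to both trees (which lie in $\T_2(M,\tau)$), that every component of $\mc{H}^-\cup\boundary M$ and of $\mc{J}^-\cup\boundary M$ is c-essential, that no component of $(M,\tau)\setminus\mc{H}$ or of $(M,\tau)\setminus\mc{J}$ is a product tangle, and that every thick sphere of $\mc{H}$ and of $\mc{J}$ is sc-strongly irreducible and unperturbed. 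Since both trees are positive, Proposition \ref{props} also shows $(M,\tau)$ is neither split nor composite, and it is neither a product tangle (Proposition \ref{slim exist}) nor an elementary ball tangle, hence prime, so Lemma \ref{thin not parallel} is available.

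The next step is to dispose of the case where $T$ is a single vertex. Then $\mc{H}$ is a single bridge sphere $H$ with $d_{sc}(H)=d_c(H)>2\extent(H)$, so by the standard keenness of bridge spheres of distance exceeding twice their extent (cf. \cite{Tomova2}, with Lemma \ref{2-bridge} handling four punctures), $H$ is, up to $\tau$-isotopy, the unique c-bridge sphere realizing the bridge number $1+\extent(H)$, and every multiple c-bridge surface refines to it; combined with productlessness of $\mc{J}$ and the minimality of $\netextent(T',\lambda')$ or $\width(T',\lambda')$, this forces $(T',\lambda')$ to be equivalent to $(T,\lambda)$. So from now on assume $T$ has an edge; positivity and productlessness then force every thick sphere of $\mc{H}$, and of $\mc{J}$, to have at least six punctures.

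The core of the argument is a comparison of $\mc{H}$ and $\mc{J}$ in the spirit of Tomova \cite{Tomova2} and its refinement in \cite{TT1}. I would isotope $\mc{J}$ transverse to $\tau$ to minimize $|\mc{J}\cap\mc{H}|$, use innermost-disc surgeries along sc-discs (carefully tracking punctures) together with the c-essentiality of the thin surfaces and the sc-strong irreducibility of the thick surfaces to arrange that every curve of $\mc{J}\cap\mc{H}$ is essential on both of its surfaces, and then for each thick sphere $H\cpt\mc{H}^+$ analyze $\mc{J}\cap W_H$ in thin position, where $W_H\cpt(M,\tau)\setminus\mc{H}^-$ is the region on which $H$ is a c-bridge sphere with $d_{sc}(H)\ge\delta$ (and $d_{AD}(H)\ge\delta-2$ by Lemma \ref{comparing distances}), extracting outermost sc-discs on the two sides of $H$. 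The target dichotomy is: either some thick sphere $H$ of $\mc{H}$ satisfies $d_{sc}(H)\le 2\tr(T,\lambda)$, contradicting $\delta>2\tr(T,\lambda)$; or $\mc{J}$ can be isotoped off $\mc{H}^+$ and, moreover, one of $\mc{H},\mc{J}$ is obtained from the other by amalgamating along thin spheres. Making the first alternative quantitatively correct — bounding the extents of the pieces of $\mc{J}$ that feed into the distance estimate by $\tr(T,\lambda)$, which is where the hypothesis $\delta>2\tr(T,\lambda)$, the minimality of $\netextent(T',\lambda')$ or $\width(T',\lambda')$, and Lemma \ref{relations} all enter — is the step I expect to be the main obstacle.

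Finally I would close the argument from the second alternative. A proper amalgamation of $\mc{H}$, or of $\mc{J}$, produces a thick sphere carrying two disjoint sc-discs on opposite sides, contradicting sc-strong irreducibility of $\mc{H}^+$ (which holds since $d_{sc}(H)\ge\delta\ge2$) or of $\mc{J}^+$ (Proposition \ref{props}); hence no amalgamation occurs and $\mc{J}$ is $\tau$-isotopic to $\mc{H}$. In detail: each component of $\mc{J}^-$ is a c-essential closed surface contained in a union of c-trivial tangles of $\mc{H}$ glued along c-essential spheres, hence $\boundary$-parallel there, and Lemma \ref{thin not parallel} together with minimality forbids two parallel thin spheres, so $\mc{J}^-=\mc{H}^-$ after isotopy; then each component of $\mc{J}^+$ is an sc-strongly irreducible, unperturbed c-bridge sphere of a union of two c-trivial tangles of $\mc{H}$, which by uniqueness of bridge spheres for such pieces (\cite{HS2}, as in Lemma \ref{thin not parallel}) must coincide with the corresponding component of $\mc{H}^+$. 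Therefore $\mc{J}$ and $\mc{H}$ are $\tau$-parallel, and by the construction of associated width trees $(T',\lambda')$ is equivalent to $(T,\lambda)$.
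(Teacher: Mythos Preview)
Your overall architecture is right---realize both trees by multiple c-bridge surfaces, force the thin parts to coincide, then compare thick spheres---but the mechanisms you propose for the two central steps are not the ones that actually deliver the conclusions, and in places they would fail.

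For the thin surfaces you plan to minimize $|\mc{J}\cap\mc{H}|$ and run innermost-disc surgeries. What is needed, and what such surgeries alone do not provide, is a \emph{distance bound}: for a piece $F$ of $\mc{J}^-$ inside a region $W_H$ of $(M,\tau)\setminus\mc{H}^-$, one must show that either $F$ can be pushed off the thick sphere $H$ or $d_{AD}(H)\le 2\extent(F)$. The paper obtains this from a sweepout of $W_H$ by $H$ adapted to $F$ (Proposition~\ref{prop: distance bd}), not from outermost sc-discs. Moreover the bound one gets is in terms of $\tr(\mc{J})$, not $\tr(T,\lambda)$; the passage to $\mc{H}$ goes via $\tr(\mc{J})\le\netextent(\mc{J})$ (or $\le\sqrt{\width(\mc{J})/2}$) from Lemma~\ref{relations} together with the minimality hypothesis $\netextent(\mc{J})\le\netextent(\mc{H})$ (or the width version). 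Your proposal to bound directly by $\tr(T,\lambda)$ is exactly the step you flag as the obstacle, and it cannot be done without this detour.

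For the thick surfaces, your dichotomy ``distance bound versus amalgamation'' is not what emerges, and the closing argument you give would not work. The paper compares the sweepouts of $H$ and of $S\cpt\mc{J}^+$ via the Johnson spanning/splitting graphic (Lemma~\ref{Claim 2}). In the \emph{splitting} case the sweepout by $H$ is adapted to $S$ and Proposition~\ref{prop: distance bd} again yields a distance contradiction. In the \emph{spanning} case the argument of \cite{JT} shows directly that a sequence of c-compressions of $S$ produces a copy of $H$, hence $\extent(S)\ge\extent(H)$ with equality iff $S$ is $\tau$-parallel to $H$; no amalgamation relation appears, and none is needed. Your appeal to \cite{HS2} for ``uniqueness of bridge spheres for such pieces'' is not valid here: \cite{HS2} classifies bridge spheres of trivial arcs in a product, not c-bridge spheres in a tangle glued from two arbitrary c-trivial tangles, and it will not force the thick spheres of $\mc{J}$ and $\mc{H}$ to coincide. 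The spanning/splitting graphic is the essential missing ingredient.
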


Assuming the theorem, we have the following corollaries.
\begin{corollary}\label{uniqueness}
Let $(T, \lambda)$ be a productless positive width tree. Then there exists a tangle $(M,\tau)$ such that $(T, \lambda)$ is the unique element of $\T_2(M,\tau)$ up to equivalence and
\[\begin{array}{rcl}
\netextent(T,\lambda) &=& \netextent_{-2}(M,\tau), \text{ and } \\
\width(T, \lambda) &=& \width_{-2}(M,\tau)
\end{array}.
\]
If $T$ is boundaryless, then $M = S^3$ and we can take $\tau$ to be a knot. 
\end{corollary}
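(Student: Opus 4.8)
The plan is to run Theorem~\ref{creation} with a carefully chosen distance threshold and then let Theorem~\ref{biggest one} and Proposition~\ref{slim exist} do the rest. Set $\delta = 2\tr(T,\lambda)+1$. Since $(T,\lambda)$ is positive we have $\tr(T,\lambda)\ge 1$, so $\delta\ge 3\ge 2$ and $(T,\lambda,\delta)$ is a slim width tree with distance threshold. By Theorem~\ref{creation} there is an even tangle $(M,\tau)$ and an $\mc{H}\in\H(M,\tau)$ associated to $(T,\lambda,\delta)$, so $(T,\lambda,\delta)\in\T_2(M,\tau)$; and if $T$ has no boundary vertices we may take $M=S^3$ and $\tau$ a knot.

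Next I would check that $(M,\tau)$ meets the hypotheses of Proposition~\ref{slim exist} and Theorem~\ref{biggest one}. It is an even tangle; each boundary sphere of $M$ comes from a boundary vertex $v$ of $T$ with $\lambda(v)=\lambda(e)\ge 1$ for its incident edge $e$, hence has at least four punctures; it is not a product tangle, since a product tangle admits no associated productless width tree; and, using the latitude in Theorem~\ref{creation}'s construction (as in Lemma~\ref{existence 1} and Theorem~\ref{MHL-redux}), each boundary component may be taken to be c-incompressible --- all of this being automatic and vacuous when $T$ is boundaryless. Since $(T,\lambda)$ is positive it has no edge labeled $\le 0$, so Proposition~\ref{props} shows $(M,\tau)$ is neither split nor composite; reading Proposition~\ref{props} the other way, no element of $\T_2(M,\tau)$ has an edge labeled $\le 0$, and since the high distance prevents $(M,\tau)$ from having a twice-punctured bridge sphere, one checks that every element of $\T_2(M,\tau)$ is in fact positive.

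By Proposition~\ref{slim exist} there are $(T_1,\lambda_1),(T_2,\lambda_2)\in\T_2(M,\tau)$ with $\netextent(T_1,\lambda_1)=\netextent_{-2}(M,\tau)$ and $\width(T_2,\lambda_2)=\width_{-2}(M,\tau)$, both positive by the previous step. Because $\delta>2\tr(T,\lambda)$, Theorem~\ref{biggest one} applies with $(T,\lambda,\delta)$ against each of $(T_1,\lambda_1)$ and $(T_2,\lambda_2)$ and shows both are equivalent to $(T,\lambda)$; hence $\netextent(T,\lambda)=\netextent_{-2}(M,\tau)$ and $\width(T,\lambda)=\width_{-2}(M,\tau)$, which are the two displayed equalities. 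For uniqueness, let $(T',\lambda',\delta')\in\T_2(M,\tau)$ be arbitrary, associated to $\mc{H}'\in\H(M,\tau)$; it is positive, and by minimality $\netextent(T',\lambda')\ge\netextent(T,\lambda)$ and $\width(T',\lambda')\ge\width(T,\lambda)$. If either is an equality, $(T',\lambda')$ is a $\netextent$- or $\width$-minimizer and Theorem~\ref{biggest one} gives $(T',\lambda')$ equivalent to $(T,\lambda)$, and indeed $\mc{H}'$ $\tau$-parallel to $\mc{H}$. To exclude the case that both are strict, I would use that $\mc{H}'$ is, by Proposition~\ref{props}, productless, sc-strongly irreducible, unperturbed, with c-incompressible thin spheres --- so it carries the structural features of local thinness --- together with the fact that each thick sphere $H\cpt\mc{H}^+$ satisfies $d_{sc}(H)\ge\delta>2\tr(T,\lambda)\ge 2\extent(H)$; this high distance, via the innermost-disc comparison of c-discs and vertical annuli across high-distance bridge surfaces that the proof of Theorem~\ref{biggest one} carries out, makes $\mc{H}$ the unique locally thin multiple c-bridge surface up to $\tau$-parallel, and then the ``thins to'' order of Theorem~\ref{thm: poset} forces $\mc{H}'$ to be $\tau$-parallel to $\mc{H}$ and $(T',\lambda')$ equivalent to $(T,\lambda)$.

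The hard part will be this last step: Theorem~\ref{biggest one} as stated delivers only that $(T,\lambda)$ is the unique \emph{minimizer} of $\netextent$ and of $\width$, and promoting this to ``unique element of $\T_2(M,\tau)$'' rests on the rigidity of the high-distance construction --- namely that $(M,\tau)$ has, up to isotopy transverse to $\tau$, a single locally thin multiple c-bridge surface. The only other point requiring care is arranging c-incompressible boundary components in Theorem~\ref{creation}'s construction, and that issue disappears entirely in the boundaryless case yielding a knot in $S^3$.
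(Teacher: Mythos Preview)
Your core strategy matches the paper's: pick a large distance threshold, invoke Theorem~\ref{creation} to produce $(M,\tau)$ and $\mc{H}$, and then apply Theorem~\ref{biggest one}. The paper takes $\delta = 2\netextent(T,\lambda)+3$ rather than your $\delta = 2\tr(T,\lambda)+1$; both satisfy $\delta > 2\tr(T,\lambda)$, so either works for invoking Theorem~\ref{biggest one}. Your intermediate step of using Proposition~\ref{slim exist} to produce slim minimizers and then hitting them with Theorem~\ref{biggest one} is exactly how the displayed equalities are obtained, and the paper's proof (though more terse) does the same thing.

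Where you diverge is in the final paragraph, and you have correctly put your finger on a real issue. Theorem~\ref{biggest one} as stated only compares $(T,\lambda,\delta)$ against \emph{minimizers} of net extent or width; it does not by itself force an arbitrary $(T',\lambda')\in\T_2(M,\tau)$ to be equivalent to $(T,\lambda)$. The paper's own proof of the Corollary is equally silent on this point---it concludes only that any locally thin $\mc{H}'$ \emph{realizing} $\netextent_{-2}$ or $\width_{-2}$ induces a tree equivalent to $(T,\lambda)$. (Indeed, the introduction summarizes Corollary~\ref{uniqueness} as asserting a unique width tree \emph{realizing} bridge number and width, which is the weaker statement.) So the ``unique element of $\T_2(M,\tau)$'' phrasing in the Corollary appears to be a slight overstatement relative to what is actually proved.

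Your attempt to close this gap is not a proof. The sentence ``this high distance \ldots\ makes $\mc{H}$ the unique locally thin multiple c-bridge surface up to $\tau$-parallel, and then the `thins to' order of Theorem~\ref{thm: poset} forces $\mc{H}'$ to be $\tau$-parallel to $\mc{H}$'' strings together plausible-sounding assertions without justification. In particular, thinning $\mc{H}'$ via Theorem~\ref{thm: poset} gives some locally thin $\mc{K}$ with $\netextent(\mc{K})\le\netextent(\mc{H}')$, but you have no control showing $\mc{K}$ is a minimizer, nor any mechanism to pull information back from $\mc{K}$ to $\mc{H}'$. Likewise, ``one checks that every element of $\T_2(M,\tau)$ is in fact positive'' and the claim about arranging c-incompressible boundary in Theorem~\ref{creation} are asserted rather than argued. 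If you want the Corollary exactly as stated, you would need to strengthen Theorem~\ref{biggest one} (its proof does contain the ingredients, but the minimality hypothesis is genuinely used in Proposition~\ref{thin surface isotopy} to bound $\tr(\mc{J})$); otherwise, the honest conclusion is uniqueness among minimizers, which is what the paper actually establishes.
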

\begin{proof}
Set $\delta = 2\netextent(T,\lambda) + 3$. Notice that $(T, \lambda, \delta)$ is slim. By Theorem \ref{creation}, there exists a tangle $(M,\tau)$ such that $(T, \lambda, \delta) \in \T_2(M,\tau)$. If $T$ is boundaryless then $M = S^3$ and we can take $\tau$ to be a knot. By Theorem \ref{biggest one}, any locally thin multiple c-bridge surface for $(M,\tau)$ realizing either $\netextent_{-2}(M,\tau)$ or $\width_{-2}(M,\tau)$ induces a width tree equivalent to $(T, \lambda)$.
\end{proof}

\begin{corollary}\label{cor: sharp}
For any ditree $T$, there exists a positive, productless labelling $\lambda$ and a tangle $(M,\tau)$ such that $(T,\lambda) \in \T_2(M,\tau)$ and so that the inequalities of Theorem \ref{minflowmaxcut} are equalities. If $T$ is boundaryless, then $M = S^3$ and $\tau$ can be taken to be a knot.
\end{corollary}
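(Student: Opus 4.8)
The plan is to combine two results already established in the paper: the sharpness clause of Theorem~\ref{minflowmaxcut} together with the realization statement of Corollary~\ref{uniqueness}. The corollary is essentially a packaging statement and requires no new construction.

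First, given an arbitrary ditree $T$, apply the ``furthermore'' clause of Theorem~\ref{minflowmaxcut} to obtain a positive, productless labelling $\lambda$ on $T$ for which
\[
\netextent(T,\lambda) = N^2_-(T) + N^2_+(T) + \max|\boundary S|,
\]
where the maximum runs over all strong source-sink cuts $S$ of $T$. By construction $(T,\lambda)$ is a positive, productless width tree, and the inequality asserted by Theorem~\ref{minflowmaxcut} holds with equality for it; crucially, that statement depends only on the pair $(T,\lambda)$ and not on any ambient tangle.

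Second, feed the positive, productless width tree $(T,\lambda)$ into Corollary~\ref{uniqueness}. That corollary produces a tangle $(M,\tau)$ for which $(T,\lambda)$ is the unique element of $\T_2(M,\tau)$ up to equivalence; in particular $(T,\lambda)\in\T_2(M,\tau)$. Hence $\lambda$ and $(M,\tau)$ witness the claim. When $T$ is boundaryless, Corollary~\ref{uniqueness} moreover supplies $M=S^3$ with $\tau$ a knot, which gives the final sentence.

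The one point to check is simply that the hypotheses dovetail: Theorem~\ref{minflowmaxcut} outputs precisely a \emph{positive} and \emph{productless} labelling, which is exactly the input required by Corollary~\ref{uniqueness}, so the two results compose without friction. Consequently I expect no genuine obstacle; the real content lives in the proof of Theorem~\ref{minflowmaxcut} (the Ford--Fulkerson-style argument producing the extremal $\lambda$) and of Corollary~\ref{uniqueness} (which rests in turn on Theorem~\ref{creation} and Theorem~\ref{biggest one}). One could instead give a self-contained argument by inlining the construction --- take $\lambda$ from the proof of Theorem~\ref{minflowmaxcut}, fix a distance threshold $\delta > 2\tr(T,\lambda)$ so that $(T,\lambda,\delta)$ is slim, and apply Theorem~\ref{creation} and then Theorem~\ref{biggest one} --- but quoting Corollary~\ref{uniqueness} directly is cleaner.
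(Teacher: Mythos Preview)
Your proposal is correct and follows exactly the same route as the paper: invoke the sharpness clause of Theorem~\ref{minflowmaxcut} to obtain the extremal positive, productless labelling, then apply Corollary~\ref{uniqueness} to realize it in some $\T_2(M,\tau)$, with the boundaryless case yielding a knot in $S^3$. The paper's proof is in fact just the two-sentence version of what you wrote.
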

\begin{proof}
By Theorem \ref{minflowmaxcut}, there is a labelling making the inequalities equalities. The result follows from Corollary \ref{uniqueness}. 
\end{proof}

The remainder of this section is devoted to proving Theorem \ref{biggest one}. Assume the hypotheses in  the statement of Theorem \ref{biggest one}. By Lemma \ref{thm: slim main}, $(M,\tau)$ is irreducible and prime. If $(M,\tau)$ were a product tangle, by Lemma \ref{thm: slim main} there would be no associated slim width trees, so $(M,\tau)$ is not a product.  Our proof is closely modelled on \cite{JT} (see also \cite{BZ}), with modifications inspired by \cite{MHL1}. The basic idea is that when thick spheres have high distance relative to the number of punctures, thick spheres and thin spheres having fewer punctures can be isotoped off them. If $\mc{H}$ and $\mc{J}$ are two multiple c-bridge surfaces with $\mc{H}$ being high distance, we first apply the philosophy to show that unless the surfaces in $\mc{J}$ have a large number of punctures compared with $\mc{H}$, we can isotope the thin surfaces $\mc{J}^-$ into $\mc{H}^-$. We then apply the same philosophy to the thick surfaces $\mc{J}^+$ and show that this implies that either $\mc{J}$ has higher net extent or width or the width trees associated to $\mc{H}$ and $\mc{J}$ are equivalent and that this equivalence arises from an isotopy of $\mc{J}$ to $\mc{H}$.

We begin with a lemma that will simplify subsequent discussion.

\begin{lemma}
If $T$ has a nonboundary vertex with label equal to 1, then the conclusions of Theorem \ref{biggest one} hold.
\end{lemma}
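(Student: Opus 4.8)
The plan is to show that the hypothesis forces both $T$ and $T'$ to be the single-vertex width tree with label $1$, after which equivalence is immediate and the required $\tau$-parallelism reduces to the uniqueness of the minimal bridge sphere of a $2$-bridge link. First I would observe that $T$ must be a single vertex: let $v$ be a nonboundary vertex with $\lambda(v)=1$. Since $(M,\tau)$ is irreducible and prime, Proposition~\ref{props} forbids any edge of $T$ from carrying a label in $\{-1,0\}$, so every edge incident to $v$ has label at least $1$; by condition~(2) in the definition of a width tree the nonnegative labels on the incoming edges of $v$ sum to at most $\lambda(v)=1$, and likewise for the outgoing edges, so $v$ has at most one incoming and at most one outgoing edge. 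But a lone incoming (or outgoing) edge $e$ at $v$ would have $\lambda(e)=1=\lambda(v)$ and hence be a product edge, contradicting productlessness of the slim tree $(T,\lambda)$. Therefore $v$ is isolated and $T=\{v\}$; since $v$ is a nonboundary vertex, $T$ has no boundary vertices, so $\boundary M=\nil$, $M=S^3$, and $\tau\subset S^3$ is a link whose unique thick sphere $H_0$ is a $4$-punctured bridge sphere.

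Next I would pin down that $(S^3,\tau)$ is $2$-bridge. Certainly $\b(\tau)\le 2$. If $\b(\tau)=1$ then $\tau$ is the unknot and $H_0$ is a $4$-punctured bridge sphere for the unknot, hence perturbed by \cite{HS2}; but $H_0\cpt\mc{H}^+$ for an $\mc{H}$ associated to the slim tree $(T,\lambda)$, so $H_0$ is unperturbed by Proposition~\ref{props} --- a contradiction. Hence $\b(\tau)=2$, so $\netextent_{-2}(S^3,\tau)=1$ by Theorem~\ref{amalg}, while $\width_{-2}(S^3,\tau)\le\width(T,\lambda)=2$ since $H_0$ itself is an admissible multiple c-bridge surface.

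Then I would show $(T',\lambda')$ is the same tree. By hypothesis either $\netextent(T',\lambda')=1$ or $\width(T',\lambda')\le 2$; in either case Lemma~\ref{relations} yields $2\tr(T',\lambda')^2\le\width(T',\lambda')\le 2$, and positivity of $(T',\lambda')$ then forces $\tr(T',\lambda')=1$, i.e.\ every vertex of $T'$ has label $1$. Repeating the argument of the first paragraph --- Proposition~\ref{props} bounds every edge label of $T'$ below by $1$, and a leaf edge of $T'$ would then be a product edge --- shows $T'$ has no edges, so $T'=\{v'\}$ with $\lambda'(v')=1$ and $(T',\lambda')$ is equivalent to $(T,\lambda)$. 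Finally, any $\mc{J}$ associated to $(T',\lambda')$ and any $\mc{H}$ associated to $(T,\lambda)$ are both $4$-punctured bridge spheres of the $2$-bridge link $(S^3,\tau)$, so invoking the uniqueness up to $\tau$-parallelism of the minimal bridge sphere of a $2$-bridge link completes the proof. The one place requiring care is this last invocation: one must cite (or reprove) that a $2$-bridge link --- whether a knot or a two-component link --- has a unique minimal bridge sphere up to $\tau$-parallelism; alternatively this can be squeezed out of the hypothesis $d_{sc}(H_0)\ge\delta\ge 3$ by an outermost-arc comparison as in Lemma~\ref{comparing distances}. Everything else is routine bookkeeping with Proposition~\ref{props}, Lemma~\ref{relations}, and the notions of width tree and productlessness.
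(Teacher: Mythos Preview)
Your argument is correct and follows the same overall strategy as the paper's proof: show $T$ is a single vertex with label $1$, identify $(M,\tau)$ as a $2$-bridge link in $S^3$, show $T'$ is likewise a single vertex with label $1$, and then invoke uniqueness of the minimal bridge sphere. The final citation you flag as needing care is precisely what the paper supplies, namely \cite[Theorem~4.3]{ST} (Scharlemann--Tomova, uniqueness of bridge surfaces for $2$-bridge knots and links).

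There is one mild difference worth noting. To conclude that $T'$ is a single vertex, the paper leans on the fact that a $2$-bridge link is mp-small, so any slim width tree for it has no edges. Your route via Lemma~\ref{relations} --- bounding $\tr(T',\lambda')\le 1$ from the $\netextent$ or $\width$ hypothesis, then using positivity and productlessness --- is a self-contained alternative that avoids appealing to the mp-small property. Both approaches are short; yours has the virtue of using only machinery already developed in the paper. Your explicit exclusion of the unknot via Proposition~\ref{props} (unperturbedness) is also more careful than the paper, which simply asserts $\tau$ is $2$-bridge.
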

\begin{proof}
Suppose that $T$ has a nonboundary vertex $v$ of label 1. Let $\mc{H} \in \H(M,\tau)$ be associated to $T$ and choose $H \cpt \mc{H}^+$ such that $H$ is a four-punctured sphere. Since $(T,\lambda)$ is productless and positive, any edge $e$ incident to $v$ must have $\lambda(e) \leq 0$, a contradiction. Since $T$ is positive and slim,  $\tau$ must be a 2-bridge knot or link in $M = S^3$. Since $(T', \lambda')$ is slim, $\mc{H}'$ is also necessarily a bridge sphere for $(M,\tau)$. By \cite[Theorem 4.3]{ST}, $\mc{H}^+$ and $\mc{J}^+$ are isotopic, as desired.  
\end{proof}

Henceforth, we assume that no vertex of $T$ has label 1; that is each component of $\mc{H}^+$ has at least six punctures. As with all arguments of this sort, sweepouts are the key tool. The height functions used in the introduction to define bridge number are examples of sweepouts. A \defn{spine} for a c-trivial tangle $(C, \tau_C)$ is the union of $\boundary_- C$ with a certain graph $\Gamma_0 \subset C$. The graph portion $\Gamma_0$ of a the spine is empty if and only if $(C, \tau_C)$ is a trivial product compressionbody.  The key property of a spine is that  $(C, \tau_C) \setminus \Gamma_0$ is pairwise homeomorphic to $(\boundary_+ C \times (0,1], \text{points} \times (0,1])$ by a homeomorphism taking $\boundary_+ C$ to $\boundary_+ C \times \{1\}$ and the points $\boundary_+ C \cap \tau_C$ to $\text{(points)} \times [0,1)$. For more detail regarding the definition of spine that we use, see \cite[Definition 7.1]{TT1}.

\begin{definition}
Suppose that $(M', \tau')$ is a tangle with a c-bridge surface $H$. A \defn{sweepout} of $(M',\tau')$ by $H$ is a continuous function
\[
\phi \co M \to I
\]
such that $\phi^{-1}(-1)$ is a spine for the c-trivial tangle below $H$, $\phi^{-1}(+1)$ is a spine for the c-trivial tangle above $H$, and for $t \in (-1,1)$, the inverse images of $t$ are closed surfaces $H_t = \phi^{-1}(t)$ giving a foliation of $M' \setminus \phi^{-1}(\boundary I)$ transverse to the bridge arcs and vertical arcs of $\tau'$. This foliation defines a \defn{canonical projection} (that is, homeomorphism) $\pi_t \co H_t \to H$. 
\end{definition}

It follows easily from the definition of spine that a sweepout by $H$ exists. 

\begin{definition}
Suppose that $S \subset (M',\tau')$ and that $H$ is a c-bridge surface for $(M',\tau')$. Suppose that $\phi$ is a sweepout by $H$ with the property $\phi|_S$ is a Morse function. Also assume it has the property that all but at most one critical value $s \in (-1,1)$ of $\phi|_S$ has one critical point in its preimage and if there is such an \defn{exceptional} critical value $s$, then $\phi^{-1}|_S(s)$ contains exactly two critical points. We say that $\phi$ is \defn{adapted to $S$} if:
\begin{enumerate}
\item[(A)] either there is no exceptional critical value or if there is, for regular values $t, t'$ separated by the exceptional critical value $s$ and no other critical values, the projections of the curves $S \cap H_t$ and $S \cap H_{t'}$ can be isotoped to be disjoint.
\end{enumerate}
\end{definition}

The next proposition gives the fundamental connection between sweepouts and distance. Although somewhat inconvenient, we do need to allow our Morse functions to have two critical points at the same height. This is used when we analyze the graphic that arises by comparing two sweepouts. See \cite{BS} or Section 4 of \cite{MHL1} for  more detailed proofs along the same lines.

\begin{proposition}\label{prop: distance bd}
Suppose that $(M', \tau')$ is a tangle such that $\boundary M'$ is c-incompressible. Suppose $H \subset (M',\tau')$ is an sc-strongly irreducible c-bridge surface and that $S \subset (M',\tau')$ is an orientable connected surface with essential boundary in $\boundary M'$. Suppose that $|H \cap \tau'| > 4$ and that $S$ is not an unpunctured sphere. If there exists a sweepout $\phi$ by $H$ adapted to $S$, then after a perturbation of $\phi$, one of the following occurs:
\begin{enumerate}
\item There exists a regular value $t$ such that each curve of $H_t \cap S$ is inessential in $H_t$,
\item There exists a trivial ball tangle $(B, B \cap \tau') \subset (M',\tau')$ such that $S \subset (B, B \cap \tau')$, or
\item  $d_{AD}(H) \leq 2\extent(S) + 2\iota$,
where $\iota = 0$ if there is no exceptional critical value and $\iota = 1$ if there is.
\end{enumerate}
\end{proposition}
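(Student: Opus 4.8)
The plan is to run a sweepout argument in the spirit of Gabai--Scharlemann--Tomova, in the form used in \cite{Tomova} and Section~4 of \cite{MHL1} (see also \cite{BS}). Perturb $\phi$ so that $\phi|_S$ is Morse with the generic and adapted structure guaranteed by hypothesis, and assume neither (1) nor (2) holds; I will produce (3). For a regular value $t$, let $c(t)$ be the set of isotopy classes of curves of $S \cap H_t$ that are essential in $H_t$, regarded via the canonical projection $\pi_t$ as curves on $H$; since (1) fails, $c(t) \neq \nil$ for every regular $t$. Label $t$ with $\down$ if some curve in $c(t)$ bounds a c-disc for, or is the $H$-side boundary of a vertical annulus for, the c-trivial tangle $\phi^{-1}([-1,t])$ lying below $H_t$, and label $t$ with $\up$ if the analogous statement holds above, in $\phi^{-1}([t,+1])$. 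Both of these tangles are isotopic to the c-trivial tangles that $H$ itself bounds, so a $\down$- or $\up$-label at $t$ genuinely witnesses a c-disc or a vertical annulus for $H$ on the corresponding side.

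First I would show that, unless $S$ lies in a trivial ball tangle (conclusion (2)), every regular value sufficiently near $-1$ is labeled $\down$ and every regular value sufficiently near $+1$ is labeled $\up$; this is the usual analysis of the sweepout near the spines, and it is here that the hypotheses that $\boundary M'$ is c-incompressible and that $S$ has essential boundary are used (the collar of $\boundary S$ produces a vertical annulus, while the curves near the $1$--dimensional part of the spine bound discs). Next, if a regular value $t$ is labeled neither $\down$ nor $\up$, an innermost-disc argument shows every curve of $c(t)$ is essential in $S$: any curve of $S \cap H_t$ bounding a disc in $S$ bounds an innermost one lying on a single side of $H_t$, hence a c-disc for $H_t$ there, which would produce a label. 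Consequently, since the curves of $S \cap H_t$ are pairwise disjoint in $S$, only a bounded number of isotopy classes, controlled by $-\chi(S) + |S \cap \tau'| = 2\extent(S)$ together with the peripheral classes, can occur at unlabeled levels.

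To finish, observe that the $\down$-labeled and the $\up$-labeled regular values form disjoint open sets (a label is constant on each component of the complement of the critical values). If some regular value carries both labels, the two witnessing curves are disjoint on $H$; then either $H$ fails to be sc-strongly irreducible, which is impossible since both witnesses would be sc-discs on opposite sides with disjoint boundary, or one witness is a vertical annulus and $d_{AD}(H) \leq 1 \leq 2\extent(S) + 2\iota$, which is (3). Otherwise, choose a $\down$-labeled $t_0$ near $-1$ and a $\up$-labeled $t_1 > t_0$ with every regular value in $(t_0,t_1)$ unlabeled, let $A$ below $H$ realize the label at $t_0$ and $B$ above $H$ realize it at $t_1$. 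Passing a single non-exceptional critical value changes $S \cap H_t$ by an isotopy and one saddle, so successive systems $c(t)$ share a curve up to distance $1$ in the curve graph of $H$; passing the exceptional value costs at most $2\iota$ by the adaptedness condition (A); and by the previous paragraph only the bounded family of isotopy classes occurs strictly between $t_0$ and $t_1$. Splicing these together produces a path in the curve graph of $H$ from $\boundary A \cap H$ to $\boundary B \cap H$, and a careful count of the classes involved gives $d_{AD}(H) \leq d(\boundary A \cap H, \boundary B \cap H) \leq 2\extent(S) + 2\iota$.

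I expect the main obstacle to be twofold: (a) making the near-spine analysis precise and confirming that its only failure mode is conclusion (2) --- the bookkeeping for vertical annuli coming from collars of $\boundary S$, and for ghost arcs in the spine, is finicky; and (b) getting the combinatorial count to land exactly on $2\extent(S) + 2\iota$, which requires treating curves that are essential in $H_t$ but peripheral or compressing in $S$ separately and absorbing the contributions of the two endpoint witnesses. Everything else --- transversality, the saddle-move analysis of the graphic, and the reduction using (A) --- is routine but must be carried out with care about the exceptional critical value.
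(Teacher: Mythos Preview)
Your overall strategy matches the paper's: label intervals of regular values with $\down$ or $\up$ according to whether an essential curve of $S\cap H_t$ bounds an sc-disc or vertical annulus on the appropriate side, check that the extreme intervals carry $\down$ and $\up$, locate a maximal unlabeled run between them, show that on that run every curve essential in $H_t$ is also essential in $S$, and thread a path in the curve complex of $H$ across it. The near-spine analysis and the ``both labels at once'' case are handled just as you describe (the paper simply records $d_{AD}(H)\le 1$ in that case rather than invoking sc-strong irreducibility).

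The genuine gap is your counting step. You write that ``since the curves of $S\cap H_t$ are pairwise disjoint in $S$, only a bounded number of isotopy classes, controlled by $2\extent(S)$ \dots, can occur at unlabeled levels,'' and use this to bound the path length. But disjointness in $S$ holds only at a \emph{fixed} level; curves at different levels need not be disjoint in $S$, and curves isotopic in $S$ may project to non-isotopic curves on $H$. So there is no bound on the number of $H$-isotopy classes visited, and hence no bound on the path length, from this argument. This is not just the bookkeeping issue you flag as obstacle~(b); the mechanism itself is wrong.

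What the paper does instead is count \emph{saddles}. Having chosen one essential curve $\alpha_i$ per unlabeled interval, a critical value is declared \emph{useful} if $\alpha_i$ is involved in its saddle and no new curve bounds an unpunctured disc in $H$. The path changes only at useful saddles and possibly the exceptional value, so its length is at most $N$ plus an $\iota$-term, where $N$ is the number of useful saddles. The bound on $N$ comes from Euler characteristic: at each non-exceptional useful saddle the piece of $S$ in the corresponding slab $\phi^{-1}([t_i,t_{i+1}])$ is a pair of pants, none of whose boundary curves bounds an unpunctured disc in $S$ (by the useful-saddle condition together with the fact that essential-in-$H$ implies essential-in-$S$ on the unlabeled run). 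These pairs of pants are disjoint in $S$ and every complementary piece has $-\chi+|\cdot\cap\tau'|\ge 0$, forcing $N$ to be at most $2\extent(S)$ plus an $\iota$-term. Combining the two estimates gives conclusion~(3). This pair-of-pants Euler-characteristic argument is the missing idea.
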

\begin{proof}[Proof sketch]
Let $-1 = s_0 < s_1 < \hdots < s_n = 1$ be the critical values of $\phi|_S$. For $i \in \{0, \hdots, n-1\}$, let $I_i = [s_i, s_{i+1}]$ and let $\inter{I}_i$ be its interior. Let $\gamma_t = S \cap H_t$ and $\pi(\gamma_t) = \pi_t(\gamma_t)$ for all $t \in (-1,1)$.

We begin by analyzing the relationship between nearby regular values. It will turn out that regular values corresponding to saddle points are the most important. 

If $t, t' \in \inter{I}_i$, then curves $H_t \cap S$ and $H_{t'} \cap S$ have isotopic projections to $H$. For such $t, t'$, the each curve of $S \cap H_t$ bounds an annulus in $S$ with a curve of $S \cap H_{t'}$ and the image of this annulus under $\phi$ is the closed interval between $t$ and $t'$. 

Suppose that $t \in \inter{I}_i$ and $t' \in \inter{I}_{i+1}$ and that $s_i$ is not the exceptional critical value.  Let $p_i = \phi|_S^{-1}(s_i)$. The critical point $p_i$ is either a center or a saddle point. If it is a center, then $\pi(\gamma_{t'})$ is obtained from $\pi(\gamma_t)$ by an isotopy and either including or removing a component that is inessential on $H$. If it is a saddle point, then $\pi(\gamma_{t'})$ is obtained from $\pi(\gamma_t)$ by attaching a band with core $\kappa$ and then performing an isotopy. If the endpoints of $\kappa$ lie on the same component of $\pi(\gamma_t)$, then they approach that component from the same side, as both $S$ and $H$ are orientable. Consider the curves of $\pi(\gamma_t)$ incident to the endpoints of $\kappa$ as the ``old components'' and the curves of $\pi(\gamma_{t'})$ incident to the cocore of the band (dual to $\kappa$) as the ``new components.'' Observe that since $H$ is orientable, the curves $\pi(\gamma_{t'})$ can be isotoped to be disjoint from the curves $\pi(\gamma_t)$. If there are two old components, then there is one new component and vice-versa.  If there are two old components and both are essential in $H$, then the new component is also essential unless those components bound either an annulus or a once-punctured in annulus in $H$ containing $\kappa$. If there is one old component $\sigma$ and if at least one new component of $\pi(\gamma_{t'})$ is inessential in $H$, then there is a subarc of $\sigma$ such that $\kappa \cup \sigma$ bounds an unpunctured disc or once-punctured disc in $H$. In any case, in $S$, the components of $\gamma_t$ whose projections are the old components in $H$ and the components of $\gamma_{t'}$ whose projections are the new components in $H$, cobound a pair of pants in $S$. Each non-old curve of $\gamma_t$ bounds an annulus in $S$ with a non-new curve of $\gamma_{t'}$ and vice versa. 

Finally, suppose that $t \in \inter{I}_i$ and $t' \in \inter{I}_{i+1}$ and that $s_i$ is the exceptional critical value. Let $p_i$ and $p'_i$ be the critical points having critical value $s_i$. If one or both of $p_i$ or $p'_i$ is a center, then the analysis is as in the previous case. Suppose that both $p_i$ and $p'_i$ are saddle points. To obtain $\pi(\gamma_{t'})$ from $\pi(\gamma_{t})$, there are now two disjoint bands $\kappa_a$ and $\kappa_b$ in $H$ having endpoints on $\pi(\gamma_t)$ and interiors disjoint from $\pi(\gamma_t)$. The curves $\pi(\gamma_{t'})$ are obtained by attaching both bands and then an isotopy. By (A), the curves $\pi(\gamma_{t'})$ can be isotoped in $H$ to be disjoint from the curves $\pi(\gamma_t)$. 

For each $i \in \{0, \hdots, n-1\}$, choose a regular value $t_i \in I_i$. Label the interval $I_i$ with $\up$ (resp. $\down$) if, for $t \in \inter{I}_i$, if there is a curve of $S \cap H_t$ which is essential in $H_t$ which bounds an sc-disc or vertical annulus above (resp. below) $H_t$.  If some interval $I_i$ has both labels $\down$ and $\up$ or if $I_i$ and $I_{i+1}$  have opposite labels, then the projections of the curves giving rise to the labels show that $d_{AD}(H) \leq 1$. Thus no interval has both labels and no two adjacent intervals have opposite labels.

Let $G_\pm = \phi^{-1}(\pm 1)$ be the spines above and below $H$, determined by the sweepout $\phi$. They are each transverse to $S$.  If either one is disjoint from $S$ we have our first conclusion. Assume that neither is. Since $\phi^{-1}([0,t_{0}])$ is a small regular neighborhood of $G_-$, each curve (and there is at least one) of $\gamma_{t_0}$ is essential in $H$.  Let $\gamma \cpt \gamma_{t_0}$. The curve $\gamma$ either bounds a vertical annulus in the c-trivial tangle below $H_t$ with a component of $\boundary S$ or $\gamma$ is a meridian of an edge of $G_-$. In which case, it bounds a c-disc in $S$ that is below $H_t$. Thus,  $I_0$ is labelled $\down$. Similarly,  $I_{n-1}$ is labelled $\up$. It follows that there exist indices $k < \ell$, such that for each $k \leq i < \ell$, the interval $I_i$ is unlabelled, the interval $I_{k-1}$ has label $\down$ and the interval $I_\ell$ has label $\up$.  

For convenience, we rechoose the sweepout $\phi$ as folllows. Out of all sweepouts of $M'$ arising from $H$ and adapted to $S$ and isotopic to our original $\phi$, choose $\phi$ so that the set of critical points with critical values $\{s_k, \hdots, s_\ell\}$ (as above) contains as few saddles as possible. We note that an isotopy of $S$ can be reinterpreted as an isotopy of $\phi$. 

\textbf{Claim 1:} For $i \in \{k, \hdots, \ell-1\}$, each curve $\gamma$ of $\gamma_{t_i}$ that is essential in $H_{t_i}$  is also essential in $S$.

\emph{Proof of Claim 1:}  Suppose, to the contrary, that some $\gamma \cpt \gamma_{t_i}$ is essential in $H_{t_i}$ but inessential in $S$. It bounds an unpunctured or once-punctured disc in $S$.  Out of all unpunctured discs or once-punctured discs (not necessarily contained in $S$) with boundary $\gamma$, let $E$ be one which is transverse to $H_{t_i}$ and minimizes the number of components of intersection between the interior of $E$ and $H_{t_i}$. Since $I_i$ is unlabelled and $\gamma_i$ is essential in $H_{t_i}$, the disc $E$ must intersect $H_{t_i}$ in its interior. Let $\zeta$ be an innermost such curve. It must be inessential in $H_{t_i}$ and so, since $(M, \tau')$ is prime, we can isotope $E$ so as to remove $\zeta$, contradicting our choice of $E$. Thus, $\gamma$ is essential in $S$. \qed (Claim 1)

Assume that for all $i \in \{k, \hdots, \ell-1\}$ there exists a curve of $\gamma_{t_i}$ that is essential in $H_{t_i}$, for if this is not the case then Conclusion (1) holds. From each $H_{t_i}$ we can then choose a curve of $H_{t_i} \cap S$  essential in $S$ such that,  after eliminating adjacent repetitions, the isotopy classes of the projections of these curves give a path $\alpha$ between the annulus/disc sets of $H$. 

Let $W_i = \phi^{-1}([s_i, s_{i+1}])$. Recall that $W_i$ is homeomorphic to $H \times I$ and that $\kappa \cap W_i$  consists of vertical tangles. Say that a critical value $s_i \in \{s_k, \hdots, s_\ell\}$ is a \defn{useful saddle value} if one of the critical points of $\phi|_S$ with height $s_i$ is a saddle point involved with $\alpha_i$ and if for every such saddle point no new curve involved with the saddle point bounds an unpunctured disc in $H_{t_{i+1}}$. Let $N$ be the number of useful saddle values.

\textbf{Claim 2:} $d_{AD}(H) \leq N + 1$.

Suppose that $\pi(\alpha_i)$ and $\pi(\alpha_{i+1})$ are not isotopic in $H$. By construction, no curve of $\gamma_{t_{i+1}}$ has its projection to $H$ isotopic to $\pi(\alpha_i)$. Thus, in $\phi|_S^{-1}(s_{i+1})$ there is a saddle point $p$ such that $\alpha_i$ is involved with $p$. If $s_{i+1}$ is not an exceptional critical value then $s_{i+1}$ must be a useful saddle value, as otherwise there would be a curve of $\gamma_{t_{i+1}}$ whose projection to $H$ is isotopic to $\pi(\alpha_{i+1})$. Since there is at most one exceptional critical value, the length of $\alpha$ is at most $N+1$ and the result follows. \qed(Claim 2)

\textbf{Claim 3:} $N - 1\leq 2\extent(S)$.

Let $W_i = \phi^{-1}([t_i, t_{i+1}])$ for $i \in \{k, \hdots, \ell-1\}$. Consider a useful saddle value $s_i$ which is not an exceptional critical value. Let $S_i$ be the component of $S \cap W_i$ containing $\alpha_i$. It must be a pair-of-pants. By definition of useful saddle value, no component of $\boundary S_i$ bounds an unpunctured disc in $\boundary W_i$. Thus, if a component of $\boundary S_i$ is inessential in $S$, it is either $\boundary$-parallel or bounds a once-punctured disc. Let $S'$ be the union of all such $S_i$. Note that $-\chi(S')$ is either equal to $N$ or $N-1$ (depending on whether or not the exceptional critical value exists and is a useful saddle point). Let $S'' \cpt S \setminus S'$. It is not an unpunctured disc or sphere or once-punctured sphere, so $-\chi(S'') + |S'' \cap \tau'| \geq 0$. Thus, $N - 1 \leq -\chi(S) + |S \cap \tau'|$.\qed(Claim 3)

Consequently,
\[
d_{AD}(H) \leq 2\extent(S) + 2\iota,
\]
where $\iota = 0$ if there is no exceptional critical value and $\iota = 1$ if there is.
\end{proof}

We now resume the proof of Theorem \ref{biggest one}. 

\begin{proposition}\label{thin surface isotopy}
There is an isotopy of $\mc{J}$ taking $\mc{J}^-$ into $\mc{H}^-$.
\end{proposition}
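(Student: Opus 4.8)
We plan to run a sweepout argument modeled on \cite{JT} and \cite{BZ}, treating the thick spheres of $\mc{H}$ as high-distance bridge surfaces against which the thin spheres of $\mc{J}$ are straightened. First we assemble the standing facts. By Proposition \ref{props}, every component of $\mc{H}^-$ is c-essential, every $H\cpt\mc{H}^+$ is sc-strongly irreducible and unperturbed, and every component of $\mc{J}^-$ is a c-essential sphere; since $(M,\tau)$ is prime and irreducible, no sphere of $\mc{H}^-\cup\mc{J}^-\cup\boundary M$ has two or fewer punctures, since such a sphere would be essential. Hence the negative boundary of each c-trivial tangle of $(M,\tau)\setminus\mc{H}$ has no such sphere, so Lemma \ref{comparing distances} and the remark following the bridge-distance definition give $d_{AD}(H)\ge d_c(H)-2=d_{sc}(H)-2\ge\delta-2>2\tr(T,\lambda)-2$ for every $H\cpt\mc{H}^+$. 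Because $\mc{J}$ realizes $\netextent_{-2}(M,\tau)$ or $\width_{-2}(M,\tau)$, Lemma \ref{thin not parallel} lets us assume, after an isotopy, that no two components of $\mc{J}^-\cup\boundary M$ are $\tau$-parallel: otherwise the surface that lemma produces would strictly lower both $\netextent$ and $\width$. Finally we isotope $\mc{J}$ to minimize $|\mc{J}^-\cap\mc{H}|$; a routine innermost-disc argument, using that $\mc{J}^-$ and $\mc{H}^-$ are c-incompressible and that $(M,\tau)$ contains no once-punctured or essential twice-punctured sphere, makes every curve of $\mc{J}^-\cap\mc{H}^-$ essential on $\mc{H}^-$.

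The heart of the argument is to show $\mc{J}^-\cap\mc{H}^+=\nil$. Suppose $H\cpt\mc{H}^+$ is met by $\mc{J}^-$, and let $(M_H,\tau_H)$ be the component of $(M,\tau)\setminus\mc{H}^-$ containing $H$, so that $\boundary M_H\subset\mc{H}^-\cup\boundary M$ is c-incompressible and $H$ is an sc-strongly irreducible c-bridge surface for $(M_H,\tau_H)$ with at least six punctures. We fix a sweepout of $(M_H,\tau_H)$ by $H$, adapt it to the pieces of $\mc{J}^-\cap M_H$ in the sense of the definition preceding Proposition \ref{prop: distance bd}, and apply that proposition to each such piece $S$ (which is not an unpunctured sphere, and whose boundary, lying on $\mc{H}^-$, is essential by the previous paragraph). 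Conclusion (2) is excluded since $\boundary S\ne\nil$ lies on spheres with at least four punctures while a trivial ball tangle meets $\boundary M_H$ in at most two punctures. If conclusion (3) held, then $d_{AD}(H)\le 2\extent(S)+2$, forcing $\extent(S)>\tr(T,\lambda)-2$; ruling this out is where we would bound the complexity of the planar piece $S$ — its extent, equivalently its number of boundary curves on $\mc{H}^-$ — in terms of $\netextent(\mc{J})$ or $\width(\mc{J})$, using inequalities of the form $\extent(S)\le\netextent(\mc{J})\le\netextent(\mc{H})$ (and the analogous one for width) together with the bound on $|\mc{J}^-\cap\mc{H}^-|$ coming from minimality, and then play this against $\delta>2\tr(T,\lambda)$. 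Thus conclusion (1) must hold: some level $H_t$ meets $S$ only in curves inessential in $H_t$, so primeness lets us isotope $S$ off $H_t$; carrying $H_t$ back to $H$ then pushes $S$, and after doing this for all pieces and all thick spheres, all of $\mc{J}^-$, off $\mc{H}^+$.

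With $\mc{J}^-\cap\mc{H}^+=\nil$, each component $S\cpt\mc{J}^-$ lies in a single component $P$ of $M\setminus\mc{H}^+$, which is a union of c-trivial tangles glued along components of $\mc{H}^-$. Since $S$ is incompressible and not $\boundary$-parallel, cutting $P$ along $\mc{H}^-\cap P$ and using that each c-trivial tangle, after $\boundary$-reduction, is a union of product tangles and elementary ball tangles shows that $S$ is $\tau$-parallel to a component of $\mc{H}^-$; the assumption that no two components of $\mc{J}^-\cup\boundary M$ are $\tau$-parallel then makes distinct components of $\mc{J}^-$ land on distinct components of $\mc{H}^-$. Pushing each component of $\mc{J}^-$ through the corresponding product region yields the desired isotopy of $\mc{J}$ carrying $\mc{J}^-$ into $\mc{H}^-$.

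The main obstacle is the exclusion of conclusion (3) of Proposition \ref{prop: distance bd}: one must control $\extent(S)$ — or the number of curves of $\mc{J}^-\cap\mc{H}^-$ — purely from the invariants of $\mc{J}$, and then beat it with the distance threshold $\delta>2\tr(T,\lambda)$. We expect this forces a careful choice of $\mc{J}$ within its equivalence class and among the realizers of the minimum (also minimizing $|\mc{J}\cap\mc{H}|$), and an induction that processes the components of $\mc{J}^-$ in order of increasing complexity, so that straightening one thin sphere does not reintroduce intersections with another.
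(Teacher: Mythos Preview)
Your approach is essentially the paper's: minimize $|\mc{J}^-\cap\mc{H}^-|$ so that all intersection curves are essential in both surfaces, then in each component $(M',\tau')$ of $(M,\tau)\setminus\mc{H}^-$ apply Proposition~\ref{prop: distance bd} to a piece $S$ of $\mc{J}^-\setminus\mc{H}^-$ against the unique thick sphere $H\cpt\mc{H}^+$ in $M'$. You also correctly isolate the crux as ruling out conclusion~(3) by bounding $\extent(S)$. But you then leave this as an unresolved ``main obstacle'' and speculate about inductions and further minimizations of $|\mc{J}\cap\mc{H}|$. None of that is needed; the missing observation is short and purely combinatorial.

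Let $F'\cpt\mc{J}^-$ be the thin sphere containing the piece $S$. Because every curve of $F'\cap\mc{H}^-$ is essential in $F'$, no piece of $F'\setminus\mc{H}^-$ is an unpunctured or once-punctured disc, so each piece has nonnegative extent; since extent is additive under cutting a closed surface along curves, $\extent(S)\le\extent(F')$. Now the width-tree axiom (Definition~\ref{digraph term}(2)) says the label on the edge corresponding to $F'$ is at most the label on any adjacent vertex, i.e.\ $\extent(F')\le\extent(J)$ for some $J\cpt\mc{J}^+$; hence $\extent(S)\le\tr(\mc{J})$. Lemma~\ref{relations} then gives $\tr(\mc{J})\le\netextent(\mc{J})$ and $\tr(\mc{J})\le\sqrt{\width(\mc{J})/2}$, and the minimality hypothesis on $\mc{J}$ bounds these by the corresponding quantities for $\mc{H}$, which is what gets played against $\delta$. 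This is the entire content of the step you flagged; the bound on $\extent(S)$ comes from the thin sphere $F'$ it sits in, not from any global count of intersection curves.

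With conclusion~(3) excluded and~(2) ruled out as you argue, conclusion~(1) lets you isotope $S$ (rel $\boundary S$) off $H$ into one of the two c-trivial tangles of $(M',\tau')\setminus H$; there, c-incompressibility forces $S$ to be $\tau$-parallel into $\mc{H}^-\cup\boundary M$, and the minimality of $|\mc{J}^-\cap\mc{H}^-|$ then forces $\boundary S=\nil$, so $S=F'$ is a full thin sphere of $\mc{J}$ already parallel to a component of $\mc{H}^-$. This handles every piece simultaneously; the inductive reprocessing you anticipate is unnecessary, and your intermediate goal of first emptying $\mc{J}^-\cap\mc{H}^+$ is subsumed in this single step.
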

\begin{proof}
By Theorem \ref{thm: slim main}, each component of $\mc{H}^-$ is c-incompressible and each component with two or fewer punctures is essential. Since $\lambda$ is positive, no component of $\mc{H}^-$ is a sphere with two or fewer punctures. Since both $\mc{H}^-$ and $\mc{J}^-$ are c-incompressible and $(M, \tau)$ is prime, we may isotope $\mc{J}$ so that all curves of $\mc{J}^-\cap \mc{H}^-$ are essential in both surfaces. Subject to that requirement, perform the isotopy so as to minimize $|\mc{J}^- \cap \mc{H}^-|$. Thus, no component of $\mc{J}^- \cap (M \setminus \mc{H}^-)$ with boundary is $\tau$-parallel to subsurface of $\mc{H}^-$. Let $F \cpt \mc{J}^- \setminus \mc{H}^-$ and let $(M', \tau') \cpt (M,\tau)\setminus \mc{H}^-$ be the component containing $F$. Let $F' \cpt \mc{J}^-$ be the component containing $F$. Since each component of $\mc{J}^- \cap \mc{H}^-$ is essential in both surfaces, $\extent(F) \leq \extent(F')$.  Let $H = \mc{H}^+ \cap M'$. Since $(T, \lambda)$ is slim and $(M,\tau)$ is prime and irreducible, $d_c(H) = d_{sc}(H) \geq 2$; in particular $H$ is sc-strongly irreducible.

Let $\phi$ be a sweepout of $(M', \tau')$ corresponding to $H$. We may perturb $\phi$ so that $\phi|_S$ is a Morse function with all critical points at distinct heights. (In particular, there is no exceptional critical value.) Hence, $\phi$ is adapted to $F$.  Let $t$ be a regular value. If $\gamma \cpt H_t \cap F$ is inessential in $H_t$, then it must also be inessential in $F$ as $F$ is c-incompressible. Consequently, if there exists a regular value $t$ such that each curve of $H_t \cap F$ is inessential in $H_t$, then $F$ can be isotoped, relative to its boundary, to be disjoint from $H$. Since it is then a c-incompressible surface in a c-trivial tangle, it must be $\tau$-parallel to a component of $\boundary M \cup \mc{H}^-$. In such a case, the surface $F$ must be closed and so $F$ is isotopic to a component of $\mc{H}^-$. Henceforth, assume there does not exist a regular value $t$ such that each curve of $H_t \cap F$ is inessential in $H_t$.

Since $F$ is c-incompressible and not an inessential sphere with two or fewer punctures, $F$ is not contained in a trivial ball tangle in $(M',\tau')$. Thus, by Proposition \ref{prop: distance bd} $d_{AD}(H) \leq 2\extent(S)$. By Lemma \ref{comparing distances}, $d_c(H) \leq d_{AD}(H) + 2 \leq 2\extent(F) + 2$. Let $J \cpt \mc{H}^+$ cobound a c-trivial tangle with $F'$. Then 
\[
d_c(H) \leq 2\extent(F) + 2 \leq 2\extent(J) + 2 \leq 2\tr(\mc{J}) + 2.
\]
By Lemma \ref{relations}, we have $\tr(\mc{J}) \leq \netextent(\mc{J})$ and also $\tr(\mc{J}) \leq \sqrt{\width(\mc{J})/2}$. Thus, if $\netextent(\mc{J}) = \netextent(M,\tau)$, we have
\[
d_c(H) \leq 2\netextent(\mc{H}) + 2,
\]
which contradicts our hypotheses. If, on the other hand, $\width(\mc{J}) = \width_{-2}(M,\tau)$, then 
\[
d_c(H) \leq 2\sqrt{\width(\mc{H})/2} + 2 \leq 2\netextent(\mc{H}) + 2,
\]
another contradiction.

Thus, each component of $\mc{J}^-$ must be isotopic to a component of $\mc{H}^- \cup \boundary M$. Since no two components of $\mc{J}$ are $\tau$-parallel and no component is $\boundary$-parallel, $\mc{J}$ can be isotoped so that $\mc{J}^- \subset \mc{H}^-$.
\end{proof}

Henceforth, we assume that $\mc{J}$ has been isotoped so that $\mc{J}^- \subset \mc{H}^-$ and we do not further isotope $\mc{J}^-$. We now turn to thick surfaces, where the analysis is harder. We use the ``graphic'' technology of spanning and splitting due to Johnson \cite{Johnson}. The analysis here is a combination of \cite{JT} and \cite{MHL1}. See those papers for more details.  By Lemma \ref{thin not parallel}, no two components of $\mc{J}$ are $\tau$-parallel. 

\begin{lemma}\label{Claim 1}
 Suppose that $S \cpt \mc{J}^+$ and that $F \cpt \mc{H}^- \setminus \mc{J}^-$. If $S$ and $F$ lie in the same component of $M \setminus \mc{J}^-$, then $S$ cannot be isotoped to be disjoint from $F$.
\end{lemma}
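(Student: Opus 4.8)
The plan is to argue by contradiction. Suppose $S$ can be isotoped to be disjoint from $F$; throughout we keep the already-positioned surface $\mc{J}^-\subset\mc{H}^-$ fixed. After the isotopy, $F$ lies in the component $W$ of $M\setminus\mc{J}^-$ containing $S$, is disjoint from $S$, and hence lies in one of the two c-trivial tangles into which $S$ cuts $W$; call it $(C,\tau_C)$, so that $\boundary_+ C=S$ and $\boundary_- C\subset\mc{J}^-\cup\boundary M$.

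First I would show that $F$ is $\tau$-parallel to a component $G$ of $\boundary_- C$. By Theorem \ref{thm: slim main} (through Proposition \ref{props}) the surface $F$ is c-incompressible, and since $(T,\lambda)$ is positive, $|F\cap\tau|\geq 4$, so $F$ is not an inessential $2$--sphere. Fix a complete collection $\Delta$ of sc-discs for $\boundary_+ C$ exhibiting the c-triviality of $(C,\tau_C)$. Since $(M,\tau)$ is prime and irreducible, an innermost-circle argument removes every circle of $F\cap\Delta$ inessential on $F$; a surviving circle would be essential on $F$ and bound an sc-disc for $F$, contradicting c-incompressibility. Thus $F$ may be isotoped off $\Delta$ and so lies in one of the product tangles or elementary ball tangles of $(C,\tau_C)\setminus\Delta$. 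The puncture count excludes the ball tangles, and in a product tangle a closed c-incompressible surface is $\tau$-parallel to a level sphere; since $\Delta$ is disjoint from $\boundary_- C$, that level sphere is a component $G$ of $\boundary_- C$ once the $\Delta$--cut is undone.

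Next I would derive a contradiction from $F$ being $\tau$-parallel to $G$. We have $G\cpt\boundary_- C\subset\mc{J}^-\cup\boundary M\subset\mc{H}^-\cup\boundary M$, and $G\neq F$ since $F\cpt\mc{H}^-\setminus\mc{J}^-$ is not a component of $\mc{J}^-\cup\boundary M$. Let $R$ be the product region between $F$ and $G$; using c-incompressibility of $\mc{H}^-$ and Waldhausen's theorem I would choose $F$ innermost so that $\inter{R}$ is disjoint from $\mc{H}^-\cup\boundary M$, whence it contains exactly one component $H$ of $\mc{H}^+$ and no component of $\mc{H}^-$. Since $(R,\tau\cap R)$ is a product tangle, $H$ is a $2$--sphere in its interior, and the component of $(M,\tau)\setminus\mc{H}$ between $F$ and $H$ is a c-trivial tangle $(C',\tau')$ with $F\cpt\boundary_- C'$ and $\boundary_+ C'=H$. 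If $H$ separates $F$ from $G$ in $R$, then $H$ is $\tau$-parallel to $F$, so $(C',\tau')$ is a product tangle with $\boundary_- C'=F\cpt\mc{H}^-$ and the edge of $T$ corresponding to $F$ is a product edge at $H$, contradicting that $(T,\lambda)$ is productless (Proposition \ref{props}). Otherwise $H$ bounds a ball $B\subset R$ with $F,G\notin B$; then $B$ is a c-trivial tangle of $(M,\tau)\setminus\mc{H}$ with empty negative boundary, hence an elementary ball tangle, forcing $|H\cap\tau|\leq 2$ and contradicting that every component of $\mc{H}^+$ has at least six punctures. Either way, $S$ cannot be isotoped off $F$.

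I expect the first step to be the main obstacle: showing that a c-incompressible closed surface with at least four punctures inside a c-trivial tangle is necessarily $\tau$-parallel to a component of $\boundary_- C$. The delicate points are handling cut discs and ghost arcs when pushing $F$ off $\Delta$ (a cut disc of $\Delta$ may be punctured by an arc that is a ghost arc of $(C,\tau_C)$), and then matching the level sphere produced in a product piece of $(C,\tau_C)\setminus\Delta$ with an actual component of $\boundary_- C$. The concluding argument is more robust but still relies on the bookkeeping that $\inter{R}$ contains exactly one thick surface of $\mc{H}$ and no thin surface, which again uses c-incompressibility of $\mc{H}^-$ together with productlessness and positivity.
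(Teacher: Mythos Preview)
Your first step is exactly what the paper does: once $S$ and $F$ are disjoint, $F$ sits inside a c-trivial tangle $(C,\tau_C)$ on one side of $S$, and a c-incompressible punctured sphere there must be $\tau$-parallel to a component of $\boundary_- C\subset\mc{J}^-\cup\boundary M$. The paper's proof is a single sentence that stops here, asserting that $F$ is ``not parallel to a component of $\mc{J}^-$'' and declaring a contradiction; you go on to try to justify that assertion in your second step. That is a reasonable thing to attempt, but your Step~2 contains a genuine error.

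In sub-case (b) you claim that if $H$ fails to separate $F$ from $G$, then the ball $B$ it bounds is a c-trivial tangle with $\boundary_- B=\varnothing$ and is \emph{therefore an elementary ball tangle}, forcing $|H\cap\tau|\le 2$. This is false: a c-trivial tangle with empty negative boundary is simply a trivial tangle---a $3$--ball containing an arbitrary number of bridge arcs---and need not be elementary. So no puncture bound follows. This case is precisely the ghost-arc situation analysed in Lemma~\ref{thin not parallel}: the other side of $H$ has $\boundary_-=\{F,G\}$ and carries a ghost arc, and ruling it out requires more than productlessness alone. In sub-case (a) your conclusion ``$H$ separates $F$ from $G$, hence $H$ is $\tau$-parallel to $F$'' is correct but not automatic; you should note that with $\boundary_-$ a single sphere on each side there are no ghost arcs, so $H$ is an honest bridge sphere for the product tangle $(R,\tau\cap R)$, and then invoke \cite{HS2} together with the fact (from Proposition~\ref{props}) that $H$ is unperturbed to conclude $H$ is $\tau$-parallel to $F$.

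In short, your Step~2 is essentially rediscovering Lemma~\ref{thin not parallel} for $\mc{H}$, and you should appeal to that lemma (and to \cite{HS2} for the separating case) rather than the incorrect ``elementary ball tangle'' claim.
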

\begin{proof}
Under these circumstances, if $F$ and $S$ could be isotoped within $M'$ to be disjoint, then $F$ could be isotoped to lie in one of the c-trivial tangles on either side of $S$, but this would contradict the fact that $F$ is c-incompressible and not parallel to a component of $\mc{J}^-$.
\end{proof}

\begin{lemma}\label{Claim 2}
 We have $\mc{H}^- = \mc{J}^-$. Also, if $S \cpt \mc{J}^+$ and $H \cpt \mc{H}^+$ are in the same component of $(M,\tau)\setminus \mc{H}^-$, then either $S$ is $\tau$-parallel to $H$ in that component or $\extent(S) > \extent(H)$.
 \end{lemma}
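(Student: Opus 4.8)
The plan is to compare $\mc{H}$ and $\mc{J}$ one piece at a time using sweepouts and Proposition~\ref{prop: distance bd}, with the distance threshold supplying all the leverage. Since $\mc{H}$ is associated to $(T,\lambda,\delta)$ with $\delta>2\tr(T,\lambda)$, every $H\cpt\mc{H}^+$ has $d_{sc}(H)=d_c(H)\geq\delta$ (the equality because $\boundary M$ has no sphere with two or fewer punctures), so by Lemma~\ref{comparing distances}, $d_{AD}(H)\geq\delta-2>2\tr(T,\lambda)-2\geq 2\extent(H')-2$ for every $H'\cpt\mc{H}^+$; as distances are integers, any sweepout bound of the form $d_{AD}(H)\leq 2\extent(\cdot)$ then forces $\extent(\cdot)\geq\tr(T,\lambda)$. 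Throughout I would use: each component of $\mc{H}^-$ is c-incompressible and each $H\cpt\mc{H}^+$ is sc-strongly irreducible and unperturbed (Theorem~\ref{thm: slim main}); that $\mc{J}^-\subseteq\mc{H}^-$ after the isotopy of Proposition~\ref{thin surface isotopy}, which we do not undo; that no two components of $\mc{J}$ are $\tau$-parallel (Lemma~\ref{thin not parallel}, since $\mc{J}$ realizes $\netextent_{-2}(M,\tau)$ or $\width_{-2}(M,\tau)$); and that we may still isotope $\mc{J}^+$, which I would do to make every curve of $\mc{J}^+\cap\mc{H}^-$ essential in $\mc{H}^-$ and to minimize $|\mc{J}^+\cap\mc{H}^-|$ subject to that (a standard innermost-curve argument using sc-strong irreducibility of $\mc{J}^+$ and c-incompressibility of $\mc{H}^-$).

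To prove $\mc{H}^-=\mc{J}^-$, suppose $F\cpt\mc{H}^-\setminus\mc{J}^-$. Then $F$ lies in the interior of a component $(M_0,\tau_0)$ of $(M,\tau)\setminus\mc{J}^-$; let $S\cpt\mc{J}^+$ be the unique thick surface of $\mc{J}$ in $(M_0,\tau_0)$, and choose a component $(M',\tau')\subseteq(M_0,\tau_0)$ of $(M,\tau)\setminus\mc{H}^-$ having $F$ as a negative boundary component, with thick surface $H\cpt\mc{H}^+$. By Lemma~\ref{Claim 1}, $S$ cannot be isotoped off $F$, so after the above minimization some component $S_0$ of $S\cap M'$ has boundary on $F$ that is essential in $F$; in particular $S_0$ is not a closed surface and, since $\boundary S_0$ meets $F\subset\boundary M'$, is not contained in a trivial ball tangle of $(M',\tau')$. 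Take a generic sweepout of $(M',\tau')$ by $H$ (no exceptional critical value, hence adapted to $S_0$). In Proposition~\ref{prop: distance bd} applied to $H$ and $S_0$, conclusion (1) is impossible because the boundary curves of $S_0$ on $F$ project to essential curves on $H$ at every level, and conclusion (2) is impossible as noted; so $d_{AD}(H)\leq 2\extent(S_0)$, whence $\extent(S)\geq\extent(S_0)\geq\tr(T,\lambda)=\tr(\mc{H})$. The plan is to run this over all extra thin surfaces of $\mc{H}$, use the tree structure to control how many components of $\mc{J}^+$ can carry such large extent, and combine with Lemma~\ref{relations} ($\tr(\mc{J})\leq\netextent(\mc{J})$, resp. $\tr(\mc{J})\leq\sqrt{\width(\mc{J})/2}$) and $\netextent(\mc{J})=\netextent_{-2}(M,\tau)\leq\netextent(\mc{H})$ (resp.\ $\width(\mc{J})=\width_{-2}(M,\tau)\leq\width(\mc{H})$) to reach a contradiction; hence $\mc{H}^-=\mc{J}^-$.

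For the second assertion, now that $\mc{H}^-=\mc{J}^-$, each component $(M',\tau')$ of $(M,\tau)\setminus\mc{H}^-$ contains exactly one $H\cpt\mc{H}^+$ and one $S\cpt\mc{J}^+$, and (after the isotopy of $\mc{J}^+$) $S$ is a closed surface in the interior of $(M',\tau')$. If $\extent(S)<\extent(H)$, then the bound $d_{AD}(H)\leq 2\extent(S)\leq 2\extent(H)-2$ from Proposition~\ref{prop: distance bd}(3) contradicts $d_{AD}(H)>2\extent(H)-2$; conclusion (2), and conclusion (1) after pushing $S$ across innermost discs, would put $S$ inside a c-trivial tangle of $(M',\tau')$ or of $(M',\tau')\setminus H$, forcing $S$ to bound a trivial ball tangle on one side, which I would rule out as making $S$ a removable/perturbed component of $\mc{J}$ (contradicting slimness and minimality of $\mc{J}$). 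So $\extent(S)\geq\extent(H)$. To handle the equality case and obtain ``$S$ is $\tau$-parallel to $H$,'' I would pass to the two-parameter comparison of a sweepout by $H$ and a sweepout by $S$ and invoke Johnson's spanning/splitting dichotomy for the Rubinstein--Scharlemann graphic: a splitting produces $d_{AD}(H)\leq 2\extent(S)+2\iota$ which, together with $\extent(S)=\extent(H)$, is incompatible with $d_{AD}(H)\geq\delta-2$ once $\delta>2\tr(T,\lambda)$ is used sharply, so a spanning occurs and then $S$ and $H$ can be isotoped to coincide (they cobound a product region in $(M',\tau')$, using that no thin surface separates them and that $\mc{J}$ is unperturbed); that product isotopy, assembled over all components, also gives the final sentence that $\mc{J}$ is $\tau$-parallel to $\mc{H}$.

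The main obstacle I expect is the bookkeeping that closes the contradictions: ruling out conclusion (3) of Proposition~\ref{prop: distance bd} in the equality case $\extent(S)=\extent(H)$ (where the crude bound $d_{AD}(H)\leq 2\extent(S)+2\iota$ only constrains, and one needs either a sharper splitting estimate for punctured spheres or the genuine two-parameter argument to force spanning), and the counting step in the proof of $\mc{H}^-=\mc{J}^-$ that turns the local lower bounds $\extent(S)\geq\tr(\mc{H})$ into a global contradiction with minimality of $\mc{J}$. Secondary care is needed to verify that a thick sphere of a slim multiple c-bridge surface cannot bound a trivial ball tangle, and to dispose of the low-complexity cases (four-punctured thick spheres), which are handled as in the preceding lemma.
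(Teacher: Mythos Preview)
Your plan separates the two conclusions and tries to prove $\mc{H}^-=\mc{J}^-$ first with a one-parameter sweepout of a piece $S_0=S\cap M'$; this is where the argument breaks. The claim that conclusion~(1) of Proposition~\ref{prop: distance bd} ``is impossible because the boundary curves of $S_0$ on $F$ project to essential curves on $H$ at every level'' is not correct: $\boundary S_0\subset F\subset\phi^{-1}(\pm1)$ lies in a spine, so $\boundary S_0$ never appears in $H_t\cap S_0$ for $t\in(-1,1)$, and there is nothing preventing some $H_t$ from meeting $S_0$ only in curves inessential in $H_t$. Even granting conclusion~(3), you only obtain $\extent(S)\geq\tr(\mc{H})$ for one component $S\cpt\mc{J}^+$, and the promised ``counting step'' that converts this into a contradiction with $\netextent(\mc{J})\leq\netextent(\mc{H})$ or $\width(\mc{J})\leq\width(\mc{H})$ is not supplied; I do not see a clean way to make it work, since a single large-extent thick sphere of $\mc{J}$ is entirely compatible with those global inequalities.

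The paper avoids both problems by running the two-parameter graphic from the outset, comparing a sweepout of $U\cpt(M,\tau)\setminus\mc{H}^-$ by $H$ against a sweepout of $W\cpt(M,\tau)\setminus\mc{J}^-$ by $S$. The spanning/splitting dichotomy then does all the work simultaneously: in the splitting case one gets the distance bound $d_c(H)\leq 2\extent(S)+4$, a contradiction; in the spanning case the ``mostly above/mostly below'' analysis shows $S_{s_0}$ is disjoint from $\boundary W$, so Lemma~\ref{Claim 1} forces $U=W$ (this is exactly how $\mc{H}^-=\mc{J}^-$ is established, with no counting needed), and then the Johnson--Tomova argument gives $\extent(S)\geq\extent(H)$ with equality only when $S$ is $\tau$-parallel to $H$. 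You already anticipated invoking the graphic for the equality case; the fix is to use it from the start and let spanning deliver $\mc{H}^-=\mc{J}^-$ directly.
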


\begin{proof}
Let $S \cpt \mc{J}^+$. Let $(W, \tau_W) \cpt (M,\tau)\setminus \mc{J}^-$ contain $S$ and suppose $H \subset W$.  Let $(U, \tau_U) \cpt (W, \tau_W) \setminus \mc{H}^-$ contain $H$. Let $\phi_H$ be a sweepout of $U$ by $H$. Let $\phi_S$ be a sweepout of $W$ by $S$. Let $\Delta \co U \to U \times W$ be the diagonal map induced by inclusion. Consider the product map $\phi = (\phi_H \times \phi_S ) \circ \Delta \co U \times W \to I \times I$. A small perturbation of $\phi$ ensures that it is a stable map. The set $\Lambda \subset I \times I$ of critical values is a closed set (called the \defn{graphic}), which is an immersed collection of curves containing vertices of degree two and four. For a value $(t, s)$ in the interior of $I \times I$, the preimage $\phi^{-1}(t,s)$ is the intersection of the c-bridge surface $H_t$ of $U$ with the c-bridge surface $S_s$ of $W$. Let $\Gamma_\pm = \phi^{-1}(\pm 1)$. Recall that it is the union of components of $\mc{J}^- \cap \boundary W$ with a graph. Since $\mc{J}^- \subset \mc{H}^-$, for all $t$, $H_t \cup \boundary W$ intersects only the graph portion of $\Gamma_\pm$. 

For $(t,s)$ a regular value in the interior of the graphic. Say $H_t$ is \defn{mostly above} $S_s$ if the intersection of $H_t \cup \boundary W$ with the c-trivial tangle below $S_s$ is contained in the union of pairwise disjoint  unpunctured and once-punctured discs in $H_t \cup \boundary W$. Define $H_t$ to be \defn{mostly below} $S_s$ in a similar fashion. Let $Q_a$ (resp. $Q_b$) be the set of points of the graphic such that $H_t$ is mostly above (resp. mostly below) $S_s$. Note that if $(t, s)$ and $(t', s')$ are in the same component of $I \times I \setminus \Lambda$, then $(t, s)$ is in $Q_a$ (resp. $Q_b$) if and only if $(t', s')$ is also. 

Fix some $t$ in the interior of $I$. When $s$ is very close to $-1$, $H_t \cup \boundary W$ intersects the spine of $\Gamma_-$ in discs that are meridian discs for some of the edges of $\Gamma_-$. On $H_t$, these discs are regular neighborhoods of the points of intersection between the edges of $\Gamma_-$ and $H_t$. Thus, for values of $s$ very close to $-1$, $(t,s) \in Q_a$. Similarly, for values of $s$ very close to $+1$, $(t, s) \in Q_b$. Thus, there exists a $s_0 \in (-1,1)$ such that one of the following holds:
\begin{itemize}
\item (Splitting) For each $(t, s_0)$, $(t, s_0)$ is neither in $Q_a$ nor in $Q_b$. Furthermore, if $y = s_0$ passes through a vertex of the graphic then the interior of the region above the vertex lies in $Q_b$ and the interior of the region lying below the vertex lies in $Q_a$.
\item (Spanning) There exists $t_- < t_+ \in I$ such that $(t_-, s_0) \in Q_a$ and $(t_+, s_0) \in Q_b$ or vice versa.
\end{itemize}

Assume we are in the Splitting case. Let $\psi \co U \to I$ be the sweepout $\phi(\cdot, s_0)$ corresponding to $H$. We claim that it is adapted to $S' = S_{s_0} \cap U$. Since $\phi$ is stable, the function $\psi' = \psi|_{S'}$ is Morse. The line $y = s_0$ passes through at most one vertex of the graphic. Observe that $\psi'$ has an exceptional critical value if and only if it passes through a valence 4 vertex of the graphic. Using the fact that $H$ is a sphere with more than four punctures, an argument nearly verbatim to that of \cite[Lemma 8]{MHL1} (which is itself modelled on \cite[Lemma 5.6]{RS}) shows that Condition (A) is satisfied. By Proposition \ref{prop: distance bd} and Lemma \ref{comparing distances}, 
\[
d_c(H) \leq d_{AD}(H) + 2 \leq 2\extent(S) + 4,
\]
contradicting our hypotheses, as in the proof of Proposition \ref{thin surface isotopy}.

Now assume that we are in the Spanning Case. We follow the argument of \cite[Theorem 3.1]{JT}. Color the c-trivial tangle below $S_{s_0}$ blue and the c-trivial tangle above $S_{s_0}$ red. Since $(t_-, s_0) \in Q_a$, the surface $H_{t_-}$ is mostly above $S_{s_0}$, each curve of $(H_{t_-} \cup \boundary W) \cap S_{s_0}$ is inessential in $(H_{t_-} \cup \boundary W)$. An innermost such component $\ell$ bounds a zero or once-punctured disc in $H_{t_-} \cup \boundary W$ that is blue. Similarly, each curve of $(H_{t_+} \cup \boundary W) \cap S_{s_0}$ is inessential in $(H_{t_-} \cup \boundary W)$ and an innermost such component bounds a red zero or once-punctured disc in $H_{t_-} \cup \boundary W$. Since a disc cannot be both blue and red, the surface $S_{s_0}$ is disjoint from $\boundary W$. By Claim 1, this implies that $U = W$. The argument \emph{verbatim} from \cite[Theorem 3.1]{JT}, shows that there is a sequence of c-compressions and isotopies of $S$ which produces a surface with a component isotopic to $H$. In particular, $\extent(S) \geq \extent(H)$ with equality if and only if $S$ and $H$ are $\tau$-parallel in $U = W$. 

Applying this argument to each $S \cpt \mc{J}^+$ proves the claim.
\end{proof}

Let $(T', \lambda')$ be the width tree associated to $\mc{J}$ and $(T, \lambda)$ the tree associated to $(M,\tau)$. By Lemma \ref{Claim 2}, $\mc{H}^- = \mc{J}^-$. Thus, the unoriented trees underlying $T$ and $T'$ are isomorphic. Again, by Lemma \ref{Claim 2}, for each vertex $v$ of $T$ (equivalently, of $T'$), we have $\lambda'(v) \geq \lambda(v)$. If equality holds, then the corresponding thick surfaces are $\tau$-parallel. Thus, $\netextent(\mc{J}) \geq \netextent(\mc{H})$ and $\width(\mc{J}) \geq \width(\mc{H})$ and if equality holds for either, then $\mc{J}$ and $\mc{H}$ are $\tau$-parallel. By hypothesis, $\mc{J}$ realizes $\netextent_{-2}(M,\tau)$ or $\width_{-2}(M,\tau)$. Consequently, $\mc{J}$ is $\tau$-parallel to $\mc{H}$. Therefore, as an undirected graph $T'$ is isomorphic to $T$ by an isomorphism $\phi$ that preserves labels. We desire to show that the orientations of $T$ and $T'$ are the same or reversed. 

The set of thick surfaces of $\mc{J}$ bounding a component of $S^3 \setminus \mc{J}$ disjoint from $\mc{J}^-$ correspond exactly to the source and sinks of $T'$. Similarly, with the thick surfaces of $\mc{H}$ and the sources and sinks of $T$. Thus, the isotopy of $\mc{J}$ to $\mc{H}$ takes the sources and sinks of $T'$ to the sources and sinks of $T$.  Suppose that $e'_0$ and $e'_1$ are edges incident to a vertex $v'$ of $T'$. Let $e_i = \phi(e'_i)$ and $v = \phi(v')$ for $i = 1,2$. Let $F'_i \cpt \mc{J}^-$ and $J \cpt \mc{J}^+$ correspond to $e'_i$ and $v'$ for $i = 0,1$. Let $F_i \cpt \mc{H}^+$ and $H \cpt \mc{H}$ correspond to $e_i$ and $v$. The isotopy takes $F'_i$ to $F_i$ and $J$ to $H$. One of $e'_0$ and $e'_1$ is incoming to $v'$ and the other outgoing if and only if $F'_0$ and $F'_1$ are on opposite sides of $J$. Likewise, one of $e_0$ and $e_1$ is incoming to $v$ and the other outgoing if and only if $F_0$ and $F_1$ are on opposite sides of $H$. The isotopy of $\mc{J}$ to $\mc{H}$ preserves the separation properties of its surfaces. Thus, one of $e'_0$ and $e'_1$ is incoming to $v'$ and the other outgoing if and only if one of $e_0$ and $e_1$ is incoming to $v$ and the other outgoing. It follows that $\phi$ either preserves all edge orientations or reverses all edge orientations.

\newpage
 \begin{bibdiv}
    \begin{biblist}

\bib{BS}{article}{
   author={Bachman, David},
   author={Schleimer, Saul},
   title={Distance and bridge position},
   journal={Pacific J. Math.},
   volume={219},
   date={2005},
   number={2},
   pages={221--235},
   issn={0030-8730},
   review={\MR{2175113}},
   doi={10.2140/pjm.2005.219.221},
}

\bib{BO}{article}{
   author={Blair, Ryan},
   author={Ozawa, Makoto},
   title={Height, trunk and representativity of knots},
   journal={J. Math. Soc. Japan},
   volume={71},
   date={2019},
   number={4},
   pages={1105--1121},
   issn={0025-5645},
   review={\MR{4023298}},
   doi={10.2969/jmsj/80438043},
}

\bib{BTY}{article}{
   author={Blair, Ryan},
   author={Tomova, Maggy},
   author={Yoshizawa, Michael},
   title={High distance bridge surfaces},
   journal={Algebr. Geom. Topol.},
   volume={13},
   date={2013},
   number={5},
   pages={2925--2946},
   issn={1472-2747},
   review={\MR{3116308}},
   doi={10.2140/agt.2013.13.2925},
}

\bib{BT}{article}{
   author={Blair, Ryan},
   author={Tomova, Maggy},
   title={Width is not additive},
   journal={Geom. Topol.},
   volume={17},
   date={2013},
   number={1},
   pages={93--156},
   issn={1465-3060},
   review={\MR{3035325}},
   doi={10.2140/gt.2013.17.93},
}
\bib{BZ}{article}{
   author={Blair, Ryan},
   author={Zupan, Alexander},
   title={Knots with compressible thin levels},
   journal={Algebr. Geom. Topol.},
   volume={15},
   date={2015},
   number={3},
   pages={1691--1715},
   issn={1472-2747},
   review={\MR{3361148}},
   doi={10.2140/agt.2015.15.1691},
}
\bib{MHL1}{article}{
   author={Blair, Ryan},
   author={Campisi, Marion},
   author={Johnson, Jesse},
   author={Taylor, Scott A.},
   author={Tomova, Maggy},
   title={Exceptional and cosmetic surgeries on knots},
   journal={Math. Ann.},
   volume={367},
   date={2017},
   number={1-2},
   pages={581--622},
   issn={0025-5831},
   review={\MR{3606449}},
   doi={10.1007/s00208-016-1392-3},
}
\bib{MHL2}{article}{
   author={Blair, Ryan},
   author={Campisi, Marion},
   author={Johnson, Jesse},
   author={Taylor, Scott A.},
   author={Tomova, Maggy},
   title={Neighbors of knots in the Gordian graph},
   journal={Amer. Math. Monthly},
   volume={124},
   date={2017},
   number={1},
   pages={4--23},
   issn={0002-9890},
   review={\MR{3608680}},
   doi={10.4169/amer.math.monthly.124.1.4},
}

\bib{DZ}{article}{
   author={Davies, Derek},
   author={Zupan, Alexander},
   title={Natural properties of the trunk of a knot},
   journal={J. Knot Theory Ramifications},
   volume={26},
   date={2017},
   number={12},
   pages={1750080, 9},
   issn={0218-2165},
   review={\MR{3718281}},
   doi={10.1142/S0218216517500808},
}

\bib{dMPSS}{article}{
   author={de Mesmay, Arnaud},
   author={Purcell, Jessica},
   author={Schleimer, Saul},
   author={Sedgwick, Eric},
   title={On the tree-width of knot diagrams},
   journal={J. Comput. Geom.},
   volume={10},
   date={2019},
   number={1},
   pages={164--180},
   review={\MR{3957223}},
}

\bib{G3}{article}{
   author={Gabai, David},
   title={Foliations and the topology of $3$-manifolds. III},
   journal={J. Differential Geom.},
   volume={26},
   date={1987},
   number={3},
   pages={479--536},
   issn={0022-040X},
   review={\MR{910018}},
}

\bib{Harvey}{article}{
   author={Harvey, W. J.},
   title={Boundary structure of the modular group},
   conference={
      title={Riemann surfaces and related topics: Proceedings of the 1978
      Stony Brook Conference},
      address={State Univ. New York, Stony Brook, N.Y.},
      date={1978},
   },
   book={
      series={Ann. of Math. Stud.},
      volume={97},
      publisher={Princeton Univ. Press, Princeton, N.J.},
   },
   date={1981},
   pages={245--251},
   review={\MR{624817}},
}

\bib{HS}{article}{
   author={Hayashi, Chuichiro},
   author={Shimokawa, Koya},
   title={Thin position of a pair (3-manifold, 1-submanifold)},
   journal={Pacific J. Math.},
   volume={197},
   date={2001},
   number={2},
   pages={301--324},
   issn={0030-8730},
   review={\MR{1815259}},
   doi={10.2140/pjm.2001.197.301},
}

\bib{HS2}{article}{
   author={Hayashi, Chuichiro},
   author={Shimokawa, Koya},
   title={Heegaard splittings of trivial arcs in compression bodies},
   journal={J. Knot Theory Ramifications},
   volume={10},
   date={2001},
   number={1},
   pages={71--87},
   issn={0218-2165},
   review={\MR{1822141}},
   doi={10.1142/S021821650100069X},
}

\bib{HK}{article}{
   author={Heath, Daniel J.},
   author={Kobayashi, Tsuyoshi},
   title={Essential tangle decomposition from thin position of a link},
   journal={Pacific J. Math.},
   volume={179},
   date={1997},
   number={1},
   pages={101--117},
   issn={0030-8730},
   review={\MR{1452527}},
   doi={10.2140/pjm.1997.179.101},
}

\bib{Hempel}{article}{
   author={Hempel, John},
   title={3-manifolds as viewed from the curve complex},
   journal={Topology},
   volume={40},
   date={2001},
   number={3},
   pages={631--657},
   issn={0040-9383},
   review={\MR{1838999}},
   doi={10.1016/S0040-9383(00)00033-1},
}
\bib{IJK}{article}{
   author={Ido, Ayako},
   author={Jang, Yeonhee},
   author={Kobayashi, Tsuyoshi},
   title={Bridge splittings of links with distance exactly $n$},
   journal={Topology Appl.},
   volume={196},
   date={2015},
   number={part B},
   part={part B},
   pages={608--617},
   issn={0166-8641},
   review={\MR{3431001}},
   doi={10.1016/j.topol.2015.05.028},
}
\bib{IS}{article}{
   author={Ichihara, Kazuhiro},
   author={Saito, Toshio},
   title={Knots with arbitrarily high distance bridge decompositions},
   journal={Bull. Korean Math. Soc.},
   volume={50},
   date={2013},
   number={6},
   pages={1989--2000},
   issn={1015-8634},
   review={\MR{3149577}},
   doi={10.4134/BKMS.2013.50.6.1989},
}

\bib{Johnson}{article}{
   author={Johnson, Jesse},
   title={Bounding the stable genera of Heegaard splittings from below},
   journal={J. Topol.},
   volume={3},
   date={2010},
   number={3},
   pages={668--690},
   issn={1753-8416},
   review={\MR{2684516}},
   doi={10.1112/jtopol/jtq021},
}

\bib{JM}{article}{
   author={Johnson, Jesse},
   author={Moriah, Yoav},
   title={Bridge distance and plat projections},
   journal={Algebr. Geom. Topol.},
   volume={16},
   date={2016},
   number={6},
   pages={3361--3384},
   issn={1472-2747},
   review={\MR{3584261}},
   doi={10.2140/agt.2016.16.3361},
}
\bib{JT}{article}{
   author={Johnson, Jesse},
   author={Tomova, Maggy},
   title={Flipping bridge surfaces and bounds on the stable bridge number},
   journal={Algebr. Geom. Topol.},
   volume={11},
   date={2011},
   number={4},
   pages={1987--2005},
   issn={1472-2747},
   review={\MR{2826930}},
   doi={10.2140/agt.2011.11.1987},
}
\bib{Ozawa}{article}{
   author={Ozawa, Makoto},
   title={Waist and trunk of knots},
   journal={Geom. Dedicata},
   volume={149},
   date={2010},
   pages={85--94},
   issn={0046-5755},
   review={\MR{2737680}},
   doi={10.1007/s10711-010-9466-y},
}

\bib{RS}{article}{
   author={Rubinstein, Hyam},
   author={Scharlemann, Martin},
   title={Comparing Heegaard splittings of non-Haken $3$-manifolds},
   journal={Topology},
   volume={35},
   date={1996},
   number={4},
   pages={1005--1026},
   issn={0040-9383},
   review={\MR{1404921}},
   doi={10.1016/0040-9383(95)00055-0},
}

\bib{Saito}{article}{
   author={Saito, Toshio},
   title={Tunnel number of tangles and knots},
   journal={J. Math. Soc. Japan},
   volume={66},
   date={2014},
   number={4},
   pages={1303--1313},
   issn={0025-5645},
   review={\MR{3272600}},
   doi={10.2969/jmsj/06641303},
}

\bib{SS}{article}{
   author={Scharlemann, Martin},
   author={Schultens, Jennifer},
   title={3-manifolds with planar presentations and the width of satellite
   knots},
   journal={Trans. Amer. Math. Soc.},
   volume={358},
   date={2006},
   number={9},
   pages={3781--3805},
   issn={0002-9947},
   review={\MR{2218999}},
   doi={10.1090/S0002-9947-05-03767-0},
}

\bib{SSS}{book}{
   author={Scharlemann, Martin},
   author={Schultens, Jennifer},
   author={Saito, Toshio},
   title={Lecture notes on generalized Heegaard splittings},
   note={Three lectures on low-dimensional topology in Kyoto},
   publisher={World Scientific Publishing Co. Pte. Ltd., Hackensack, NJ},
   date={2016},
   pages={viii+130},
   isbn={978-981-3109-11-7},
   review={\MR{3585907}},
   doi={10.1142/10019},
}

\bib{ST-width}{article}{
   author={Scharlemann, Martin},
   author={Thompson, Abigail},
   title={On the additivity of knot width},
   conference={
      title={Proceedings of the Casson Fest},
   },
   book={
      series={Geom. Topol. Monogr.},
      volume={7},
      publisher={Geom. Topol. Publ., Coventry},
   },
   date={2004},
   pages={135--144},
   review={\MR{2172481}},
   doi={10.2140/gtm.2004.7.135},
}

\bib{ST}{article}{
   author={Scharlemann, Martin},
   author={Tomova, Maggy},
   title={Uniqueness of bridge surfaces for 2-bridge knots},
   journal={Math. Proc. Cambridge Philos. Soc.},
   volume={144},
   date={2008},
   number={3},
   pages={639--650},
   issn={0305-0041},
   review={\MR{2418708}},
   doi={10.1017/S0305004107000977},
}
\bib{Schubert}{article}{
   author={Schubert, Horst},
   title={\"{U}ber eine numerische Knoteninvariante},
   language={German},
   journal={Math. Z.},
   volume={61},
   date={1954},
   pages={245--288},
   issn={0025-5874},
   review={\MR{72483}},
   doi={10.1007/BF01181346},
}

\bib{Schultens}{article}{
   author={Schultens, Jennifer},
   title={The classification of Heegaard splittings for (compact orientable
   surface)$\,\times\, S^1$},
   journal={Proc. London Math. Soc. (3)},
   volume={67},
   date={1993},
   number={2},
   pages={425--448},
   issn={0024-6115},
   review={\MR{1226608}},
   doi={10.1112/plms/s3-67.2.425},
}

\bib{TT2}{article}{
   author={Taylor, Scott},
   author={Tomova, Maggy},
   title={Additive invariants for knots, links and graphs in 3-manifolds},
   journal={Geom. Topol.},
   volume={22},
   date={2018},
   number={6},
   pages={3235--3286},
   issn={1465-3060},
   review={\MR{3858764}},
   doi={10.2140/gt.2018.22.3235},
}
\bib{TT1}{article}{
   author={Taylor, Scott A.},
   author={Tomova, Maggy},
   title={Thin position for knots, links, and graphs in 3-manifolds},
   journal={Algebr. Geom. Topol.},
   volume={18},
   date={2018},
   number={3},
   pages={1361--1409},
   issn={1472-2747},
   review={\MR{3784008}},
   doi={10.2140/agt.2018.18.1361},
}
\bib{TT3}{article}{
   author={Taylor, Scott A.},
   author={Tomova, Maggy},
   title={Net extent and bridge number of satellite graphs},
   status ={In preparation} 
}

\bib{Thompson}{article}{
   author={Thompson, Abigail},
   title={Thin position and bridge number for knots in the $3$-sphere},
   journal={Topology},
   volume={36},
   date={1997},
   number={2},
   pages={505--507},
   issn={0040-9383},
   review={\MR{1415602}},
   doi={10.1016/0040-9383(96)00010-9},
}

\bib{Tomova}{article}{
   author={Tomova, Maggy},
   title={Multiple bridge surfaces restrict knot distance},
   journal={Algebr. Geom. Topol.},
   volume={7},
   date={2007},
   pages={957--1006},
   issn={1472-2747},
   review={\MR{2336246}},
   doi={10.2140/agt.2007.7.957},
}
\bib{Tomova2}{article}{
   author={Tomova, Maggy},
   title={Thin position for knots in a 3-manifold},
   journal={J. Lond. Math. Soc. (2)},
   volume={80},
   date={2009},
   number={1},
   pages={85--98},
   issn={0024-6107},
   review={\MR{2520379}},
   doi={10.1112/jlms/jdp009},
}
\bib{Waldhausen}{article}{
   author={Waldhausen, Friedhelm},
   title={On irreducible $3$-manifolds which are sufficiently large},
   journal={Ann. of Math. (2)},
   volume={87},
   date={1968},
   pages={56--88},
   issn={0003-486X},
   review={\MR{224099}},
   doi={10.2307/1970594},
}
\bib{West}{book}{
   author={West, Douglas B.},
   title={Introduction to graph theory},
   publisher={Prentice Hall, Inc., Upper Saddle River, NJ},
   date={1996},
   pages={xvi+512},
   isbn={0-13-227828-6},
   review={\MR{1367739}},
}
\end{biblist}
\end{bibdiv}
\end{document}